\newtheorem{prop}{Proposition}
\newtheorem{lemma}[prop]{Lemma}
\newtheorem{thm}[prop]{Theorem}
\newtheorem{cor}[prop]{Corollary}
\numberwithin{prop}{section}
\numberwithin{equation}{section}
\theoremstyle{definition}
\newtheorem{defn}[prop]{Definition}
\newtheorem{rmk}[prop]{Remark}
\def \k {K\"ahler }
\newcommand{\oo}[1]{\overline{#1}}
\newcommand{\hsp}{\hspace{0.5cm}}
\newcommand{\vp}{\varphi}
\newcommand{\del}{\partial}
\newcommand{\bdel}{\bar{\partial}}
\newcommand{\ga}{\alpha}
\newcommand{\gb}{\beta}
\newcommand{\gw}{\omega}
\newcommand{\ten}{\otimes}
\newcommand{\w}{\wedge}
\newcommand{\bC}{\mathbb{C}}
\newcommand{\bCn}{\mathbb{C}^n}
\newcommand{\dd}{\partial}
\newcommand{\dbar}{\oo\partial}
\newcommand{\modt}{\equiv_2}
\newcommand{\nmodt}{\not\equiv_2}
\newcommand{\eps}{\varepsilon}
\newcommand{\delb}{\overline{\del}}
\DeclareMathOperator{\supp}{supp}
\DeclareMathOperator{\Ric}{Ric}
\DeclareMathOperator{\Rm}{Rm}
\DeclareMathOperator{\Real}{Re}
\DeclareMathOperator{\dist}{dist}
\begin{document}

\title[Asymptotic expansion of Bergman kernel]{Asymptotic expansion of the Bergman kernel via perturbation of the Bargmann-Fock model}

\begin{abstract}
We give an alternate proof of the existence of the asymptotic expansion of the Bergman kernel associated to the $k$th tensor powers of a positive line bundle $L$ in a $\frac{1}{\sqrt{k}}$-neighborhood of the diagonal using elementary methods.  We use the observation that after rescaling the K\"ahler potential $k\vp$ in a $\frac{1}{\sqrt{k}}$-neighborhood of a given point, the potential becomes an asymptotic perturbation of the Bargmann-Fock metric.  We then prove that the Bergman kernel is also an asymptotic perturbation of the Bargmann-Fock Bergman kernel.
\end{abstract}

\date{\today}

\author [Hezari]{Hamid Hezari}

\email{\href{mailto:hezari@uci.edu}{hezari@uci.edu}}

\author [Kelleher]{Casey Kelleher}

\email{\href{mailto:clkelleh@uci.edu}{clkelleh@uci.edu}}

\author [Seto]{Shoo Seto}

\email{\href{mailto:shoos@uci.edu}{shoos@uci.edu}}

\author [Xu]{Hang Xu}

\email{\href{mailto:hangx@uci.edu}{hangx@uci.edu}}

\address{Rowland Hall\\
         University of California\\
         Irvine, CA 92697}

\maketitle
\section{Introduction}

Let $(L,h) \to (M^{n},\omega)$ be a positive Hermitian holomorphic line bundle over a compact complex manifold. The hermitian metric $h$ induces a \k form $\omega$ on $M$
\begin{equation*}
\gw := -\tfrac{\sqrt{-1}}{2\pi} \del \bdel \log(h).
\end{equation*}
Let $H^0(M,L)$ denote the space of holomorphic global sections of $L$, which is a closed subspace of $\mathcal{L}^2(M,L)$, the space of all square integrable sections of $L$ over $M$. The $\mathcal{L}^2$-inner product of $f,g \in H^0(M,L)$ is defined by
\begin{equation*}
\langle  f,g \rangle_{\mathcal{L}^2} :=  \int_M (f,g)_h \tfrac{\omega^n}{n!}.
\end{equation*}
The \emph{Bergman projection} is the orthogonal projection $\mathcal{P}_{H^0}: \mathcal{L}^{2}(M,L) \to H^0(M,L)$.
The \emph{Bergman kernel} $K$, a section of $L \ten \bar{L}$, is the operator kernel of $\mathcal{P}_{H^0}$ with respect to the above inner product.

The Bergman kernel has the following \emph{(global) reproducing property}: given a holomorphic section $f \in H^0(M,L)$ we have
\begin{equation*}
f(x) = \left\langle f(y),K(y,x) \right\rangle_{\mathcal{L}^2}.
\end{equation*}
Given $x_0 \in M$, a sufficiently small neighborhood $U_{x_0}$ admits a local trivialization of $L$ with frame $e_L$ on $U_{x_0}$. We define the local \k potential $\varphi$ by
\begin{equation*}
h(e_L,e_L) = e^{-\varphi}.
\end{equation*}
\emph{B\"ochner coordinates} are special coordinates in which the local \k potential $\vp$ admits the form
\begin{equation*}
\vp(z) = |z|^2 + R(z), \ R(z) =  O(|z|^4).
\end{equation*}

There always exists a neighborhood of $p$ which admits B\"ochner coordinates. Note the above definitions are naturally extended to $ (L^{\otimes k},h^k)$ and we denote the corresponding Bergman Kernel by $K_k$. In this setting, the frame of $L^{\ten k}$ on $U_{x_0}$ is given by $e_L^{\ten k}$, the $k$-tensor product of the frame $e_L$. We shall consider only $K_k$ for the remainder of the paper and will henceforth drop the $k$ subscript.

The purpose of this paper is to provide an alternate proof to the following theorem.

\begin{thm}[\cite{zelditch}, \cite{catlin}, \cite{shiffzeld}]\label{mainresult}
The scaled Bergman kernel admits the following asymptotic expansion in the B\"ochner coordinates and in the frame $e^{\otimes k}_L(x) \otimes e_L^{\otimes k}(\oo y)$,
\begin{equation*}
K \left( \tfrac{u}{\sqrt{k}}, \tfrac{v}{\sqrt{k}}\right) \sim k^n e^{u\cdot \oo v}
\left( 1 + \sum_{j=2}^{\infty}\tfrac{c_j(u,\bar{v})}{\sqrt{k^j}} \right), \ \ \ \ |u|, |v| \leq 1.
\end{equation*}
More precisely, for any $N$, the following inequality holds:
\begin{equation*}
\left\| K \left( \tfrac{u}{\sqrt{k}}, \tfrac{v}{\sqrt{k}}\right) - k^n e^{u\cdot \oo v}
\left( 1 + \sum_{j=2}^{N}\tfrac{c_j(u,\bar{v})}{\sqrt{k^j}} \right) \right\|_{C^m} \leq C_{N,m}k^{n-\frac{N+1-m}{2}},
\end{equation*}
where the $C^m$ norm corresponds to the $x_0,u,v$ variables, and with respect to a fixed B\"ochner coordinate in a smooth family of B\"ochner coordinates centered at $x_0$.

\noindent Here each $c_j(u,\bar{v})$ is a polynomial of the form $\sum_{p,q}c_j^{p,q}(x_0)u^p\bar{v}^q$ satisfying
\begin{equation*}
\begin{cases}
c_j^{p,q}(x_0) = 0 &\text{ for } p+q > 2j,\\
c_j^{p,q}(x_0) = 0 &\text{ for } p+q \neq j \mod 2.
\end{cases}
\end{equation*}
In particular, by setting $u=v=0$, this verifies the on-diagonal expansions of Zelditch \cite{zelditch} and Catlin \cite{catlin}.
\end{thm}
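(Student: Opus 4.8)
The plan is to realize the Bergman kernel as a perturbation of the Bargmann--Fock kernel, exactly as advertised: rescale near $x_0$, construct the series $1+\sum_jk^{-j/2}c_j$ formally by enforcing the reproducing property against the rescaled weight, read off the polynomial structure of the $c_j$, and finally prove the formal object is genuinely asymptotic to $K$. \emph{Rescaling.} Fix $x_0$, work in Bochner coordinates, and substitute $z=u/\sqrt k$. Since $\vp(z)=|z|^2+R(z)$ with $R=O(|z|^4)$, Taylor expanding $R$ to order $N+3$ gives $\psi_k(u):=k\,\vp(u/\sqrt k)=|u|^2+\sum_{l=2}^{N+1}k^{-l/2}Q_l(u)+O(k^{-(N+2)/2})$ with $Q_l$ a real polynomial of degree $l+2$; after the same rescaling $\gw^n/n!$ becomes $\pi^{-n}e^{-\psi_k}$ times $1+\sum_{l\ge1}k^{-l/2}(\cdots)$, and the Jacobian of $z\mapsto u$ produces the prefactor $k^n$. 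On any fixed ball $|u|\le C$ this is an $O(k^{-1})$ perturbation of the Bargmann--Fock data $(\bCn,\pi^{-n}e^{-|u|^2}\,dV)$, whose Bergman kernel is the explicit model $\mathcal K(u,\oo v)=e^{u\cdot\oo v}$.

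\emph{Formal construction and polynomial structure.} Seek $\tilde K(u,\oo v)=k^ne^{u\cdot\oo v}\bigl(1+\sum_{j=2}^{N}k^{-j/2}c_j(u,\oo v)\bigr)$, holomorphic in $u$ and anti-holomorphic in $\oo v$, and impose the rescaled reproducing identity: for every holomorphic $f$, $\langle f,\tilde K(\cdot,\oo u)\rangle_k=f(u)+O(k^{-(N+1)/2})$, where $\langle\cdot,\cdot\rangle_k$ is the rescaled $\mathcal{L}^2$ pairing. Expanding $e^{-\psi_k}=e^{-|v|^2}\bigl(1+\sum_{l\ge2}k^{-l/2}b_l(v)\bigr)$ with polynomials $b_l$, and using the exact Bargmann--Fock reproducing identity together with the Gaussian moment formulas $\pi^{-n}\int_{\bCn}\oo{v}^{\gb}g(v)\,e^{u\cdot\oo v-|v|^2}\,dV(v)=(\del_u^{\gb}g)(u)$, one matches powers of $k^{-1/2}$: the vanishing of the $k^{-j/2}$ coefficient is a linear equation that determines $c_j$ uniquely as a polynomial in $u,\oo v$, with source built from $b_2,\dots,b_j$ and $c_2,\dots,c_{j-1}$. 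Since the perturbation starts at $l=2$ one gets $c_0=1$, $c_1=0$, matching the range $j\ge2$. The structural claims $c_j^{p,q}(x_0)=0$ for $p+q>2j$ and for $p+q\not\equiv j\pmod{2}$ then follow by induction on $j$: each $b_l$ has degree $l+2\equiv l\pmod{2}$ and integration against $e^{-|v|^2}$ drops total degree by an even amount, so tracking degrees and parities through the recursion yields both the cap $\deg c_j\le2j$ and the parity constraint (equivalently, apply the scaling $u\mapsto tu$, $v\mapsto tv$, $k\mapsto t^{-2}k$ to the identity); smoothness in $x_0$ is inherited from the smooth dependence of a family of Bochner coordinates.

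\emph{The comparison estimate.} It remains to show $\|K(u/\sqrt k,v/\sqrt k)-\tilde K(u,\oo v)\|_{C^m}\le C_{N,m}k^{n-(N+1-m)/2}$. Multiplying $\tilde K(\cdot,\oo u)$ by a cutoff $\chi$ supported in a fixed ball about $x_0$, the section $\chi\tilde K(\cdot,\oo u)$ is --- using the Gaussian concentration $|\tilde K(w,\oo u)|^2e^{-\psi_k(w)}\sim k^{2n}e^{|u|^2}e^{-|w-u|^2}$ --- exponentially close in $\mathcal{L}^2$ to an honest holomorphic section, since $\bar\partial(\chi\tilde K(\cdot,\oo u))=\tilde K(\cdot,\oo u)\,\bar\partial\chi$ is supported where $\tilde K$ is exponentially small; correcting this requires only that $\bar\partial$ be boundedly solvable on $L^{\ten k}$ for large $k$ (Hörmander's $\mathcal{L}^2$ estimate via positivity of $L$, or the elementary off-diagonal Gaussian decay of $K$ it implies). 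This turns the global reproducing property of $K$ into the rescaled approximate identity above, up to errors of the claimed size. Then, since $\tilde K(\cdot,\oo u)-K(\cdot,\oo u)$ is holomorphic and lies in the relevant Bergman space, it is, by the reproducing property of $K$, precisely the Riesz representative of the error functional $f\mapsto\langle f,\tilde K(\cdot,\oo u)\rangle_k-f(u)$; hence its $\mathcal{L}^2$ norm equals the norm of that functional, which is controlled by the previous step. Finally, because the difference is holomorphic (modulo the exponentially small cutoff correction), the subaveraging/Cauchy estimates upgrade the $\mathcal{L}^2$ bound to $C^m$ --- the $\sqrt k$-rescaling producing the $+m/2$ loss in the exponent --- and differentiating the construction in $u$, $v$, $x_0$ gives the stated norm.

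The main obstacle is the reduction from $M$ to the Bargmann--Fock model inside the last step: this is the one place where genuine positivity of $L$ must enter, through Hörmander's $\bar\partial$-estimate or equivalently the off-diagonal decay of $K$, and keeping it elementary --- ideally replacing it by Gaussian estimates on the model together with a Neumann-series argument for the Bergman projection --- is the heart of the matter. Once one is reduced to $\bCn$ with the weight $\psi_k$, everything becomes explicit Gaussian calculus and the recursion of the middle step is pure bookkeeping.
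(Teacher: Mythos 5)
Your overall architecture coincides with the paper's: a formal perturbative construction of the kernel as $k^ne^{u\cdot\oo v}(1+\sum_j k^{-j/2}c_j)$ determined recursively by imposing the reproducing property against the Gaussian moment identity $\int_{\bCn}\oo v^pv^qe^{u\cdot\oo v-|v|^2}dV=\frac{q!}{(q-p)!}u^{q-p}$, followed by a H\"ormander $\delb$-argument to pass from the local candidate to the global kernel, and Cauchy/B\"ochner--Martinelli estimates for the $C^m$ statement. However, there is a concrete gap in your comparison step: you take the cutoff $\chi$ supported in a \emph{fixed} ball about $x_0$, and this is precisely where the argument breaks. The formal identity obtained by matching powers of $k^{-1/2}$ involves the truncated expansion of $e^{-kR(v/\sqrt k)}\Omega(v/\sqrt k)$, and replacing that truncation by the actual weight with an $O(k^{-(N+1)/2})$ error requires $kR(v/\sqrt k)=O(k^{-\eps})$, i.e. $|v|\leq k^{1/4-\eps}$ in the rescaled variable. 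On a fixed ball ($|v|\lesssim\delta\sqrt k$) the quantity $kR(v/\sqrt k)$ can be of order $k$, the truncated series for the weight is wildly inaccurate, and --- equally fatally --- the norms $\|f\|_{\mathcal L^2(k\vp)}$ and $\|f\|_{\mathcal L^2(k|z|^2)}$ are no longer comparable, so your Cauchy--Schwarz bounds would control the error by the wrong norm (off by a factor as large as $e^{ck}$). The paper resolves this with a cutoff of \emph{shrinking} support $B(k^{-1/4-\eps})$, which forces three separate estimates you have not accounted for: the Taylor remainders of $e^{-kR}$ and of $\Omega$ (Lemmas \ref{remainofexp} and \ref{remofdet}), and an estimate on the region outside the shrinking support where the Gaussian comparability fails, which the paper obtains by repeated integration by parts against $\dbar$ of the phase (Lemma \ref{estoutdisk}) rather than by a direct pointwise bound. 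Without the shrinking support, "everything becomes explicit Gaussian calculus" is not an accurate description of this step.

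A secondary but real issue: the degree bound $c_j^{p,q}=0$ for $|p|+|q|>2j$ does not follow from a naive degree count through the recursion, because the moment formula $\oo v^pv^q\mapsto\frac{q!}{(q-p)!}u^{q-p}$ lowers degree in a way that mixes with the unknown coefficients at each stage. The paper needs a separate inductive argument (Theorem \ref{pq2mcsubmpq0}) built on alternating-binomial-sum identities and a generating-function device $\Psi_l$ to isolate $c_\tau^{p,l}$; your proposed scaling $u\mapsto tu$, $v\mapsto tv$, $k\mapsto t^{-2}k$ does yield the parity constraint but not, by itself, the degree cap. These are fixable, and the rest of your outline (the $\delb$-correction $g_{k,v}$ being exponentially small because $\dbar\chi$ meets only the Gaussian tail of the candidate kernel, the uniform bound $\mathcal B\leq Ck^n$, and the Cauchy-estimate upgrade to $C^m$ with the $+m/2$ loss) matches the paper's Sections \ref{localtoglobal} and onward.
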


\noindent The primary objective of this paper is to provide a direct proof of Theorem \ref{mainresult}. The uniqueness of our approach is that it uses straightforward computational techniques and requires little advanced construction. The only advanced tool we use is H\"ormander $\mathcal{L}^2$-estimates.

Methods to compute and analyze the coefficients of the Bergman Kernel have been worked out over the last twenty five years. Initially Tian proved leading asymptotics on the diagonal using the method of peak sections \cite{tian}. A complete expansion was given independently by Zelditch \cite{zelditch} and Catlin \cite{catlin} by using Boutet de Monvel and Sj\"{o}strand parametrix \cite{boutetdemonvel}. The near-diagonal expansion of $K(x,y)$ (i.e. $\dist(x, y) < 1/\sqrt{k})$ is deduced from the diagonal expansion by Taylor expansion (see \cite{shiffzeld} for the more general case of symplectic manifolds). The near-diagonal asymptotic expansions are of the form (with $b_{l}(x,\oo y)$ certain hermitian functions; $b_l(x,\oo y) = \oo{b_l(y,\oo x)}$),
\begin{equation}\label{neardiagonalversion2}
K(x,y) = k^n e^{k \psi(x,\oo y)} \left( 1+ \sum_{l = 1}^{\infty} \frac{b_{l}(x,\oo y)}{k^{l}} \right), \ \dist(x,y) < \frac{1}{\sqrt{k}},
\end{equation}
where in the real analytic case, $\psi(x,\oo y)$ is the polarization of $\vp$, i.e. $\psi(x, \oo y)$ is holomorphic in $x$ and $\oo y$, and $\psi(x,\oo y)|_{y =x} = \varphi(x)$. It is clear that this expansion implies the one in Theorem \ref{mainresult} by Taylor expansion. Lu demonstrated that the functions $b_l(x,\oo x)$ are polynomials of covariant derivatives of the curvature of the underlying manifold $M$ and computed the first four terms \cite{lu}. The off-diagonal terms $c_j(u,\oo v)$ were studied by Lu and Shiffman \cite{lushiff} using Taylor expansions of $b_l(x,\oo x)$. Berman, Berndtsson, and Sj\"ostrand \cite{berman} gave an alternate approach to prove \eqref{neardiagonalversion2}, which we will discuss further in the following section.

\subsection{Sketch of proof.}

Our proof of Theorem \ref{mainresult} is subdivided into two components: construction and analysis of a local reproducing kernel using perturbation methods on the reproducing kernel of the Bargmann-Fock space (\S \ref{refkerbCn}, \S \ref{localestimates}), and demonstration of this construction in fact being (asymptotically) the global reproducing kernel (\S \ref{localtoglobal}). As in \cite{berman}, initially the asymptotic formula is formally computed and then remainder estimates are addressed. However, our local method of computation is distinct; while \cite{berman} gives an approach using microlocal analysis techniques inspired by the calculus of Fourier integral operators with complex phase developed by Sj\"ostrand, we use standard integration and combinatorial identities on $\mathbb{C}^n$ to calculate the coefficients that would satisfy the reproducing property. The proof relating the local reproducing kernel to the global Bergman Kernel is essentially the same as \cite{berman} and is included for completeness. We also explicitly compute the first few coefficients of the expansion in \S \ref{compofcoeff}.
We define the \emph{local reproducing kernel modulo $k^{-(N+1)/2}$} on $U_{x_0}$ to be a function $Q_N(x,y)$ on $U_{x_0} \times U_{x_0}$ which is holomorphic in $x$, antiholomorphic in $y$, and which satisfies the following \emph{local reproducing property} (modulo $k^{-(N+1)/2}$)
\begin{equation*}
f(x) = \left\langle \chi_k(y)f(y), Q_N(y,x) \right\rangle_{\mathcal{L}^2(U_x,k\varphi)} + O \left(k^{n-\tfrac{N+1}{2}}\right) \|f\|_{\mathcal{L}^2(U_x,k\varphi)}, \ f \in H^0(U_{x_0}),
\end{equation*}
where $\chi_k$ is a cutoff function supported in a scale ball of radius $k^{-1/4-\epsilon}$ for some $\eps > 0$ (see \eqref{cutofffunc} for the precise definition). The choice of such an unusual cutoff function with shrinking support plays an important role in our argument.

To construct the local reproducing kernel modulo $N$ at a point $x_0 \in M$, we begin by choosing B\"ochner coordinates and a local trivialization of the bundle. Our cutoff function $\chi_k$ is chosen to have shrinking support on $B(k^{-1/4 -\epsilon})$, so that the inner product is localized near the diagonal and also to ensure that the local rescaled K\"ahler potential admits an asymptotic expansion of the form
\begin{equation*}
 \vp \left( \tfrac{v}{\sqrt{k}} \right) := \left| \tfrac{v}{\sqrt{k}} \right|^2 \left( 1 + \sum_{j=2}^{\infty} \tfrac{a_j(v, \bar{v})}{\sqrt{k}^{j}} \right), \ k \to \infty.
\end{equation*}
This expression is an asymptotic perturbation of $|\frac{v}{\sqrt{k}}|^2$, hence we propose that the local Bergman kernel admits an asymptotic expansion of the form
\begin{equation}\label{eq:BKexpan}
K^{loc} \left( \tfrac{u}{\sqrt{k}}, \tfrac{v}{\sqrt{k}} \right) = k^n e^{u \bar{v}} \left( 1 + \sum_{j=2}^{\infty}\tfrac{c_j(u,\bar{v})}{\sqrt{k}^j} \right),
\end{equation}
where the $c_j$'s depends on $a_j$ and satisfy $c_j(u,\bar{v}) = \overline{c_j(v,\bar{u})}$. The reason we propose such an expansion is that if $\phi(x)=|x|^2$, then $a_j=0$ for all $j \geq 2$ and hence $c_j = 0$ for all $j \geq 2$ yielding $K^{loc} ( \tfrac{u}{\sqrt{k}}, \tfrac{v}{\sqrt{k}})  = k^n e^{u \cdot \oo v}$, which is precisely the rescale of the kernel of Bargmann-Fock metric $k|x|^2$ (cf. \S \ref{repkerC}).

In \S \ref{refkerbCn} we show the existence of the coefficients which will (formally) satisfy the reproducing property on $\mathbb{C}^n$ under the perturbed metric. In \S \ref{localestimates} we show that the local kernel proposed satisfies the local reproducing property through a series of remainder estimates.
In \S \ref{localtoglobal} we show the global Bergman kernel admits an asymptotic expansion by comparing it to our local construction. The crux of the proof is the application of standard H\"ormander $\delb$-estimates as it was done before in \cite{berman}.

With the proof of Theorem \ref{mainresult} complete, in \S \ref{compofcoeff} we then use the algorithm generated within the argument to compute the coefficient  $c_2$ (the coefficients $c_0$ and $c_1$ are computed in Lemmas \ref{csub0is0} and \ref{csub1is0} respectively), yielding the following result.

\begin{prop}[Coefficients of the expansion] One has
\begin{equation*}
\begin{cases}
c_0 &=1, \\
c_1 &= 0, \\
c_2 &= \frac{\rho}{2} - \frac{1}{4}\sum_{i,j,k,l}\Rm_{i\oo j k \oo l}(0)u^i u^k\oo v^j \oo v^l,
\end{cases}
\end{equation*}
where $\Rm$ denotes the Riemannian curvature tensor and $\rho$ denotes the scalar curvature (cf. \eqref{curvatureconv}). In particular, for $u=v=0$ one obtains $c_2 = \frac{\rho}{2}$.
\end{prop}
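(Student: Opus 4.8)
The plan is to run the algorithm developed in Sections \ref{refkerbCn}--\ref{localestimates} through order $j=2$. First I would record the Taylor expansion of the local \k potential in B\"ochner coordinates: since $\vp(z) = |z|^2 + R(z)$ with $R(z) = O(|z|^4)$ and the quartic term of $R$ is governed by the curvature, one has $R(z) = -\tfrac14 \sum \Rm_{i\oo j k \oo l}(0) z^i z^k \oo z^j \oo z^l + O(|z|^5)$ (matching the sign convention \eqref{curvatureconv}). Rescaling $z \mapsto v/\sqrt{k}$ gives $\vp(v/\sqrt{k}) = \tfrac1k|v|^2\bigl(1 + \tfrac{a_2(v,\oo v)}{k} + \cdots\bigr)$, so that $a_2(v,\oo v)\,|v|^2 = -\tfrac14\sum \Rm_{i\oo j k\oo l}(0) v^i v^k \oo v^j \oo v^l$, while $a_3$ involves the (real) fifth-order Taylor coefficients of $\vp$. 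One checks $a_2 \not\equiv 0$ in general but $a_1 = 0$ automatically because B\"ochner coordinates kill the cubic terms.

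Next I would plug the ansatz \eqref{eq:BKexpan} into the local reproducing property and extract the order-$k^{-1}$ (i.e. $j=2$) identity. Concretely, writing the perturbed weight as $e^{-k\vp(v/\sqrt k)} = e^{-|v|^2}\bigl(1 - a_2(v,\oo v)|v|^2/k + \cdots\bigr)$ and the kernel as $k^n e^{u\oo v}(1 + c_2/k + \cdots)$, the reproducing identity tested against monomials $f(v) = v^\alpha$ becomes, at order $k^{-1}$, a linear equation for $c_2$ of the schematic form
\begin{equation*}
\int_{\bCn} v^\alpha\, \overline{e^{u\oo v}\Bigl(c_2(u,\oo v) + \text{[correction from }a_2\text{]}\Bigr)}\, e^{-|v|^2}\, dV(v) = 0,
\end{equation*}
which, using the Bargmann--Fock reproducing property and the Gaussian moment identities $\int v^\alpha \oo v^\beta e^{-|v|^2}dV = \pi^n \alpha!\,\delta_{\alpha\beta}$ (these are exactly the combinatorial identities referenced for \S \ref{refkerbCn}), forces $c_2(u,\oo v)$ to equal the holomorphic-in-$u$, antiholomorphic-in-$\oo v$ part of the relevant convolution of $a_2$ against the Bargmann--Fock kernel. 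Carrying out this single Gaussian integral is the only real computation; it produces a constant term and a degree-$(2,2)$ term in $(u,\oo v)$.

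The constant term is pinned down by the $u=v=0$ specialization: there the local reproducing kernel must reproduce constants, and comparing with the known trace/volume normalization (or equivalently with Lu's on-diagonal coefficient) gives the constant $\tfrac{\rho}{2}$, where $\rho$ is the scalar curvature obtained by the appropriate contraction of $\Rm$ in the convention \eqref{curvatureconv}. The degree-$(2,2)$ part comes directly from the $a_2$ correction and equals $-\tfrac14\sum_{i,j,k,l}\Rm_{i\oo j k\oo l}(0)\, u^i u^k \oo v^j \oo v^l$; note the Hermitian symmetry $c_2(u,\oo v) = \overline{c_2(v,\oo u)}$ is manifest since $\Rm_{i\oo j k \oo l}$ enjoys the corresponding symmetry. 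Finally $c_0 = 1$ is immediate from $a_j = 0$ at leading order, and $c_1 = 0$ follows from $a_1 = 0$ together with the parity constraint $c_j^{p,q} = 0$ for $p+q \not\equiv j \bmod 2$ already established in Theorem \ref{mainresult}; these are recorded in Lemmas \ref{csub0is0} and \ref{csub1is0}.

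The main obstacle I anticipate is bookkeeping rather than conceptual: correctly normalizing the quartic Taylor coefficient of $\vp$ in terms of $\Rm_{i\oo j k\oo l}$ (signs and factors of $2$ depend delicately on the curvature convention \eqref{curvatureconv} and on the $-\tfrac{\sqrt{-1}}{2\pi}\del\dbar\log h$ normalization of $\gw$), and then propagating those constants faithfully through the Gaussian moment computation so that the contraction producing $\rho$ comes out with coefficient exactly $\tfrac12$. A secondary care point is that $c_2$ as produced by the raw integral may contain a lower-degree piece proportional to $|u|^2$ or the like; one must verify using the $p+q \le 2j$ support condition and Hermiticity that such spurious terms either cancel or are absorbed into the constant, leaving precisely the stated two-term formula.
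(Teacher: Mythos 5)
Your overall strategy is the paper's: expand the potential in B\"ochner coordinates, plug the ansatz into the reproducing identity, test against the monomials $1$, $v^\alpha$, $v^\alpha v^\beta$, and evaluate with the Gaussian moment identity (Lemma \ref{MainLemma}). But there is one concrete omission that would derail the computation: the weight in the local inner product is $e^{-k\varphi}\tfrac{\omega^n}{n!} = e^{-k\varphi}\,\Omega\, dV$ with $\Omega = \det(\varphi_{i\oo j})$, whereas you expand only the exponential factor $e^{-k\varphi(v/\sqrt{k})}$. The coefficients $a_m^{r,s}$ in \eqref{eq:acoeff} are by definition the Taylor coefficients of the \emph{product} $e^{-kR}\Omega$, and at order $k^{-1}$ the determinant contributes the term $-\tfrac1k\Ric_{k\oo l}v^k\oo v^l$ (Lemma \ref{remacoeff}), which is of the same size as the quartic term $+\tfrac{1}{4k}\Rm_{i\oo j k\oo l}v^iv^k\oo v^j\oo v^l$ coming from $e^{-kR}$. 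Without it, testing against $f=1$ gives $c_2^{0,0} = -\tfrac14\cdot 2\rho = -\tfrac{\rho}{2}$ instead of $+\tfrac{\rho}{2}$ (the two admissible pairings of the Riemann term under the Gaussian integral each contract to $\rho$), and testing against $f = v^\alpha$ gives $c_2^{(i)(\alpha)} = -\Ric_{i\oo\alpha} \neq 0$ rather than $0$: the vanishing of the degree-$(1,1)$ piece is precisely a cancellation between the Ricci contribution of $\Omega$ and the four pairings of the Riemann contribution. Only the degree-$(2,2)$ coefficient survives your omission, since the Ricci term cannot saturate two free holomorphic indices.

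A second, smaller point: you propose to pin down the constant term "by comparing with the known trace/volume normalization (or equivalently with Lu's on-diagonal coefficient)." The paper deliberately does not do this --- it derives $c_2^{0,0} = \tfrac{\rho}{2}$ entirely from the Gaussian integral (including the $\Omega$ contribution above), and explicitly remarks that the computation is independent of prior results on the diagonal expansion. Importing Lu's value would both weaken the result and mask the sign error caused by dropping $\Omega$. Once the determinant factor is restored, your plan coincides with the paper's computation and the stated formula follows.
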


\begin{rmk}
Our computation of the coefficients is independent of any previous results on the coefficients of the Bergman kernel expansion on the diagonal.
In fact, the iterative computation implemented to compute these quantities captures the essential strategy one may utilize to compute (if they feel so inclined) any desired $c_j$.
\end{rmk}

\subsection{Twisted bundle case}
Let $E \to M$ be a Hermitian holomorphic vector bundle. Consider the twist $E \ten L^{\ten k}$.  In this setting, the Bergman kernel, $K_{k,E}(z,w)$, can be defined as in the introduction and one can study the asymptotic expansion of this kernel as $k \to \infty$.  In fact our local construction follows with the only difference being the volume form and the local to global construction follows similarly by H\"ormander $\mathcal{L}^2$-estimates to extend our result to the twisted bundle case.
\subsection{Notations and conventions}
We now set the conventions which will be used throughout the paper. Let $\mathbb{Z}_+$ denote the collection of all nonnegative integers. Let $\ell \in \mathbb{N}$ and let $\ga, \gb \in \mathbb{Z}_{+}^{\ell}$ such that $\ga :=  (\ga_1, ... , \ga_{\ell})$ and $\gb :=  (\gb_1, ... , \gb_{\ell})$. Define
\begin{equation*}
\ga! := \prod_{i=1}^{\ell}{\ga_i!}, \qquad |\ga| := \sum_{i = 1}^{\ell} \ga_i.
\end{equation*}
A \emph{multiindex binomial coefficient} is defined by the following
\begin{equation*}
\binom{\ga}{\gb} := \prod_{i=1}^{\ell} \binom{\ga_i}{\gb_i}.
\end{equation*}
Note we use the convention that if $q > p$ then $\binom{p}{q} := 0$. Lastly, \emph{multiindex inequalities} will be defined as follows
\begin{equation*}
\ga \leq \gb \Longleftrightarrow \text{$\ga_i \leq \gb_i$ for all $i$},
\end{equation*}
and furthermore
\begin{equation*}
\ga < \gb  \Longleftrightarrow \text{$\ga \leq \gb$ and $\ga \neq \gb $}.
\end{equation*}
For any summations if the ranges are not specified one will assume summation indices range over multiindex values $\mathbb{Z}_+^{\ell}$.

Given $x \in M$ and $r >0$ we set
\begin{equation*}
B_x(r) :=\left\{ y \in M :  \dist(y,x) < r  \right\}.
\end{equation*}
In $ \bCn$ we set $B(r):= B_0(r)$ and $B := B(1)$.

Given local holomorphic coordinates
$\left\{ z_i \right\}_{i = 1}^n$, the volume form is given by
 \begin{equation*}
dV := \left( \frac{\sqrt{-1}}{2\pi}\right)^n dz^1 \wedge d\bar{z}^1 \wedge \cdots \wedge dz^n \wedge d\bar{z}^n.
\end{equation*}
The components of the curvature tensor are given by
\begin{align}\label{curvatureconv}
\begin{split}
&\Rm_{i\oo jk\oo l} := -\frac{\dd^2 g_{i\oo j}}{\dd z^k \dd\oo z^j} + \sum_{s,t}g^{s\oo t}\frac{\dd g_{i\oo t}}{\dd z^k}\frac{\dd g_{s\oo j}}{\dd\oo z^l}, \qquad \text{(Riemannian curvature)} \\
&\Ric_{i\oo j} = \sum_{k,l}g^{k \oo l}R_{i\oo j k\oo l}, \qquad \text{(Ricci curvature)} \\
&\rho = \sum_{i,j}g^{i\oo j}\Ric_{i\oo j}. \qquad \text{(scalar curvature)}
\end{split}
\end{align}
Furthermore we set
\begin{equation*}
\Omega(z) := \det \left( \frac{\del^2}{\del z_i \del \bar{z_j}} \vp (z) \right).
\end{equation*}
Hence in local coordinates
\begin{equation*}
\frac{\omega^n}{n!} = \Omega \, dV.
\end{equation*}

We define the local weighted $\mathcal L ^2$ spaces by
\begin{equation*}
\mathcal{L}^2(U_x,k\vp) := \left\{ f \ : \int_{U_x}{\left| f(z) \right|^2e^{-k\varphi(z)}\frac{\omega^n}{n!}} < \infty \right\},
\end{equation*} and we define its closed subspace of $\mathcal L^2$-holomorphic functions by $H^0(U_x, k\vp)$. 
The Bargmann-Fock space $\mathcal F$ is precisely the space $H^0(\mathbb C^n, |x|^2)$. We refer the reader to the appendix for some facts about $\mathcal F$ (\S \ref{s:appendix}). 

Finally, we use the following $\frac{1}{\sqrt{k}}$-rescaling convention on the potential:
\begin{equation}\label{scaledpotential}
\varphi_k(v) := \varphi\left(\frac{v}{\sqrt{k}}\right).
\end{equation}

\section{Local construction}\label{refkerbCn}
\subsection{Existence of coefficients}

Let $a_m^{r,s}$ be the coefficients in the formal power series expansion of the product
\begin{equation}\label{eq:acoeff}
e^{-kR\left(\frac{v}{\sqrt{k}} \right)} \Omega \left(\tfrac{v}{\sqrt{k}} \right) = \sum_{m=0}^\infty\sum^{2m}_{r+s=0}\frac{a_m^{r,s}v^r\oo v^s}{\sqrt{k^m}}.
\end{equation}

\begin{prop}[Existence of coefficients]\label{coeffexist}
There exist unique coefficients $c_j^{p,q} \in \mathbb{C}$ depending only on the \k potential $\varphi$ such that for any polynomial $F$ and any $N\geq 0$,
\begin{equation}\label{ccoeff}
F \left(\frac{u}{\sqrt{k}} \right) = \int_{\bCn}F \left(\frac{v}{\sqrt{k}}\right)e^{u\cdot\bar{v}-|v|^2} \left(\sum_{t=0}^N\sum_{m+j=t}\sum_{p,q}\sum_{r,s} \frac{c_j^{p,q}a_m^{r,s}}{\sqrt{k^t}}u^p\bar{v}^q v^r\oo v^s  \right)dV.
\end{equation}
Furthermore, the coefficients $c_j^{p,q}$ have the following finiteness and ``parity property'' (c.f. Definition \ref{parprop})
\begin{enumerate}
\item $c_m^{p,q} = 0$ when $|p+q| >2m$,
\item $c_m^{p,q} = 0$ when $|p|+|q| \nmodt m$.
\end{enumerate}
\end{prop}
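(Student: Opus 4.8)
The plan is to determine the coefficients $c_j^{p,q}$ recursively by expanding both sides of \eqref{ccoeff} as power series in $\frac{1}{\sqrt{k}}$ and matching coefficients order by order, using the Bargmann-Fock reproducing property as the base case. First I would reduce to monomials: since \eqref{ccoeff} is linear in $F$, it suffices to verify it for each $F(z) = z^\alpha$, and after the substitution $u \mapsto u, v\mapsto v$ the left side becomes $\frac{u^\alpha}{\sqrt{k}^{|\alpha|}}$. On the right, I would expand $F(\frac{v}{\sqrt k}) = \frac{v^\alpha}{\sqrt k^{|\alpha|}}$ and collect the full integrand as a power series in $\frac{1}{\sqrt k}$; the key analytic input is the Bargmann-Fock integral identity (from the appendix, \S\ref{s:appendix}), namely that $\int_{\bCn} v^a \oo v^b \, e^{u\cdot\oo v - |v|^2}\, dV$ is a polynomial in $u$ that reproduces monomials — concretely $\int_{\bCn} \oo v^b e^{u\cdot \oo v - |v|^2} dV = u^b$ and more generally the presence of extra $v^a$ factors produces derivatives $\partial_u^a(u^b)$ up to constants. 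This turns the right side of \eqref{ccoeff} into an \emph{identity between polynomials in $u$ and formal power series in $\frac{1}{\sqrt k}$}, which must hold identically.

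The main step is the recursion. Write the order-$t$ part of the right side: it is $\sum_{m+j=t}\sum_{p,q,r,s} c_j^{p,q} a_m^{r,s}\, u^p\, (\text{Bargmann-Fock integral of } \bar v^q v^r \bar v^s\, v^\alpha)$. The $t=0$ term involves only $c_0^{p,q}$ and $a_0^{r,s}$; since $a_0^{0,0}=1$ (as $e^{-kR(v/\sqrt k)}\Omega(v/\sqrt k) \to 1$, because $R = O(|z|^4)$ so $kR(v/\sqrt k) = O(k^{-1})$ pointwise, and $\Omega(0)=1$ in Böchner coordinates), matching against $u^\alpha$ forces $c_0^{p,q} = \delta^{p,q}_{0,0}$, i.e. $c_0 = 1$. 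For general $t$, the order-$t$ equation expresses (a nonzero constant times) $c_t^{p,q}$ — the $m=0$, $j=t$ contribution, which after the Bargmann-Fock integration contributes terms $c_t^{p,q} u^p \cdot (\text{monomials in }u\text{ from }\partial_u^\alpha(u^q))$ with invertible leading behavior — in terms of lower-order data $c_j^{p,q}$ with $j<t$ and the known $a_m^{r,s}$. This triangular structure gives existence and uniqueness of all $c_j^{p,q}$ simultaneously, by choosing them so that every order $t\geq 1$ contributes zero. I would phrase this cleanly by first testing against $F = 1$ to pin down many $c_j^{p,q}$, then against general $F = z^\alpha$ to see the remaining ones are determined consistently (consistency is automatic because the $F$-dependence enters only through the Bargmann-Fock integral, which is a fixed linear functional).

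For the finiteness property (1), observe that $a_m^{r,s} = 0$ unless $r+s \leq 2m$ (this is built into \eqref{eq:acoeff}, and holds because each extra power of $\frac{1}{\sqrt k}$ in the Taylor expansion of $e^{-kR}\Omega$ comes with at least... actually two... no — one notes $kR(v/\sqrt k)$ is a power series in $\frac{1}{\sqrt k}$ whose $\frac{1}{\sqrt k^m}$-coefficient is a polynomial in $v,\bar v$ of degree $\leq m+2 \leq 2m$ for $m\geq 2$, and similarly for $\Omega$; degrees then add correctly under multiplication). Carrying this degree bound through the recursion: the order-$t$ term must vanish, and the highest-degree monomials in $u$ at order $t$ come from $c_t^{p,q}$ with $|p+q|$ large against $a_0^{0,0}=1$; degree counting forces $c_t^{p,q}=0$ for $|p+q|>2t$. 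For the parity property (2), I would track the $\mathbb Z_2$-grading where $v,\bar v$ (equivalently $u,\bar v$) are odd: $R(z) = O(|z|^4)$ is even, so $kR(v/\sqrt k)$, expanded in $\frac{1}{\sqrt k}$, has its $\frac{1}{\sqrt k^m}$-coefficient of the same parity as $m$ in the $(v,\bar v)$-degree; hence $a_m^{r,s}=0$ unless $r+s \equiv m \pmod 2$, and the Bargmann-Fock integral preserves this parity (it pairs $\bar v$'s with $u$'s). Propagating through the recursion, the vanishing of each order-$t$ term forces $c_m^{p,q}=0$ unless $|p|+|q| \equiv m \pmod 2$. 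The main obstacle I anticipate is bookkeeping: carefully setting up the multi-index recursion and the Bargmann-Fock integration formula so that the triangular (lower-order-determines-higher-order) structure is manifest, and verifying that the degree and parity bounds are genuinely preserved at each step rather than merely plausible — in particular, making sure the base-case normalizations $a_0^{0,0}=1$ and the invertibility of the $m=0$ block are nailed down.
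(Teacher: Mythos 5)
Your reduction to monomials via the Bargmann--Fock identity $\int_{\bCn}\bar v^p v^q e^{u\cdot\bar v-|v|^2}dV=\frac{q!}{(q-p)!}u^{q-p}$, the triangular recursion on the order $t$ (with the $m=0$ block invertible because $a_0^{r,s}=\delta^{r,s}_{0,0}$), the subinduction on the test monomial to pin down $c_\tau^{p,q}$ for increasing $|q|$, and the $\mathbb{Z}_2$-grading argument for the parity property all match the paper's proof. The genuine gap is in property (1), the degree bound $c_m^{p,q}=0$ for $|p+q|>2m$. Your stated reason --- ``the highest-degree monomials in $u$ at order $t$ come from $c_t^{p,q}$ with $|p+q|$ large against $a_0^{0,0}=1$; degree counting forces the vanishing'' --- rests on a false premise. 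In the order-$\tau$ identity obtained by testing against $u^l$, the unknown $c_\tau^{p,q}$ enters through the monomial $u^{p+l-q}$ with coefficient $\frac{l!}{(l-q)!}$, whose degree is $|p|+|l|-|q|$: it \emph{decreases} as $|q|$ grows. In particular the term carrying $c_\tau^{p,l}$ is just $c_\tau^{p,l}\,l!\,u^{p}$, of degree $|p|$, while the naive degree bound on the right-hand side (from the inductive hypothesis $|p|+|q|\le 2j$, $|r|+|s|\le 2(\tau-j)$) is only $2\tau+|l|$. That gives $|p|\le 2\tau+|l|$, which is far weaker than the needed $|p|+|l|\le 2\tau$, and moreover distinct pairs $(p,q)$ on the left contribute to the same monomial $u^\gamma$, so one cannot read off individual coefficients by degree alone.

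The paper closes exactly this gap with a separate argument (Theorem \ref{pq2mcsubmpq0}): it forms the alternating combination $\Psi_l(B)=\sum_{w\le l}(-1)^{|w|}\binom{l}{w}u^w B_{l-w}$ over the whole family of identities indexed by test monomials $u^{l-w}$. The combinatorial identity of Corollary \ref{binomid1} makes $\Psi_l$ of the left-hand side collapse to $\sum_p c_\tau^{p,l}u^p\,l!$ (isolating the top index $q=l$), while Corollary \ref{binomid2} kills all right-hand-side terms with $r<\eta:=r+l-q-s$, and only then does the inductive degree count give $\deg\Psi_l(\mathcal Q)\le 2\tau-|l|$ rather than $2\tau+|l|$. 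So the finiteness statement is not a routine consequence of the recursion; it needs this extra cancellation mechanism (or some substitute for it), and your proposal as written would stall at this point. Your treatment of the $a_m^{r,s}$ bound ($|r+s|\le 2m$, via the weight/degree bookkeeping for powers of $kR(v/\sqrt k)$) is fine and matches the paper's Lemma \ref{finitecoeffa}.
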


\begin{rmk}
When $N = 0$ the equation reduces to the reproducing property of the Bargmann-Fock kernel. Also note that the expression in parenthesis is precisely the truncation up to $k^{-N/2}$ of the product
\begin{equation*}
\left(\sum_{j=0}^\infty \frac{c_j(u, \oo v)}{\sqrt{k^j}}\right ) e^{-kR\left(\frac{v}{\sqrt{k}} \right)} \Omega \left(\tfrac{v}{\sqrt{k}} \right). 
\end{equation*}
\end{rmk}

The above proposition is one of the key results of the paper, though we defer the proof until \S \ref{proofofprop} as it is purely algebraic.  But at the heart of the proof stands Lemma \ref{MainLemma} in the appendix, which states that
\begin{equation*}
\int_{\bCn}{\bar{v}^{p}v^q e^{u \cdot \bar{v} - |v|^2} dV} =
\begin{cases}
0 & \text{ if } p > q, \\
\frac{q!}{(q-p)!} u^{q-p} & \text{ if } p \leq q.
\end{cases}
\end{equation*}
In the following section we prove Proposition \ref{locrepprop} using our key Proposition \ref{coeffexist}.

\section{Remainder estimates}\label{localestimates}
\subsection{On the choice of the shrinking radius}
We begin by choosing a point $p \in M$ and choosing some local neighborhood $U_x$ centered at $p$, which admits a local trivialization. We rescale coordinates via the identifications
\begin{equation*}
\begin{cases}
x &= p + \frac{u}{\sqrt{k}},\\
y &= p + \frac{v}{\sqrt{k}}.
\end{cases}
\end{equation*}
We additionally use B\"ochner coordinates (cf. Proposition \ref{expkahlerpot}). Thus the potential scales as follows:
\begin{equation*}
k \vp_k (v) := k \varphi( \tfrac{v}{\sqrt{k}})=\left| v \right|^2 + k R\left(\tfrac{v}{\sqrt{k}} \right).
\end{equation*}
 Under B\"ochner coordinates we have $R(z) = O(|z|^4)$. We fix $0< \eps < \frac{1}{4}$ for the remainder of the paper, and we further stipulate that $v$ will be chosen from within $B(k^{\frac{1}{4}-\eps})$ to ensure that $kR\left( \frac{v}{\sqrt{k}} \right) = O(k^{-\eps})$. Consequently, for large $k$, one may compute in the setting of a \emph{perturbed Bargmann-Fock space}. In particular, this implies that for $k$ sufficiently large
\begin{equation}\label{eqnorms}
\frac{1}{2} ||f||_{\mathcal L^2(B(k^{-1/4-\eps}),\, k|z|^2)} \leq ||f||_{\mathcal L^2(B(k^{-1/4-\eps}),\, k\vp(z) )}  \leq 2 ||f||_{\mathcal L^2(B(k^{-1/4-\eps}),\, k|z|^2)}.
\end{equation}
To develop this idea effectively, we choose some cutoff function $\chi \in C_c^{\infty}(\bCn)$ satisfying
\begin{equation}\label{cutofffunc}
\chi(x) =
\begin{cases}
1 & \mbox{ if } |x|\leq \frac{1}{2}, \\
0 & \mbox{ if } |x|\geq 1,
\end{cases}
\end{equation}
and set $\chi_k(x) := \chi(k^{\frac{1}{4}+\eps}x)$.
We then incorporate $\chi_k$ into the integral in the local reproducing property of Proposition \ref{locrepprop}.  Note that
\begin{equation*}
\supp(d\chi_k(\tfrac{z}{\sqrt{k}})) \subset \{ z \ | \ \tfrac{1}{2}k^{\frac{1}{4}-\eps} \leq |z| \leq k^{\frac{1}{4}-\eps} \}.
\end{equation*}
So for $|u| \leq 1$ and $v \in \supp(d\chi_k(\tfrac{v}{\sqrt{k}}))$, their distance has a lower bound $|u-v| \geq \frac{1}{4}k^{\frac{1}{4}-\eps}$, which is crucial in obtaining an estimate for the exponential decay outside the near-diagonal neighborhood. We then analyze the orders of the remainders of the truncations of the locally defined pieces to show that their contribution to the local reproducing property is of negligible order.

\subsection{Local reproducing kernel}

In this section we show that local reproducing kernel with the coefficients chosen in \S \ref{refkerbCn} actually satisfies the local reproducing property up to a small error, and give measure of such error. Before doing so, we introduce some notation. Given a domain $U \subset \mathbb{C}^n$ and $f \in C^\infty(U)$ we set
\begin{equation*}
f_M(x) :  = \sum_{|j| \leq M}\frac{D^{j}f(0)}{j!} x^j.
\end{equation*}
In particular, for $x \in \mathbb C$,
\begin{equation*}
\left (e^x \right)_M= \sum_{j=0}^M \frac{x^j}{j!}. 
\end{equation*}
We will also use this notation for $f=R$ with $M=2N+5$, and $f= \Omega$ with $M=2N+1$.

\begin{prop}[Local reproducing property]\label{locrepprop}
Let $f \in H^0(B)$, and $c_j$ be the quantities as found in Proposition \ref{coeffexist}. Then for $u \in B$,
\begin{align*}
\begin{split}
f\left(\frac{u}{\sqrt{k}} \right) =& \left\langle \chi_{k}\left(\frac{v}{\sqrt{k}}\right)f\left(\frac{v}{\sqrt{k}}\right),e^{\bar{u}\cdot v}\left(\sum_{j}^N \frac{c_j\left( v, \bar{u} \right)}{\sqrt{k^j}} \right)\right\rangle_{\mathcal{L}^2 \left(B(\sqrt{k}),k\varphi_k(v) \right)} + O\left({k}^{n-\frac{N+1}{2}}\right)\|f\|_{\mathcal{L}^2\left(B,k\varphi \right)}.
\end{split}
\end{align*}
\end{prop}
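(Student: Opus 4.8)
The plan is to start from the exact reproducing identity on $\mathbb{C}^n$ guaranteed by Proposition \ref{coeffexist} (applied to the polynomial $F$ obtained by Taylor-truncating $f$), transplant it to the geodesic ball $B$ via the rescaling $x = p + u/\sqrt{k}$, and then estimate each discrepancy introduced by the transplantation. Writing out what Proposition \ref{coeffexist} gives us, the key observation is that the bracketed polynomial there is exactly the truncation to order $k^{-N/2}$ of the product $\bigl(\sum_j c_j(u,\bar v) k^{-j/2}\bigr) e^{-kR(v/\sqrt{k})}\Omega(v/\sqrt{k})$. Since the weighted inner product $\langle\,\cdot\,,\,\cdot\,\rangle_{\mathcal{L}^2(B(\sqrt k), k\varphi_k)}$ carries the weight $e^{-k\varphi_k(v)}\,\Omega(v/\sqrt k)\,dV = e^{-|v|^2}e^{-kR(v/\sqrt k)}\Omega(v/\sqrt k)\,dV$, the factors $e^{-kR}$ and $\Omega$ in the statement of Proposition \ref{locrepprop} are precisely absorbed, up to a truncation error, by switching from the flat Gaussian integral on $\mathbb{C}^n$ to the curved weighted integral. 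So the main body of the proof is a sequence of four error bookkeeping steps.

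First I would replace $f$ by its Taylor polynomial $f_{M}$ of sufficiently high degree $M$ (depending on $N$ and $n$); the remainder $f - f_M$ is $O(|v|^{M+1})$ near $0$ and, after the Gaussian weight kills it, contributes only $O(k^{n-(N+1)/2})\|f\|_{\mathcal{L}^2(B,k\varphi)}$ — here one uses the sub-mean-value / peak-section type bound on $|f(u/\sqrt k)|$ and a Cauchy–Schwarz estimate against the $\mathcal{L}^2$ norm, exactly the kind of estimate \eqref{eqnorms} is set up to provide. Second, for the polynomial $F = f_M$, Proposition \ref{coeffexist} gives an \emph{exact} identity over all of $\mathbb{C}^n$; I would then cut it off to $B(\sqrt k)$ (equivalently, to the ball $B$ in the original coordinates), and bound the tail integral over $\mathbb{C}^n \setminus B(\sqrt k)$ using the Gaussian decay $e^{u\cdot\bar v - |v|^2}$ and the polynomial growth of the integrand — this tail is exponentially small in $k$. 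Third, and more delicately, I would insert the cutoff $\chi_k(v/\sqrt k)$: since $\chi_k \equiv 1$ on $B(\tfrac12 k^{1/4-\eps})$ and the integrand decays like $e^{-|v|^2}$, the region where $\chi_k \neq 1$ forces $|v| \gtrsim k^{1/4-\eps}$, so replacing $1$ by $\chi_k(v/\sqrt k)$ costs only $O(e^{-c k^{1/2-2\eps}})$, again negligible. Fourth, I would replace the flat measure $e^{-|v|^2}\,dV$ weighted by the \emph{truncated} product $(e^{-kR})_{M}(\Omega)_{M'}$ — which is what appears implicitly in Proposition \ref{coeffexist} through the $a_m^{r,s}$ — by the genuine weighted measure $e^{-k\varphi_k}\Omega\,dV$; on the support of $\chi_k$ one has $kR(v/\sqrt k) = O(k^{-\eps})$ and the difference between $e^{-kR}\Omega$ and its Taylor truncation is $O(k^{-(N+1)/2})$ pointwise there (this is exactly why $M = 2N+5$ for $R$ and $M = 2N+1$ for $\Omega$ were chosen), so this swap also only contributes the allowed error. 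Assembling these four steps turns the exact $\mathbb{C}^n$ identity of Proposition \ref{coeffexist} into the claimed approximate local reproducing identity.

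The step I expect to be the genuine obstacle is the third/fourth: controlling the interaction between the shrinking cutoff, the polynomial-in-$v$ growth of the integrand (the degree of $F = f_M$ and of the $c_j v^r \bar v^s$ terms grows with $N$), and the Gaussian weight, uniformly in $k$ and in $u \in B$. One has to be careful that the constants $C_{N,m}$ absorb the combinatorial blow-up in the number of monomials and that the "error" $O(k^{n-(N+1)/2})$ is measured against $\|f\|_{\mathcal{L}^2(B,k\varphi)}$ rather than, say, a sup-norm of $f$ — this is where the comparability \eqref{eqnorms} of the perturbed and model weighted norms, together with a Bergman-type reproducing estimate on $\mathbb{C}^n$ bounding pointwise values by the $\mathcal{L}^2$-norm, does the essential work. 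Everything else is a routine Gaussian-tail estimate, but these have to be organized so the five-fold sum (over $t$, over $m+j=t$, over $p,q$, over $r,s$) remains finite and each piece is estimated by the same power of $k$.
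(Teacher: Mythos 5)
Your overall architecture is the paper's: start from the exact $\mathbb{C}^n$ identity of Proposition \ref{coeffexist}, insert the cutoff $\chi_k$, and trade the truncated weight $(e^{-kR})_M\,\Omega_{2N+1}$ for the true weight $e^{-kR}\Omega$ on $\supp\chi_k$ (your fourth step is exactly Lemmas \ref{remainofexp} and \ref{remofdet}, and those pieces of your argument are fine). But there is a genuine gap precisely at the step you dismiss as a ``routine Gaussian-tail estimate.'' The error introduced by inserting $\chi_k$ is the integral of $(1-\chi_k(v/\sqrt k))\,F(v/\sqrt k)\,e^{u\cdot\bar v-|v|^2}$ times a polynomial over \emph{all} of $\mathbb{C}^n\setminus B(\tfrac12 k^{1/4-\eps})$, and it must be bounded by $\|F\|_{\mathcal{L}^2(B,k\varphi)}$ with constants independent of $\deg F$. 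Outside $B(k^{1/4-\eps})$ the comparability \eqref{eqnorms} between $e^{-|v|^2}$ and $e^{-k\varphi_k(v)}$ fails, so a direct Cauchy--Schwarz against the Gaussian produces the Gaussian-weighted norm of $F$ over an unbounded exterior region, which is \emph{not} controlled by $\|F\|_{\mathcal{L}^2(B,k\varphi)}$ (a polynomial of large degree carries most of its Gaussian mass far out). The paper's Lemma \ref{estoutdisk} resolves exactly this by iterating the integration by parts against $\dbar\bigl(\sum_i e^{\bar v(u-v)}(u^i-v^i)^{-1}d\widehat{\oo V^i}\bigr)$ $2N+1$ times: since $c_j a_m$ has $\bar v$-degree at most $2N$, at least one derivative always lands on $1-\chi_k$, which localizes the integrand to the annulus $\tfrac12 k^{1/4-\eps}\le|v|\le k^{1/4-\eps}$ where \eqref{eqnorms} \emph{does} hold, while the factor $e^{\bar v(u-v)}$ supplies the $e^{-|u-v|^2}$ decay. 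Your proposal contains no substitute for this mechanism.

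A second, related flaw is your opening move of replacing $f$ by a Taylor polynomial $f_M$ of degree $M=M(N,n)$ fixed once and for all. The remainder $f-f_M=O(|v|^{M+1})$ carries a constant involving the $(M+1)$-st derivatives of $f$, and bounding those by $\|f\|_{\mathcal{L}^2(B,k\varphi)}$ (via sup-norm and Cauchy estimates on $B$) costs a factor of order $e^{ck}$ from the weight, so for no fixed $M$ does this contribute $O(k^{n-(N+1)/2})\|f\|_{\mathcal{L}^2(B,k\varphi)}$. The paper avoids this by proving the estimate for \emph{every} holomorphic polynomial $F$ with constants depending only on $N$ (not on $\deg F$) --- which is again what the integration-by-parts localization buys --- and then setting $F=f_L$ and letting $L\to\infty$ using uniform convergence of the Taylor series on $\supp\chi_k$.
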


\noindent To prepare to verify Proposition \ref{locrepprop}, we require a series of estimates on the Taylor series remainders of the exponential term and the volume form.  We also show that the integral outside of the the support of $\chi_k$ is rapidly decaying.

\begin{lemma}[Remainder of the exponential term]\label{remainofexp}
Let $M = [\frac{N+1}{2\epsilon}]+1$. Then for $N \geq 0$ and any $f \in H^0(B)$,
\begin{align*}
\int_{B(\sqrt{k})}&\chi_k \left(\tfrac{v}{\sqrt{k}} \right)f \left(\tfrac{v}{\sqrt{k}} \right)e^{u\bar{v}-|v|^2}\left(\sum_{j =0}^N \tfrac{c_j(u,\oo v)}{\sqrt{k^j}}\right) \left( e^{-kR\left(\frac{u}{\sqrt{k}} \right)} - \left(e^{-kR_{2N+5}\left(\frac{u}{\sqrt{k}} \right)} \right)_{M} \right) \Omega \left( \tfrac{v}{\sqrt{k}} \right) dV \\
&= \|f\|_{\mathcal{L}^2(B,k\varphi)}O \left(k^{n-\tfrac{N+1}{2}}\right).
\end{align*}
\end{lemma}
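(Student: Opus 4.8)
The statement asserts that replacing the true exponential weight $e^{-kR(v/\sqrt k)}$ by a suitable polynomial truncation of it introduces only an error of order $k^{n-(N+1)/2}$ when paired against a holomorphic $f$ inside the inner product. My plan is to control the ``discrepancy factor''
\[
E_k(v) := e^{-kR\left(\frac{v}{\sqrt k}\right)} - \left(e^{-kR_{2N+5}\left(\frac{v}{\sqrt k}\right)}\right)_M
\]
pointwise on the relevant region, and then absorb everything else into standard Gaussian-type integral bounds on $\mathbb C^n$ together with Cauchy--Schwarz. The first step is to split $E_k$ into two pieces: (i) the error from truncating $R$ itself to its degree-$(2N+5)$ Taylor polynomial $R_{2N+5}$ inside the exponent, and (ii) the error from truncating the exponential series to order $M$. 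For (i), on $v\in\supp\chi_k(\cdot/\sqrt k)\subset B(k^{1/4-\eps})$ one has $|v/\sqrt k|\le k^{-1/4-\eps}$ small, so $R(v/\sqrt k)-R_{2N+5}(v/\sqrt k) = O(|v/\sqrt k|^{2N+6})$, hence $k\bigl(R-R_{2N+5}\bigr)(v/\sqrt k) = O\!\left(k^{-1/2-(2N+6)(\frac14+\eps)+\cdots}\right)$ — more simply, since $kR(v/\sqrt k)=O(k^{-\eps})$ is already tiny, $k(R-R_{2N+5})(v/\sqrt k)$ is of order at most $k^{-(N+1)/2}$ (with room to spare, because each extra degree in $R$ costs a power $k^{-1/4-\eps}\cdot k^{-1/2}$-ish and we went up to degree $2N+5$). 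Exponentiating and using $|e^a-e^b|\le|a-b|e^{\max(|a|,|b|)}$ with both exponents $O(k^{-\eps})$ bounded, this contributes $O(k^{-(N+1)/2})$ uniformly. For (ii): write $w:=-kR_{2N+5}(v/\sqrt k)$, which on the support of $\chi_k$ satisfies $|w|=O(k^{-\eps})$ (again using $\eps<1/4$ and the degree bookkeeping), so the tail of $e^w$ beyond order $M$ is bounded by $\tfrac{|w|^{M+1}}{(M+1)!}e^{|w|}=O\!\left(k^{-\eps(M+1)}\right)$; the choice $M=[\frac{N+1}{2\eps}]+1$ is made precisely so that $\eps(M+1) > \frac{N+1}{2}$, giving $O(k^{-(N+1)/2})$.

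Having established the pointwise bound $|E_k(v)| \le C\,k^{-(N+1)/2}$ on $\supp\chi_k(\cdot/\sqrt k)$ (and $E_k$ vanishes or is harmless off this set because $\chi_k$ kills it), the next step is to bound the whole integral. I would apply Cauchy--Schwarz in the weighted inner product $\mathcal L^2(B(\sqrt k), k\varphi_k)$: the integrand is the pairing of $\chi_k(v/\sqrt k)f(v/\sqrt k)$ against the function $e^{\bar u\cdot v}\bigl(\sum_{j=0}^N c_j(v,\bar u)k^{-j/2}\bigr)E_k(v)$, but more directly, writing out the integral with measure $e^{-k\varphi_k}\frac{\omega^n}{n!} = e^{-|v|^2}e^{-kR(v/\sqrt k)}\Omega(v/\sqrt k)\,dV$ against $e^{u\bar v - |v|^2}$ leftover factors, I separate $|f(v/\sqrt k)|e^{-k\varphi_k/2}$ from the remaining bounded-times-Gaussian factor. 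The $f$-part gives $\|f\|_{\mathcal L^2(B,k\varphi)}$ up to the uniform equivalence \eqref{eqnorms} and the fact that $\chi_k$ restricts to a smaller ball; the complementary factor is $k^{-(N+1)/2}$ times $e^{\Real(u\bar v)-|v|^2/2}$ times polynomials in $v,\bar u$ times $|\Omega(v/\sqrt k)|^{1/2}$, whose $L^2(dV)$ norm over $\mathbb C^n$ is $O(1)$ by Lemma~\ref{MainLemma}-type Gaussian integrals (using $|u|\le 1$ to bound $e^{2\Real(u\bar v)}e^{-|v|^2}$ by an integrable Gaussian, and $\Omega(v/\sqrt k)=1+O(k^{-\eps})$ bounded on the support). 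The overall $k^n$ comes from the Jacobian of the rescaling $z=v/\sqrt k$ in $dV$, exactly as in the normalization of Proposition~\ref{locrepprop}. Combining, the integral is $O(k^{n-(N+1)/2})\|f\|_{\mathcal L^2(B,k\varphi)}$, as claimed.

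\textbf{Main obstacle.} The routine part is the Gaussian integration and Cauchy--Schwarz; the genuinely delicate point is the \emph{bookkeeping of powers of $k$} in controlling $|E_k(v)|$ uniformly over the shrinking support $|v|\le k^{1/4-\eps}$. One must check that \emph{both} truncation errors — truncating $R$ to degree $2N+5$ and truncating $\exp$ to order $M$ — beat $k^{-(N+1)/2}$ simultaneously, and that they do so uniformly after multiplication by the polynomial factors $v^r\bar u^q$ from the $c_j$'s (which grow like powers of $k^{1/4-\eps}$ on the support but are dominated by the Gaussian $e^{-|v|^2}$). The interplay between $\eps$, the truncation degree $2N+5$ for $R$, and the exponential order $M=[\frac{N+1}{2\eps}]+1$ has to be tracked carefully: the margin in the $R$-truncation is generous (degree $2N+5 \gg N+1$), but the $\exp$-truncation margin is exactly what dictates the value of $M$, so that estimate must be done with the stated $M$ and no slack assumed. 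I would isolate these two numerical estimates as the crux and verify them explicitly before invoking the soft functional-analytic steps.
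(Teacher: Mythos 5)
Your proposal is correct and follows essentially the same route as the paper's proof: split the discrepancy into the error from truncating $R$ to degree $2N+5$ (controlled by the Taylor remainder bound $k\,|R-R_{2N+5}|(v/\sqrt k)\le C_N k\,(|v|/\sqrt k)^{2N+6}\le C_N k^{-(N+1)/2}$ on $|v|\le k^{1/4-\eps}$) and the error from truncating the exponential series at order $M$ (controlled by $|w|^{M+1}/(M+1)!=O(k^{-\eps(M+1)})$ with $M=[\frac{N+1}{2\eps}]+1$), then conclude by Cauchy--Schwarz, the Gaussian integrability of the remaining factors, and the norm equivalence \eqref{eqnorms}. The two numerical estimates you flag as the crux are exactly the two displayed bounds in the paper's argument.
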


\begin{lemma}[Remainder of determinant]\label{remofdet} Let $M = [\frac{N+1}{2\epsilon}]+1$. Then
\begin{align}
\begin{split}\label{eq:remofdet1}
&\left| \int_{B(\sqrt{k})}\chi_k \left(\tfrac{v}{\sqrt{k}} \right)f \left(\tfrac{v}{\sqrt{k}} \right)e^{u\bar{v}-|v|^2}\left(\sum_{j =0}^N \tfrac{c_j(u,\oo v)}{\sqrt{k^j}}\right) \left(e^{-kR_{2N+5}\left(\tfrac{v}{\sqrt{k}} \right)} \right)_{M} \left( \Omega - \Omega_{2N+1} \right)\left( \tfrac{v}{\sqrt{k}} \right) dV \right| \\
& \hsp \hsp = \|f\|_{\mathcal{L}^2(B,k\varphi)}O\left(k^{n-\frac{N+1}{2}} \right).
\end{split}
\end{align}
\end{lemma}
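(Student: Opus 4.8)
The plan is to estimate the integral by bounding the troublesome factor $(\Omega - \Omega_{2N+1})(v/\sqrt{k})$ pointwise on the support of $\chi_k$, and then absorb the remaining factors into a Gaussian-type integral which is in turn controlled by $\|f\|_{\mathcal{L}^2(B,k\varphi)}$ via the reproducing/Cauchy–Schwarz machinery on the Bargmann–Fock space. Since $\Omega$ is smooth and $\Omega(0) = 1$ (as $\vp = |z|^2 + O(|z|^4)$ in B\"ochner coordinates, so the complex Hessian at $0$ is the identity), Taylor's theorem with remainder gives, on the ball $|v| \le k^{1/4-\eps}$ where $|v/\sqrt{k}| \le k^{-1/4-\eps}$,
\begin{equation*}
\left| (\Omega - \Omega_{2N+1})\left( \tfrac{v}{\sqrt{k}} \right) \right| \le C \left| \tfrac{v}{\sqrt{k}} \right|^{2N+2} \le C k^{-(N+1)} |v|^{2N+2}.
\end{equation*}
First I would record this bound, together with the elementary bounds $|\chi_k| \le 1$, $|(e^{-kR_{2N+5}(v/\sqrt{k})})_M| \le C$ on $\supp \chi_k$ (using $kR(v/\sqrt{k}) = O(k^{-\eps})$, so the truncated exponential of a small quantity stays bounded), and $|\sum_{j=0}^N c_j(u,\oo v) k^{-j/2}| \le C(1+|v|)^{2N}$ since each $c_j$ is a polynomial of degree at most $2j$ in $(u,\oo v)$ and $|u| \le 1$.

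Next I would assemble these into
\begin{equation*}
\left| \text{LHS} \right| \le C k^{-(N+1)} \int_{\bCn} \left| f\left( \tfrac{v}{\sqrt{k}} \right) \right| e^{\Real(u\bar v) - |v|^2} (1+|v|)^{4N+2} |v|^{2N+2}\, dV,
\end{equation*}
where I have dropped $\chi_k$ and extended the domain to all of $\bCn$ (legitimate since the integrand is nonnegative). Write $e^{\Real(u \bar v) - |v|^2} = e^{-|v|^2/4} \cdot e^{\Real(u\bar v) - (3/4)|v|^2}$; for $|u| \le 1$ the second factor is bounded by $e^{|u|^2/3} \le C$. Then apply Cauchy–Schwarz in the form
\begin{equation*}
\int_{\bCn} \left| f\left( \tfrac{v}{\sqrt{k}} \right) \right| e^{-|v|^2/4} P(v)\, dV \le \left( \int_{\bCn} \left| f\left( \tfrac{v}{\sqrt{k}} \right) \right|^2 e^{-|v|^2}\, dV \right)^{1/2} \left( \int_{\bCn} e^{|v|^2/2} P(v)^2\, dV \right)^{1/2},
\end{equation*}
with $P(v) = C(1+|v|)^{4N+2}|v|^{2N+2}$; the second integral is a finite constant $C_N$ since the polynomial growth is beaten by $e^{-|v|^2/2}$. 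Wait — the weight $e^{|v|^2/2}$ diverges; instead split the Gaussian more carefully, e.g. write $e^{\Real(u\bar v)-|v|^2} = e^{-|v|^2/2}\cdot e^{\Real(u\bar v) - |v|^2/2}$ and bound the last factor by $e^{|u|^2/2} \le C$, leaving $e^{-|v|^2/2}$ to be split as $e^{-|v|^2/4}\cdot e^{-|v|^2/4}$ for the Cauchy–Schwarz step, so the polynomial integral carries weight $e^{-|v|^2/2}$ and converges. For the first factor, the change of variables $z = v/\sqrt{k}$ gives $\int_{\bCn} |f(z)|^2 e^{-k|z|^2} k^n\, dV(z) = k^n \|f\|^2_{\mathcal{L}^2(B(\sqrt k), k|z|^2)}$, which by \eqref{eqnorms} (and the fact that on $B$ we may compare the unweighted-domain norm) is $\le C k^n \|f\|^2_{\mathcal{L}^2(B, k\varphi)}$ — here one uses that $f$ is holomorphic so its mass is concentrated, or more directly that the rescaled norm over $B(\sqrt k)$ dominates the norm over $B$. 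Combining, $|\text{LHS}| \le C_N k^{-(N+1)} \cdot k^{n/2} \|f\|_{\mathcal{L}^2(B,k\varphi)} \cdot C_N$; but we want $k^{n - (N+1)/2}$, so in fact the square-root of the first integral is $k^{n/2}\|f\|$ and $k^{-(N+1)}$ should instead be $k^{-(N+1)/2}$ — the discrepancy is resolved by noting that after pulling out $k^{-(N+1)}$ from the pointwise bound, one of the two "halves" $k^{-(N+1)/2}$ recombines with the Gaussian moment, i.e. substituting $w = v$ the effective power is $k^{-(N+1)}\cdot$(bounded) and comparison with $\|f\|$ over the rescaled ball contributes $k^{n/2}$ from the Jacobian and another $k^{(N+1)/2}$ is genuinely not there — so I would reorganize by keeping the $\mathcal{L}^2$ norm over $B(\sqrt k)$ with weight $k|\varphi_k|$ as in Proposition \ref{locrepprop}, where the Jacobian already appears, making the target exponent $k^{n-(N+1)/2}$ come out on the nose.

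The main obstacle, therefore, is bookkeeping the powers of $k$ correctly: tracking how the $k^{-(N+1)}$ from the Taylor remainder of $\Omega$, the $k^{n/2}$ (or $k^n$) from the rescaling Jacobian in passing between $\int_{\bCn} dV$ in the $v$-variable and the intrinsic norm $\|f\|_{\mathcal{L}^2(B,k\varphi)}$, and the convergent Gaussian moments interact, so that the final bound is exactly $\|f\|_{\mathcal{L}^2(B,k\varphi)} O(k^{n-(N+1)/2})$ rather than something off by a power of $\sqrt k$. The cleanest route is to mirror precisely the normalization used in the proof of Proposition \ref{locrepprop} and Lemma \ref{remainofexp}: bound the integrand pointwise, extend to $\bCn$, factor a fixed Gaussian $e^{-|v|^2/2}$ against the polynomial weight (finite moment $C_N$) and a $e^{-|v|^2/2}$ against $|f(v/\sqrt k)|$, apply Cauchy–Schwarz, and identify the resulting $f$-integral with (a constant times) $k^{n}\|f\|^2_{\mathcal{L}^2(B,k\varphi)}$ using \eqref{eqnorms}; the power $k^{-(N+1)}$ then combines with $k^{n/2}$ to give $k^{n/2 - (N+1)}$, and the remaining factor of $k^{(N+1)/2}$ absent from this naive count is supplied because the determinant remainder actually only needs to beat $k^{n-(N+1)/2}$, i.e. we are allowed to be wasteful by $k^{(N+1)/2}$ — indeed $k^{n/2-(N+1)} \le k^{n-(N+1)/2}$ for $k \ge 1$ once $n \ge (N+1)/2$, and in general one simply does not truncate $\Omega$ so aggressively. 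I would double-check the truncation order $2N+1$ against this inequality; if a gap remains, raising the truncation order of $\Omega$ (which costs nothing since $\Omega$ is smooth) closes it, and this is precisely why the hypothesis fixes $M = 2N+1$ for $\Omega$.
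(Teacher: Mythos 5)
Your overall strategy -- Taylor remainder bound on $\Omega-\Omega_{2N+1}$, pointwise bounds on the other factors, Cauchy--Schwarz, then comparison with $\|f\|_{\mathcal{L}^2(B,k\varphi)}$ -- is the same as the paper's, but two steps in your execution are genuinely problematic. First, you discard $\chi_k$ and extend the integral to all of $\bCn$ \emph{before} invoking the norm comparison. That breaks the step
$\int|f(\tfrac{v}{\sqrt k})|^2e^{-|v|^2}dV\lesssim k^n\|f\|^2_{\mathcal{L}^2(B,k\varphi)}$:
the equivalence \eqref{eqnorms} between the weights $e^{-k|z|^2}$ and $e^{-k\varphi(z)}$ holds only on the shrinking ball $B(k^{-1/4-\eps})$, where $kR(z)=O(k^{-\eps})$; on the rest of $B$ the two weights differ by the factor $e^{kR(z)}$, which is unbounded as $k\to\infty$, and your fallback ("the rescaled norm over $B(\sqrt k)$ dominates the norm over $B$") is precisely the comparison that fails. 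The cutoff must be kept until after Cauchy--Schwarz, exactly as in the proof of Lemma \ref{remainofexp}.

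Second, your final power count is tangled around a discrepancy that does not exist. Your naive count gives $C_Nk^{-(N+1)}\cdot k^{n/2}\|f\|\cdot C_N=C_Nk^{\frac n2-(N+1)}\|f\|$, and $k^{\frac n2-(N+1)}\le k^{n-\frac{N+1}{2}}$ holds for every $n\ge 1$ and $N\ge 0$ (it is equivalent to $0\le \frac n2+\frac{N+1}{2}$); the condition "$n\ge(N+1)/2$" you impose is spurious, and no adjustment of the truncation order of $\Omega$ is needed (you also conflate $M$, which truncates the exponential, with the order $2N+1$ used for $\Omega$). The paper sidesteps all of this by using the support constraint at the pointwise stage: on $\supp\chi_k$ one has $|v|\le k^{1/4-\eps}$, hence
$|(\Omega-\Omega_{2N+1})(\tfrac{v}{\sqrt k})|\le C_N|\tfrac{v}{\sqrt k}|^{2N+2}\le C_Nk^{-\frac{N+1}{2}}$
outright, after which the Cauchy--Schwarz manipulation of Lemma \ref{remainofexp} applies verbatim. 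A proof that ends with "if a gap remains, raise the truncation order" is not yet a proof; once you keep $\chi_k$ and either use the support bound as above or simply accept that $k^{\frac n2-(N+1)}\le k^{n-\frac{N+1}{2}}$, the argument closes cleanly.
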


\begin{lemma}[Estimate outside the ball]\label{estoutdisk}
Let $F$ be a holomorphic polynomial. Then the following estimate holds:
\begin{align*}
& \int_{\mathbb{C}^n}\left(1-\chi_k \left(\frac{v}{\sqrt{k}} \right)\right)F \left(\frac{v}{\sqrt{k}} \right)e^{u\bar{v}-|v|^2} \left(\sum_{t=0}^N\sum_{m+j =t} \frac{c_j(u,\oo v)a_m(v,\bar{v})}{\sqrt{k^t}}\right) dV\\
& \hsp \leq C_Nk^n\|F\|_{\mathcal{L}^2(B,k\varphi)}e^{-\frac{1}{32}k^{\frac{1}{2}-2\eps}}.
\end{align*}
\end{lemma}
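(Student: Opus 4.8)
The plan is to exploit the exponential decay of the Gaussian factor $e^{-|v|^2}$ against the polynomial growth of everything else, using that on the support of $1-\chi_k(\tfrac{v}{\sqrt{k}})$ we have $|v| \geq \tfrac12 k^{1/4-\eps}$. First I would record that the bracketed sum $\sum_{t=0}^N \sum_{m+j=t} c_j(u,\oo v) a_m(v,\bar v) k^{-t/2}$, together with $F(\tfrac{v}{\sqrt k})$, is a polynomial in $u, v, \bar v$ (with coefficients bounded uniformly in $k$, since the negative powers of $\sqrt k$ only help); call the whole integrand apart from $e^{u\bar v - |v|^2}$ by $P(u,v,\bar v)$, so $|P| \leq C_N (1+|u|)^{d}(1+|v|)^{d}$ for some degree $d = d(N,F,n)$ and constant $C_N$. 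Since $|u|\leq 1$ this is $\leq C_N(1+|v|)^d$.

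Next I would control the remaining exponential. Writing $u\bar v - |v|^2 = -|v-u|^2 + |u|^2 - \bar u v + u\bar v - |u|^2$ is not quite clean because of the non-Hermitian $u\bar v$ term; instead I would simply bound $|e^{u\bar v}| = e^{\Real(u\bar v)} \leq e^{|u||v|} \leq e^{|v|}$ (using $|u|\leq 1$), and then split $e^{-|v|^2} = e^{-|v|^2/2}\,e^{-|v|^2/2}$, absorbing the first half to kill $e^{|v|}(1+|v|)^d$ — indeed $e^{|v|}(1+|v|)^d e^{-|v|^2/2}$ is bounded by an absolute constant depending only on $d$ — and using the second half $e^{-|v|^2/2} \leq e^{-|v|^2/4}\,e^{-|v|^2/4}$ to extract one factor $e^{-\frac14 |v|^2} \leq e^{-\frac14 \cdot \frac14 k^{1/2-2\eps}} = e^{-\frac{1}{16}k^{1/2-2\eps}}$ on the support of $1-\chi_k$, and leaving $e^{-|v|^2/4}$ to make the residual $\mathbb C^n$-integral converge to a finite constant. (The constant $\tfrac{1}{32}$ in the statement gives a little slack, so the exact bookkeeping of which fraction goes where is not delicate.) This yields
\begin{equation*}
\left| \int_{\bCn} (1-\chi_k(\tfrac{v}{\sqrt k})) P(u,v,\bar v) e^{u\bar v - |v|^2}\, dV \right| \leq C_N' e^{-\frac{1}{32}k^{1/2-2\eps}} \int_{\bCn} e^{-|v|^2/4}\, dV \leq C_N'' e^{-\frac{1}{32}k^{1/2-2\eps}}.
\end{equation*}

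Finally I would restore the $k^n \|F\|_{\mathcal L^2(B,k\varphi)}$ factor claimed in the statement: this is just a matter of not throwing away the normalization. The $\|F\|_{\mathcal L^2(B,k\varphi)}$ should enter because the actual object being estimated in the application is the reproducing-property error for a genuine holomorphic $f$, and one replaces $F$ by its Taylor coefficients, each of which is controlled — via the mean-value / sub-mean-value property on the Bargmann–Fock side and the norm comparison \eqref{eqnorms} — by $k^{n/2}\|F\|_{\mathcal L^2(B,k\varphi)}$ up to constants; so pulling out $k^n\|F\|_{\mathcal L^2(B,k\varphi)}$ and retaining the stretched-exponential decay in $k$ is exactly the asserted inequality. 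The main obstacle, such as it is, is purely bookkeeping: keeping track of the polynomial degree $d$ as $N$ grows (it does, but only polynomially, and any fixed power of $(1+|v|)$ is crushed by a fixed fraction of the Gaussian) and making sure the $k^{-t/2}$ weights and the $k^n$ normalization are handled consistently. There is no analytic subtlety — the stretched-exponential decay $e^{-ck^{1/2-2\eps}}$ beats every polynomial power of $k$, so this term is negligible compared to the $O(k^{n-(N+1)/2})$ errors of Lemmas \ref{remainofexp} and \ref{remofdet} for every $N$.
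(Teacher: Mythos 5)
There is a genuine gap here, and it is exactly the point the paper flags in the first lines of its own proof: once $\chi_k$ is replaced by $1-\chi_k$, the integrand lives on $|v|\ge \tfrac12 k^{1/4-\eps}$, where the comparison \eqref{eqnorms} between the weights $e^{-|v|^2}$ and $e^{-k\varphi(v/\sqrt k)}$ is no longer valid, so there is no direct way to convert your pointwise bounds into the factor $\|F\|_{\mathcal L^2(B,k\varphi)}$ with a constant independent of $F$. Your estimate $|P|\le C_N(1+|u|)^d(1+|v|)^d$ has $d$ and the implied constant depending on the degree and coefficients of $F$, not on $\|F\|_{\mathcal L^2(B,k\varphi)}$; and this dependence cannot be repaired afterwards by expanding $F$ into monomials controlled by its norm, because the exterior Gaussian moments grow factorially in the degree: for $F(z)=z_1^L$ one has $\int_{\mathbb C}|v/\sqrt k|^{2L}e^{-|v|^2}\,dV=k^{-L}L!\to\infty$ as $L\to\infty$ for fixed $k$, while $\|z_1^L\|_{\mathcal L^2(B,k\varphi)}$ stays bounded. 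Uniformity in $F$ is not optional bookkeeping: the lemma is applied to $F=f_L$ with $L\to\infty$ in the proof of Proposition \ref{locrepprop}, so a constant depending on $\deg F$ makes the statement useless there.

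The missing idea is the integration by parts that localizes the integral back onto a bounded annulus. The paper writes $e^{\bar v(u-v)}\,dV=-\tfrac1n\,\dbar\bigl(\sum_i e^{\bar v(u-v)}\tfrac{1}{u^i-v^i}\,d\widehat{\oo V^i}\bigr)$ and integrates by parts $2N+1$ times. Since $F$ is holomorphic and the total $\bar v$-degree of $\sum_{m+j\le N}c_j a_m$ is at most $2N$, at least one $\dbar$ must fall on $1-\chi_k$, so the resulting integrand is supported on the annulus $\tfrac12 k^{1/4-\eps}\le|v|\le k^{1/4-\eps}$. There \eqref{eqnorms} applies again, and Cauchy--Schwarz produces $\bigl(\int|F(v/\sqrt k)|^2e^{-|v|^2}dV\bigr)^{1/2}\le Ck^{n/2}\|F\|_{\mathcal L^2(B,k\varphi)}$ with $C$ independent of $F$, while the second factor, containing $e^{(|u|^2-|u-v|^2)/2}/(u-v)^I$ with $|u-v|\ge\tfrac14 k^{1/4-\eps}$, supplies the decay $e^{-\frac{1}{32}k^{1/2-2\eps}}$. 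Your Gaussian-tail mechanism for the stretched-exponential decay is fine and essentially equivalent to the paper's; what it cannot deliver is the $F$-uniform $\mathcal L^2$ factor, which is the actual content of the lemma.
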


Given the results above, we prove Proposition \ref{locrepprop}.
 
\begin{proof}[Proof of Proposition \ref{locrepprop}] By Proposition \ref{coeffexist},
\begin{equation*}
F \left(\frac{u}{\sqrt{k}} \right) = \int_{\bCn}F \left(\frac{v}{\sqrt{k}}\right)e^{u\cdot\bar{v}-|v|^2}\left(\sum_{t=0}^N\sum_{m+j=t}\frac{c_j(u,\bar{v})a_m(v,\bar{v})}{\sqrt{k^t}}\right)dV,
\end{equation*}
for all $N\geq 0$ and all holomorphic polynomials $F$.  
We then split the above to two pieces.
\begin{align*}
F \left(\frac{u}{\sqrt{k}} \right) &= \int_{\bCn}\left(1-\chi_k \left(\frac{v}{\sqrt{k}} \right) \right)F \left(\frac{v}{\sqrt{k}}\right)e^{u\cdot\bar{v}-|v|^2}\left(\sum_{t=0}^N\sum_{m+j=t}\frac{c_j(u,\bar{v})a_m(v,\bar{v})}{\sqrt{k^t}}\right) dV \\
&+ \int_{\bCn}\chi_k \left(\frac{v}{\sqrt{k}} \right)F \left(\frac{v}{\sqrt{k}}\right)e^{u\cdot\bar{v}-|v|^2}\left(\sum_{t=0}^N\sum_{m+j=t}\frac{c_j(u,\bar{v})a_m(v,\bar{v})}{\sqrt{k^t}}\right) dV.
\end{align*}
The first integral is bounded above by $C_Nk^n\|f\|_{\mathcal{L}^2(B,k\varphi)}e^{-\frac{1}{32}k^{\frac{1}{2}-\eps}}$ from Lemma \ref{estoutdisk}.  For the second integral, we note that since $M=[\frac{N+1}{2\eps}]+1 > N/4$, and $|u|<1$, we have
\begin{align*}
\left| \left(\sum_{t=0}^N\sum_{m+j=t}\frac{c_j(u,\bar{v})a_m(v,\bar{v})}{\sqrt{k^t}}\right) - \left(\sum^N_{j=0}\frac{c_j(u,\bar{v})}{\sqrt{k^j}}\right)\left(e^{-kR_{2N+5}\left(\frac{v}{\sqrt{k}}\right)} \right)_{M}\Omega_{2N+1} \right| \leq C_N k^{-\frac{N+1}{2}}|v|^{4(M+N+1)} .
\end{align*}
Then by applying this to the second integral, and using the Cauchy-Schwarz inequality we get
\begin{align*}
& C_Nk^{-\frac{N+1}{2}}\left|\int_{\mathbb{C}^n}\chi_k\left(\tfrac{v}{\sqrt{k}}\right) F\left(\tfrac{v}{\sqrt{k}}\right)|v|^{4(M+N+1)}e^{u\oo v - |v|^2}dV\right| \\
&\leq C_Nk^{-\frac{N+1}{2}}\left(\int_{\mathbb{C}^n}\chi_k\left(\tfrac{v}{\sqrt{k}}\right)\left|F\left(\tfrac{v}{\sqrt{k}}\right)\right|^2e^{-|v|^2}dV\right)^{\tfrac{1}{2}}\left(\int_{\mathbb{C}^n}\chi_k\left(\tfrac{v}{\sqrt{k}}\right)\left| |v|^{4(M+N+1)}e^{u\bar{v}-\tfrac{|v|^2}{2}}\right|^2dV\right)^{\tfrac{1}{2}} \\
&\leq C_N\|F\|_{\mathcal{L}^2(B,k\varphi)}k^{n-\frac{N+1}{2}}.
\end{align*}
Hence we obtain the estimate,
\begin{align*}
F \left(\frac{u}{\sqrt{k}} \right) &= \int_{\bCn} \chi_k \left(\frac{v}{\sqrt{k}} \right ) F \left(\frac{v}{\sqrt{k}}\right)e^{u\cdot\bar{v}-|v|^2}\left(\sum^N_{j=0}\frac{c_j(u,\bar{v})}{\sqrt{k}}\right)\left(e^{-kR_{2N+5}\left(\frac{v}{\sqrt{k}}\right)} \right)_{M}\Omega_{2N+1}dV \\ &+ O(k^{n-\frac{N+1}{2}})\|F\|_{L^2(B,k\varphi)}.
\end{align*} 
Now by applying Lemma \ref{remofdet} and Lemma \ref{remainofexp}, we have
\begin{align*}
&  \int_{\mathbb{C}^n}\chi_k \left(\tfrac{v}{\sqrt{k}} \right)F \left(\tfrac{v}{\sqrt{k}} \right)e^{u\bar{v}-|v|^2}\left(\sum_{j =0}^N \tfrac{c_j(u,\oo v)}{\sqrt{k^j}}\right) \left(e^{-kR_{2N+5}\left(\tfrac{v}{\sqrt{k}} \right)} \right)_{M} \Omega_{2N+1}\left( \tfrac{v}{\sqrt{k}} \right) dV \\
&=  \left\langle \chi_{k}\left(\tfrac{v}{\sqrt{k}}\right) F \left(\tfrac{v}{\sqrt{k}}\right),e^{\bar{u}\cdot v}\left(\sum_{j}^N \tfrac{c_j\left(\bar{u}, v \right)}{\sqrt{k^j}} \right)\right\rangle_{\mathcal{L}^2(B(\sqrt{k}),k\varphi(\frac{v}{\sqrt{k}}))}  +  \|F\|_{\mathcal{L}^2(B,k\varphi)}O\left(k^{n-\frac{N+1}{2}} \right).
\end{align*}
We can extend to arbitrary $f \in H^0(B)$ by putting $F=f_L$, letting $L\to \infty$, and using the uniform convergence of $f_L$.
The result follows.
\end{proof}

We end this section with the proofs of Lemma \ref{remainofexp}, Lemma \ref{remofdet}, and Lemma \ref{estoutdisk}.

\begin{proof}[Proof of Lemma \ref{remainofexp}]
First note that since $|v| \leq k^{\frac{1}{4}-\eps}$ we have
\begin{equation*}
kR \left(\frac{v}{\sqrt{k}} \right) =  O(k^{-\eps}).
\end{equation*}
We regroup the quantity
\begin{equation}\label{expestimate}
e^{-kR} - e^{-kR_{2N+5}} = e^{-kR}\left(1-e^{k(R-R_{2N+5})} \right).
\end{equation}
By Taylor expansion
\begin{align*}
k \left|(R-R_{2N+5})\left(\frac{v}{\sqrt{k}} \right) \right|  &\leq k\underset{|\xi| \leq \frac{|v|}{\sqrt{k}}}{\underset{|\alpha|=2N+6}{\sup}}\left|\frac{D^{\alpha}R(\xi)}{(\alpha)!}\right|\left|\frac{v}{\sqrt{k}}\right|^{\alpha} \\
& \leq C_Nk\left(\frac{|v|}{\sqrt{k}}\right)^{2N+6}\\
& \leq C_N k^{- \frac{N+1}{2}}.
\end{align*}
Applying the above to \eqref{expestimate}, we have
\begin{equation}\label{expest1}
\left|e^{-kR\left(\frac{v}{\sqrt{k}} \right)} - e^{-kR_{2N+5}\left(\frac{v}{\sqrt{k}} \right)} \right| \leq C_N k^{- \frac{N+1}{2}}.
\end{equation}

Next we consider the difference
\begin{equation}\label{expest2}
\left|e^{-kR_{2N+5}\left(\frac{v}{\sqrt{k}} \right)} - \left(e^{-kR_{2N+5}\left(\frac{v}{\sqrt{k}} \right)} \right)_{M} \right|,
\end{equation}
where $M$ is a fixed constant such that $M \geq \frac{N+1}{2\eps}$.

By $\left(e^{-kR_{2N+5}\left(\frac{v}{\sqrt{k}} \right)} \right)_{M}$ we mean to truncate as
\begin{equation*}
\sum_{j=0}^{M} \frac{1}{j!}\left(-kR_{2N+5}\left(\frac{v}{\sqrt{k}} \right)\right)^j.
\end{equation*}
Hence we have an estimate
\begin{align*}
\left|e^{-kR_{2N+5}\left(\frac{v}{\sqrt{k}} \right)} - \left(e^{-kR_{2N+5}\left(\frac{v}{\sqrt{k}} \right)} \right)_{M} \right| &\leq \sup_{|x| \leq |-kR_{2N+5}\left(\frac{v}{\sqrt{k}} \right)|} \frac{|x|^{M+1}}{(M+1)!} \\
&\leq C k^{-\eps M+1} \\
&\leq Ck^{-\frac{N+1}{2}}.
\end{align*}
Combining \eqref{expest1} and \eqref{expest2}, we have
\begin{equation*}
\left|e^{-kR \left(\frac{v}{\sqrt{k}} \right)} - \left(e^{-kR_{2N+5} \left(\frac{v}{\sqrt{k}} \right)} \right)_{M} \right| \leq C_N k^{-\frac{N+1}{2}}.
\end{equation*}
Applying our estimate directly to the integral,
\begin{align*}
& \left| \int_{\mathbb{C}^n}\chi_k \left(\tfrac{v}{\sqrt{k}} \right)f \left(\tfrac{v}{\sqrt{k}} \right)e^{u\bar{v}-|v|^2}\left(\sum_{j =0}^N \tfrac{c_j(u,\oo v)}{\sqrt{k^j}}\right) \left( e^{-kR\left(\frac{v}{\sqrt{k}} \right)} - \left(e^{-kR_{2N+5}\left(\frac{v}{\sqrt{k}} \right)}\right)_{M}\right) \Omega \left( \tfrac{v}{\sqrt{k}} \right) dV \right| \\
&\leq C_Nk^{-{\frac{N+1}{2}}}\int_{\mathbb{C}^n} \left| \chi_k \left(\tfrac{v}{\sqrt{k}} \right)f \left(\tfrac{v}{\sqrt{k}} \right)e^{-\frac{|v|^2}{2}}\left(\sum_{j =0}^N \tfrac{c_j(u,\oo v)}{\sqrt{k^j}}\right) \Omega \left( \tfrac{v}{\sqrt{k}} \right) e^{u\bar{v}-\frac{|v|^2}{2}}\right| dV \\
&\leq  C_N k^{-{\frac{N+1}{2}}} \left( \int_{\mathbb{C}^n}  \chi_k \left( \tfrac{v}{\sqrt{k}} \right) \left| f \left(\tfrac{v}{\sqrt{k}} \right)\right|^2 e^{-|v|^2} dV \right)^{\frac{1}{2}} \left(\int_{\mathbb{C}^n} \chi_k \left(\tfrac{v}{\sqrt{k}} \right) \left|  \left( \sum_{j =0}^N \tfrac{c_j(u,\oo v)}{\sqrt{k^j}}\right) \Omega \left( \tfrac{v}{\sqrt{k}} \right)  e^{u\bar{v}-\frac{|v|^2}{2}}\right|^2 dV\right)^{\frac{1}{2}} \\
&\leq  C_N k^{-{\frac{N+1}{2}}} \left( \int_{\mathbb{C}^n}  \chi_k \left( \tfrac{v}{\sqrt{k}} \right) \left| f \left(\tfrac{v}{\sqrt{k}} \right)\right|^2 e^{-k \vp \left( \frac{v}{\sqrt{k}} \right)} dV \right)^{\frac{1}{2}} \left(\int_{\mathbb{C}^n} \chi_k \left(\tfrac{v}{\sqrt{k}} \right) \left|  \left( \sum_{j =0}^N \tfrac{c_j(u,\oo v)}{\sqrt{k^j}}\right) \Omega \left( \tfrac{v}{\sqrt{k}} \right)  e^{u\bar{v}-\frac{|v|^2}{2}}\right|^2 dV\right)^{\frac{1}{2}} \\
&\leq C_N \|f\|_{\mathcal{L}^2(B,k\varphi)}k^{n-\frac{N+1}{2}}.
\end{align*}
The result follows.
\end{proof}

\begin{proof}[Proof of Lemma \ref{remofdet}]
We first observe the following estimate
\begin{equation*}
\left| \left( \Omega - \Omega_{2N+1} \right) \left( \frac{v}{ \sqrt{k}}  \right) \right| \leq \underset{|\xi| \leq \left| \frac{v}{\sqrt{k}} \right|}{\sup_{|\ga| = 2N+2}} \left| \frac{D^{\ga} \Omega(\xi)}{\ga !} \right| \left| \frac{v}{\sqrt{k}} \right|^{2N+2} \leq C_N k^{- \frac{N+1}{2}}.
\end{equation*}
Using the above estimate with a similar manipulation as Lemma \ref{remainofexp} we conclude \eqref{eq:remofdet1}.
\end{proof}

\begin{proof}[Proof of Lemma \ref{estoutdisk}] Up this point the estimate \eqref{eqnorms} has been crucial in all of our estimates. However when $\chi_k$ is replaced by $1- \chi_k$, the integrand is not supported in $B(k^{1/4 -\eps})$, and hence \eqref{eqnorms} is not correct anymore. However, an application of integration by parts resolves this issue as we perform below. 

First note that since $|u| \leq 1$ and $|v| \geq \frac{1}{2}k^{\frac{1}{4}-\eps}$, we have
$|u - v| \geq \frac{1}{4}k^{\frac{1}{4}-\eps}$.  Next we use the identity
\begin{equation*}
\dbar \left( \sum_i e^{\bar{v}(u-v)} \frac{1}{u^i-v^i} d\widehat{\oo V^i} \right) = -n e^{\bar{v}(u-v)}dV,
\end{equation*}
where $ d\widehat{\oo V^i} := \left(\frac{\sqrt{-1}}{2\pi}\right)^ndv^1\wedge d\oo v^1 \wedge\ldots \wedge  dv^i \wedge \widehat{d\oo v^i}\wedge \ldots \wedge dv^n \wedge d\oo v^n$.
Integrating by parts, we have
\begin{align*}
&\int_{\mathbb{C}^n}\left(1-\chi_k \left(\frac{v}{\sqrt{k}} \right)\right)F \left(\frac{v}{\sqrt{k}} \right)e^{\bar{v}(u-v)} \left(\sum_{t=0}^N\sum_{m+j =t} \frac{c_j(u,\oo v)a_m(v,\bar{v})}{\sqrt{k^t}}\right) dV \\
&= -\frac{1}{n}\int_{\mathbb{C}^n}\left(1-\chi_k \left(\frac{v}{\sqrt{k}} \right)\right)F \left(\frac{v}{\sqrt{k}} \right)  \left(\sum_{t=0}^N\sum_{m+j =t} \frac{c_j(u,\oo v)a_m(v,\bar{v})}{\sqrt{k^t}}\right)\dbar \left( \sum_i e^{\bar{v}(u-v)} \frac{1}{u^i-v^i} d\widehat{\oo V^i}\right)\\
&= -\frac{1}{n}\int_{\mathbb{C}^n}F \left(\frac{v}{\sqrt{k}} \right) \sum_{i}\dbar_{i}\left( \left(1-\chi_k \left(\frac{v}{\sqrt{k}} \right)\right) \left(\sum_{t=0}^N\sum_{m+j =t} \frac{c_j(u,\oo v)a_m(v,\bar{v})}{\sqrt{k^t}}\right)  \right) e^{\bar{v}(u-v)} \frac{1}{u^i-v^i} dV.
\end{align*}
Iterating the above integration by parts $2N$ times we obtain
\begin{equation*}
= \frac{(-1)^{2N+1}}{n^{2N+1}}\int_{\mathbb{C}^n} F \left(\frac{v}{\sqrt{k}} \right)\underset{|I|={2N+1}}{\sum_{ I = (i_1,\ldots ,i_{2N+1})}} \dd_{\bar{I}} \left(\left(1-\chi_k \left(\frac{v}{\sqrt{k}} \right)\right)
\sum_{t=0}^N \sum_{m+j =t} \frac{c_j(u,\oo v)a_m(v,\bar{v})}{\sqrt{k^t}} \right) \frac{e^{\bar{v}(u-v)}}{(u-v)^I} dV.
\end{equation*}
Since the degrees of $a_m$ and $c_j$ are $2m$ and $2j$ respectively, always one differentiation is applied to $1-\chi_k$.  Therefore, the integrand is supported on the annulus $\frac{1}{2}k^{\frac{1}{4}-\eps} \leq |v| \leq k^{\frac{1}{4}-\eps}$.
The above integral is then bounded above by
\begin{align*}
&\left(\int_{\frac{1}{2}k^{\frac{1}{4}-\eps} \leq |v| \leq k^{\frac{1}{4}-\eps}} \left| F \left(\frac{v}{\sqrt{k}} \right) \right|^2 e^{-|v|^2}dV\right)^{\frac{1}{2}} \\
&\times \left( \int_{\frac{1}{2}k^{\frac{1}{4}-\eps} \leq |v| \leq k^{\frac{1}{4}-\eps}} \left| \underset{|I|={2N+1}}{\sum_{ I = (i_1,\ldots ,i_{2N+1})}} \dd_{\bar{I}} \left(\left(1-\chi_k \left(\frac{v}{\sqrt{k}} \right)\right)
\sum_{t=0}^N \sum_{m+j =t} \frac{c_j(u,\oo v)a_m(v,\bar{v})}{\sqrt{k^t}} \right) \frac{e^{\frac{|u|^2-|u-v|^2}{2}}}{(u-v)^I} \right|^2 dV \right)^{\frac{1}{2}} \\
&\leq C_N k^n\|F\|_{L^2(B,k\varphi)}e^{-\frac{1}{32}k^{\frac{1}{2}-2\eps}}.
\end{align*}
The result follows
\end{proof}

\section{Local to global}\label{localtoglobal}

The norm of $K$ as a section of the bundle $L^{\otimes k} \ten \bar{L}^{\ten k}$ is the \textit{Bergman function $\mathcal{B}$}. Hence in local coordinates
\begin{equation*}
\mathcal{B}(x) = |K(x,x)|_{h^k} = |\tilde{K}(x,x)|e^{-k\varphi(x)},
\end{equation*}
where $\tilde{K}(x,x)$ is the coefficient function of the Bergman kernel with respect to the frame $e_L^{\ten k} \ten \bar{e_L}^{\ten k}$. We also have an extremal characterization of the Bergman function given by
\begin{equation}\label{extremal}
\mathcal{B}(x) = \sup\limits_{\|s\|_{\mathcal{L}^2} \leq 1} |s(x)|_{h^k}^2,
\end{equation}
where $s \in H^0(M,L^{\ten k})$.

\begin{lemma}[Uniform upper bound on Bergman function]\label{unifupbound}
 There exists $C$ dependent on $M$, and independent of $k$ and $x$ such that
\begin{equation*}
\mathcal{B}(x)\leq Ck^n.
\end{equation*}

\begin{proof}
We use the extremal characterization of the Bergman function \eqref{extremal}. On the compact manifold $M$, we fix a finite coordinate cover $\{U\}$ and also fix a coordinate $(z_1,\cdots,z_n)$ in $U$. For each $U$, we have a local K\"ahler potential $\varphi(z)$. We can assume that $U=B(0,2)$, and $\sup_{z\in B(0,2)}|D^2\varphi(z)|\leq C$, i.e. the second derivatives are uniformly bounded. Since $\varphi$ is plurisubharmonic, we can assume the volume form $dV_g=(\tfrac{\sqrt{-1}}{2\pi}\partial\bar{\partial}\varphi)^n(z)$ is equivalent to $dV_E(z)$ in $B(0,2)$.
\begin{align*}
1
&\geq \int_{B(z_0,\frac{1}{\sqrt{k}})} |\tilde{s}(z)|^2e^{-k\varphi(z)}dV_g\\
&\geq \frac{1}{C_1}\int_{B(z_0,\frac{1}{\sqrt{k}})} |\tilde{s}(z)|^2e^{-k\varphi(z)}dV_E\\
&\geq \frac{1}{C_1}\exp(-\sup_{B(0,2)}|D^2\varphi|)\int_{B(z_0,\frac{1}{\sqrt{k}})} |\tilde{s}(z)|^2e^{-k\varphi(z_0)-k\varphi_z(z_0)(z-z_0)-k\varphi_{\bar{z}}(z_0)\overline{(z-z_0)}}dV_E\\
&=\frac{1}{C_2} e^{-k\varphi(z_0)}\int_{B(z_0,\frac{1}{\sqrt{k}})} |\tilde{s}(z)e^{-k\varphi_z(z_0)(z-z_0)}|^2dV_E.
\end{align*}
Since $\tilde{s}(z)e^{-k\varphi_z(z_0)(z-z_0)}$ is holomorphic, by the mean-value inequality we have
\begin{align*}
&\frac{1}{C_2}e^{-k\varphi(z_0)}\int_{B(z_0,\frac{1}{\sqrt{k}})} |\tilde{s}(z)e^{-k\varphi_z(z_0)(z-z_0)}|^2dV_E\\
&\geq \frac{1}{C_3k^n} e^{-k\varphi(z_0)}|\tilde{s}(z_0)|^2.
\end{align*}
So we have $$e^{-k\varphi(z_0)}|\tilde{s}(z_0)|^2\leq C_3 k^n, $$ where $C_3$ is uniform for any $z_0\in B(0,1)$ and any $s\in H^0(M,L^k)$.
Taking the supremum over all such $s$ and a standard finite cover argument yields the desired result.
\end{proof}
\end{lemma}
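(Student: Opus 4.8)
The statement to prove is that the Bergman function $\mathcal{B}(x) = |K(x,x)|_{h^k}$ is bounded by $Ck^n$ uniformly in $k$ and $x$, and the author has already written out the argument. My approach would follow exactly this route: exploit the extremal characterization \eqref{extremal}, which reduces the problem to showing that every normalized holomorphic section $s$ satisfies $|s(x_0)|^2_{h^k} \leq Ck^n$ with $C$ independent of $k$, $x_0$, and $s$. The key idea is that a holomorphic section, in a local trivialization, gives a holomorphic function $\tilde s$, and after absorbing the linear part of the K\"ahler potential at $x_0$ into a holomorphic exponential factor we may apply the sub-mean-value inequality for holomorphic functions on a ball of radius $1/\sqrt k$.

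The steps, in order, are: (1) Cover the compact manifold $M$ by finitely many coordinate charts, each identified with $B(0,2)\subset \bCn$, on which the local potential $\varphi$ has uniformly bounded second derivatives and the volume form $\omega^n/n!$ is uniformly comparable to Euclidean Lebesgue measure $dV_E$ (this uses compactness and positivity of $\omega$). (2) Fix $x_0$ in the central ball $B(0,1)$ of such a chart and a unit-norm section $s$; pass to the coefficient function $\tilde s$ in the frame $e_L^{\otimes k}$ and restrict the global $\mathcal{L}^2$ bound to the small ball $B(x_0, 1/\sqrt k)$. (3) On that small ball, replace $\varphi(z)$ in the exponential weight by its first-order Taylor data at $x_0$: since $|D^2\varphi|$ is bounded and the ball has radius $1/\sqrt k$, the error $k(\varphi(z) - \varphi(x_0) - \text{linear part})$ is $O(1)$, so $e^{-k\varphi(z)}$ and $e^{-k\varphi(x_0) - k\varphi_z(x_0)(z-x_0) - k\varphi_{\bar z}(x_0)\overline{(z-x_0)}}$ differ by a bounded factor. (4) Recognize $g(z) := \tilde s(z) e^{-k\varphi_z(x_0)(z-x_0)}$ as holomorphic, apply the mean-value inequality $|g(x_0)|^2 \leq \frac{C}{\text{vol}(B(x_0,1/\sqrt k))}\int_{B(x_0,1/\sqrt k)} |g|^2 \, dV_E$, noting $\text{vol} \sim k^{-n}$; this yields $e^{-k\varphi(x_0)}|\tilde s(x_0)|^2 \leq C_3 k^n$, i.e. $|s(x_0)|^2_{h^k}\leq C_3k^n$. (5) Take the supremum over $s$ and use the finite cover to conclude uniformity over all of $M$.

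The only genuinely substantive point — and the place where care is needed — is step (3): controlling the discrepancy between the true weight $e^{-k\varphi}$ and its linearized surrogate on a ball shrinking at rate $1/\sqrt k$. This is precisely where the scaling $1/\sqrt k$ is forced: on a ball of radius $r$, the quadratic remainder of $k\varphi$ is of size $kr^2 \sup|D^2\varphi|$, which stays bounded exactly when $r = O(1/\sqrt k)$, and the bounded-factor comparison then feeds directly into the mean-value step. Everything else is routine: the finite subcover, the equivalence of volume forms, and the mean-value inequality for holomorphic functions are all standard. I would present the argument essentially as written, perhaps emphasizing at the outset that the constant at each stage depends only on the finitely many charts and hence on $M$ alone.
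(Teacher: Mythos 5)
Your proposal is correct and follows the paper's own argument step for step: the extremal characterization, the finite chart cover with uniform bounds on $D^2\varphi$ and comparability of volume forms, the linearization of $k\varphi$ on the ball of radius $1/\sqrt{k}$ with the linear part absorbed into a holomorphic exponential, the mean-value inequality, and the final supremum plus covering argument. Your added remark correctly identifies the quadratic-remainder control as the point forcing the $1/\sqrt{k}$ scale; nothing further is needed.
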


Let $K(x,y) = K_{y}(x)$ be the global Bergman kernel of $H^0(M,L^{\ten k})$.  We view $K_y(x)$ as a section of $L^k \otimes \oo L_y^k$.  We shall use $ \tilde{K}(x, y)$ for the local representation of $K(x,y)$ with respect to the frame $e_L(x)^{\ten k} \ten \bar{e_L}(y)^{\ten k}$.

\begin{thm}[Local to global] The following equality relates the truncated local Bergman kernel $$K_N^{loc}\left(\frac{u}{\sqrt{k}}, \frac{v}{\sqrt{k}}\right)  =k^n e^{u\cdot\bar{v}}\sum_{j=0}^N \frac{c_j(u, \oo v)}{\sqrt{k}^j}$$ to the global Berman kernel $\tilde K$.
\begin{equation*}
\tilde K \left(\frac{u}{\sqrt{k}},\frac{v}{\sqrt{k}} \right) = K_{N}^{loc}\left(\frac{u}{\sqrt{k}},\frac{v}{\sqrt{k}} \right) + O \left(k^{2n-\frac{N+1}{2}} \right).
\end{equation*}

\begin{proof}
Fix $u, v \in B$. We apply the local reproducing property to the global Bergman kernel $f(w)=\tilde K_{\frac{u}{\sqrt{k}}}(w)=\tilde K (w, \frac{u}{\sqrt{k}})$
\begin{equation*}
\tilde{K} \left( \frac{v}{\sqrt{k}},\frac{u}{\sqrt{k}} \right) = \left\langle \chi_k(w)\tilde{K} \left(w,\frac{u}{\sqrt{k}} \right) , K_N^{loc} \left(w,\frac{v}{\sqrt{k}} \right)\right\rangle_{\mathcal{L}^2(B,k\varphi(w))} +  O \left(k^{n-\frac{N+1}{2}} \right)\|\tilde{K}_{\frac{u}{\sqrt{k}}}\|_{\mathcal{L}^2(B,k\varphi)}.
\end{equation*}
By the reproducing property, we obtain from Lemma \ref{unifupbound},
\begin{equation*}
\|\tilde{K}_{\frac{u}{\sqrt{k}}}\|^2_{\mathcal{L}^2(B,k\varphi)}\leq \|K_{\frac{u}{\sqrt{k}}}\|^2_{\mathcal{L}^2}=\tilde{K}\left(\frac{u}{\sqrt{k}},\frac{u}{\sqrt{k}} \right)=B \left(\frac{u}{\sqrt{k}} \right)e^{k\varphi_k(u)} \leq Ck^n,
\end{equation*}
where $K_{\frac{u}{\sqrt{k}}}(w)$ means section with respect to $w$ and local coefficient function with respect to $u$. Thus we have
\begin{equation*}
\tilde{K} \left(\frac{v}{\sqrt{k}},\frac{u}{\sqrt{k}} \right) = \left\langle \chi_k(w)\tilde{K} \left(w,\frac{u}{\sqrt{k}} \right) , K_N^{loc}\left(w,\frac{v}{\sqrt{k}} \right) \right\rangle_{\mathcal{L}^2(B,k\varphi(w))} + O\left(k^{2n-\frac{N+1}{2}}\right).
\end{equation*}
We next estimate the difference of the local Bergman kernel with the projection of the local kernel.
\begin{align*}
\begin{split}
g_{k,v} \left(\frac{w}{\sqrt{k}} \right)
:=& \chi_k \left(\frac{w}{\sqrt{k}} \right) K^{loc}_N \left(\frac{w}{\sqrt{k}}, \frac{v}{\sqrt{k}}\right) - \overline{\left\langle \chi_k(x)\tilde{K} \left(x,\frac{w}{\sqrt{k}} \right) , K_N^{loc}\left(x,\frac{v}{\sqrt{k}} \right)\right\rangle}_{\mathcal{L}^2(B,k\varphi(x))}\\
= & \chi_k \left( \frac{w}{\sqrt{k}} \right) K_N^{loc} \left(\frac{w}{\sqrt{k}}, \frac{v}{\sqrt{k}} \right) - \left\langle \chi_k(x)K_N^{loc}\left(x,\frac{v}{\sqrt{k}} \right),{\tilde K} \left(x, \frac{w}{\sqrt{k}} \right)\right\rangle_{\mathcal{L}^2(B,k\varphi(x))}.
\end{split}
\end{align*}
We can regard $g_{k,v}$ as a global section of $L^{\ten k}$ because of the cut-off function $\chi_k$. Since
\begin{equation*}
\left\langle \chi_k K_{N, \frac{v}{\sqrt{k}}}^{loc},{K}_{\frac{u}{\sqrt{k}}} \right\rangle_{\mathcal{L}^2} = \mathcal{P}_{H^0}\left(\chi_k K_{N,\frac{v}{\sqrt{k}}}^{loc}\right),
\end{equation*}
where $\mathcal{P}_{H^0}$ is the Bergman projection and $g_{k,v}$ is the $\mathcal{L}^2$-minimal solution to
\begin{equation*}
\dbar g_{k,v} = \dbar \left(\chi_k K_{N,\frac{v}{\sqrt{k}}}^{loc} \right).
\end{equation*}
Now we estimate $\dbar (\chi_k K_{N,\tfrac{v}{\sqrt{k}}}^{loc})$.
\begin{align*}
\left. \dbar \left( \chi_k K_{N, \frac{v}{\sqrt{k}}}^{loc} \right) \right|_{\frac{w}{\sqrt{k}}} &= \left. \left( \dbar \left( \chi_k \right) K_{N,\frac{v}{\sqrt{k}}}^{loc}   + \chi_k \dbar\left( K_{N,\tfrac{v}{\sqrt{k}}}^{loc}\right) \right) \right|_{\frac{w}{\sqrt{k}}} \\
&=\left. \dbar\left( \chi_k \right) K_{N,\tfrac{v}{\sqrt{k}}}^{loc}  \right|_{\frac{w}{\sqrt{k}}}.
\end{align*}
We note that $\dbar\left( K_{N,\tfrac{v}{\sqrt{k}}}^{loc}\right) =0$ because $K_N^{loc}$ is holomorphic.  The term $\dbar\left( \chi_k \right)$ on the right hand side ensures $|w-v|\geq \frac{1}{4}k^{\frac{1}{4}-\eps}$, and $|w| \leq k^{\frac{1}{4}-\eps}$. Furthermore, since $K_{N,\frac{v}{\sqrt{k}}}^{loc}(\frac{w}{\sqrt{k}}) =  O \left(e^{w\bar{v}} |vw|^{2N}\right)$, and because
\begin{equation*}
|e^{w\bar{v}}|^2 e^{-|w|^2} =e^{2\Real{w\bar{v}}-|w|^2}=e^{-|w-v|^2+|v|^2}\leq Ce^{-\frac{1}{16}k^{\frac{1}{2}-2\eps}},
\end{equation*}
we obtain
\begin{equation*}
\left\|\dbar(\chi_k)K_{N,\frac{v}{\sqrt{k}}}^{loc} \right\|_{\mathcal{L}^2(M,L^{\ten k})} \leq Ce^{-\frac{1}{32} k^{\frac{1}{2}-2\eps}}.
\end{equation*}
So by the Horm\"ander's $\mathcal{L}^2$-estimate, the following inequality holds uniformly for $v \in B$,
\begin{equation}\label{gkvdelta}
\|g_{k,v}\|_{\mathcal{L}^2(M,L^{\ten k})}\leq C e^{-\frac{1}{32}  k^{\frac{1}{2}-2\eps}}.
\end{equation}
By the same argument as  in the Lemma \ref{unifupbound} above, for all $u \in B$ we obtain the uniform estimate
\begin{equation*}
\left|g_{k,v}\left(\tfrac{u}{\sqrt{k}} \right) \right|\leq C e^{-\frac{1}{64} k^{\frac{1}{2}-2\eps}}.
\end{equation*}
This concludes the estimate
\begin{equation*}
\left|K_N^{loc}\left(\frac{u}{\sqrt{k}}, \frac{v}{\sqrt{k}} \right)-K \left(\frac{u}{\sqrt{k}}, \frac{v}{\sqrt{k}} \right) \right|\leq C  k^{2n-\frac{N+1}{2}},
\end{equation*}
uniformly for all $u, v \in B$.
\end{proof}
\end{thm}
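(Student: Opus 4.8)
The plan is to deduce the ``local to global'' comparison from the local reproducing property (Proposition \ref{locrepprop}), the uniform bound on the Bergman function (Lemma \ref{unifupbound}), and a single application of H\"ormander's $\dbar$-estimate, exactly as in \cite{berman}. First I would fix $u,v \in B$ and apply the local reproducing property of Proposition \ref{locrepprop} to the holomorphic section $f(w) = \tilde K(w, \tfrac{u}{\sqrt{k}})$, which is legitimate since $\tilde K_{u/\sqrt{k}} \in H^0(B)$. This gives
\begin{equation*}
\tilde K\left(\tfrac{v}{\sqrt{k}}, \tfrac{u}{\sqrt{k}}\right) = \left\langle \chi_k(w)\tilde K\left(w, \tfrac{u}{\sqrt{k}}\right), K_N^{loc}\left(w, \tfrac{v}{\sqrt{k}}\right)\right\rangle_{\mathcal L^2(B, k\vp)} + O\left(k^{n - \frac{N+1}{2}}\right)\|\tilde K_{u/\sqrt{k}}\|_{\mathcal L^2(B,k\vp)}.
\end{equation*}
The norm $\|\tilde K_{u/\sqrt{k}}\|_{\mathcal L^2(B,k\vp)}$ is then controlled by the global $\mathcal L^2$-norm, which by the reproducing property equals $\tilde K(\tfrac{u}{\sqrt k}, \tfrac{u}{\sqrt k}) e^{-k\vp_k(u)} = \mathcal B(\tfrac{u}{\sqrt k}) \le C k^n$ by Lemma \ref{unifupbound}; hence the error term becomes $O(k^{2n - \frac{N+1}{2}})$ after Cauchy--Schwarz (the factor $\sqrt{k^n}$ from the norm combined with $k^{n-\frac{N+1}{2}}$).

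Next I would compare the inner product appearing above with $K_N^{loc}(\tfrac{v}{\sqrt k}, \tfrac{u}{\sqrt k})$ itself. The difference is measured by the section
\begin{equation*}
g_{k,v}\left(\tfrac{w}{\sqrt k}\right) := \chi_k\left(\tfrac{w}{\sqrt k}\right)K_N^{loc}\left(\tfrac{w}{\sqrt k}, \tfrac{v}{\sqrt k}\right) - \mathcal P_{H^0}\left(\chi_k K_{N, v/\sqrt k}^{loc}\right)\left(\tfrac{w}{\sqrt k}\right),
\end{equation*}
which, because of the cutoff $\chi_k$, defines a genuine global section of $L^{\ten k}$ and is precisely the $\mathcal L^2$-minimal solution of $\dbar g_{k,v} = \dbar(\chi_k K_{N,v/\sqrt k}^{loc})$. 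Since $K_N^{loc}$ is holomorphic, $\dbar$ only hits $\chi_k$, so the right-hand side is supported on the annulus where $|w - v| \ge \tfrac14 k^{1/4 - \eps}$ and $|w| \le k^{1/4-\eps}$; there the Gaussian weight gives $|e^{w\bar v}|^2 e^{-|w|^2} = e^{-|w-v|^2 + |v|^2} \le C e^{-\frac{1}{16}k^{1/2 - 2\eps}}$, so $\|\dbar(\chi_k K_{N,v/\sqrt k}^{loc})\|_{\mathcal L^2(M, L^{\ten k})} \le C e^{-\frac{1}{32}k^{1/2 - 2\eps}}$. H\"ormander's estimate (with the positive curvature of $L^{\ten k}$ providing the required lower bound, uniformly in $k$) then yields $\|g_{k,v}\|_{\mathcal L^2(M, L^{\ten k})} \le C e^{-\frac{1}{32}k^{1/2 - 2\eps}}$ uniformly for $v \in B$.

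Finally, running the mean-value argument of Lemma \ref{unifupbound} on the holomorphic section $g_{k,v}$ (after the usual trick of subtracting the linear part of $k\vp$ to make it holomorphic) converts the $\mathcal L^2$ bound into the pointwise bound $|g_{k,v}(\tfrac{u}{\sqrt k})| \le C e^{-\frac{1}{64}k^{1/2-2\eps}}$ for all $u \in B$; combining this with the first two displays and the fact that exponential decay beats any power of $k$ gives the claimed estimate $|\tilde K(\tfrac{u}{\sqrt k}, \tfrac{v}{\sqrt k}) - K_N^{loc}(\tfrac{u}{\sqrt k}, \tfrac{v}{\sqrt k})| \le C k^{2n - \frac{N+1}{2}}$, uniformly in $u,v \in B$. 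The main obstacle is ensuring the H\"ormander estimate and the pointwise-from-$\mathcal L^2$ passage are genuinely uniform in $k$: one must check that the relevant curvature lower bound for $L^{\ten k} \ten K_M^{-1}$ (or whatever twist one uses to apply H\"ormander) scales correctly with $k$ and that the mean-value estimate works on balls of radius $k^{-1/2}$ with constants independent of $k$ and of the base point; both follow from the compactness of $M$ and the positivity of $L$, but this is where the care is needed.
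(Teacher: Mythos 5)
Your proposal is correct and follows essentially the same route as the paper's proof: apply the local reproducing property to $\tilde K_{u/\sqrt{k}}$, control its $\mathcal L^2$-norm via the reproducing property and the uniform Bergman function bound, identify $g_{k,v}$ as the minimal solution of the $\dbar$-equation with exponentially small data supported where $d\chi_k$ lives, and convert the H\"ormander $\mathcal L^2$ bound to a pointwise one by the mean-value argument. The uniformity caveats you raise at the end are exactly the points the paper also relies on (and treats briefly) via compactness of $M$ and positivity of $L$.
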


\section{Proof of Proposition \ref{coeffexist}\label{proofofprop}} As we said before at the heart of the proof we have Lemma \ref{MainLemma}, namely the following identity:
\begin{equation*}
\int_{\bCn}{\bar{v}^{p}v^q e^{u \cdot \bar{v} - |v|^2} dV} =
\begin{cases}
0 & \text{ if } p > q, \\
\frac{q!}{(q-p)!} u^{q-p} & \text{ if } p \leq q.
\end{cases}
\end{equation*}
To prove Proposition \ref{coeffexist}, it is sufficient to consider arbitrary degree $l$ monomials in $u$. By Lemma \ref{MainLemma}, the proof of our proposition is reduced to
\begin{align}\label{eq:reppropmon}
\begin{split}
u^l &= \int_{\mathbb{C}^n}v^le^{u\cdot\bar{v}-|v|^2} \sum_{t=0}^N\sum_{j+m =t}\left(\sum_{p,q}\sum_{r,s}\frac{c_j^{p,q}a_{m}^{r,s}}{\sqrt{k^t}}u^p\bar{v}^qv^r\oo v^s\right)dV\\
&=\sum_{t=0}^N\sum_{m+j =t} \sum_{q+s \leq l+r}\sum_{p} \frac{c^{p,q}_ja^{r,s}_m}{\sqrt{k^t}}u^{p+l+r-q-s}\frac{(l+r)!}{(l+r-q-s)!}.
\end{split}
\end{align}
Note that in the above summation, $l$ is fixed.

We can immediately determine the $c_0$ coefficients, as seen in the following lemma.

\begin{lemma}\label{csub0}
For multiindices $p,q \in \mathbb{Z}_+^{n}$ the following property holds
\begin{equation}\label{eq:c0id}
c_0^{p,q} = \begin{cases}
1 & \mbox{if }p_i=q_i =0 \text{ for all }i ,\\
0 & \mbox{otherwise}.
\end{cases}
\end{equation}

\begin{proof}
The proof proceeds by induction on the length of the multiindex $q$. First, we consider $| q |= 0$. Then taking \eqref{eq:reppropmon} for $l = (0,\ldots, 0)$ and comparing the coefficient of $k$ coefficient yields
\begin{equation*}
1= \sum_{\underset{q+s \leq r}{p}} c^{p,q}_0a^{r,s}_0 u^{p+r-q-s}\frac{r!}{(r-q-s)!}.
\end{equation*}
By the vanishing of $a_0^{r,s}$ from Proposition \ref{remacoeff}, we have
\begin{equation*}
1= \sum_{p} c^{p,0}_0 u^p.
\end{equation*}
Immediately we compare the coefficients of $u$ and conclude result \eqref{eq:c0id} for this case.

Now we assume the induction hypothesis holds for $|q| \leq \lambda-1$ and consider the case $|q|=\lambda$, and take $|l| = \lambda$. Then applying the induction hypothesis to \eqref{eq:reppropmon} and parsing apart the right hand summation yields
\begin{align*}
u^{l}&= \sum_p \sum_{q \leq l} u^p c_0^{p,q} \frac{l!}{(l-q)!} u^{l-q} \\
&= \sum_p \underset{q \leq l}{\sum_{|q| =\lambda}} u^p c_0^{p,q}\frac{l!}{(l-q)!} u^{l-q}  + \sum_p \underset{|q| \leq \lambda-1}{\sum_{q \leq l}} u^p c_0^{p,q} \frac{l!}{(l-q)!} u^{l-q}.
\end{align*}
Note in particular that the requirements in the first right side term that $q \leq l$ and $|q| =\lambda$ immediately imply that $q=l$. Additionally, by the induction hypothesis, the second right side term reduces to simply $u^l$. Subtracting this from both sides yields
\begin{equation*}
0 = \sum_p u^p c_0^{p,l}.
\end{equation*}
The coefficients vanish accordingly and we conclude \eqref{eq:c0id}. The desired result follows.
\end{proof}
\end{lemma}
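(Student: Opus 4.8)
The plan is to read off $c_0$ directly from the reproducing identity \eqref{eq:reppropmon}: specialize to $N=0$ — which by the remark following Proposition \ref{coeffexist} is exactly the reproducing property of the Bargmann-Fock kernel, and is also what one gets by comparing the $k^0$-coefficients in \eqref{eq:reppropmon} for general $N$ — and then induct on the length of the multiindex $q$.

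The first step is to insert the known leading coefficients. By Proposition \ref{remacoeff}, $a_0^{r,s}=0$ unless $r=s=0$, with $a_0^{0,0}=1$ (this is just the facts that $R(z)=O(|z|^4)$ and $\Omega(0)=1$ in B\"ochner coordinates). With $N=0$ the sums over $t,m,j,r,s$ in \eqref{eq:reppropmon} collapse, leaving, for every multiindex $l$,
\[
u^l \;=\; \sum_{q\le l}\ \sum_{p}\ c_0^{p,q}\,\frac{l!}{(l-q)!}\,u^{\,p+l-q},
\]
and from here the whole argument amounts to matching coefficients of monomials in $u$ in this one identity.

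For the base case $|q|=0$ I would take $l=0$ above, where $q\le l$ forces $q=0$, so the identity reads $1=\sum_p c_0^{p,0}u^p$; comparing coefficients gives $c_0^{0,0}=1$ and $c_0^{p,0}=0$ for $p\ne 0$. For the inductive step, assume \eqref{eq:c0id} for all multiindices of length $\le\lambda-1$ and fix $l$ with $|l|=\lambda$; split the $q$-sum into the ranges $|q|=\lambda$ and $|q|\le\lambda-1$. In the first range $q\le l$ together with $|q|=|l|$ forces $q=l$, contributing $l!\sum_p c_0^{p,l}u^p$; in the second range the induction hypothesis annihilates every term except $p=q=0$, which contributes $c_0^{0,0}u^l=u^l$. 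Thus $u^l=l!\sum_p c_0^{p,l}u^p+u^l$, hence $\sum_p c_0^{p,l}u^p=0$, hence $c_0^{p,l}=0$ for every $p$; since $l$ ranges over all multiindices of length $\lambda$ this settles the case $|q|=\lambda$ and closes the induction.

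I do not expect a genuine obstacle: once $a_0^{r,s}$ is pinned down, this is pure multiindex bookkeeping. The one point that must be handled correctly is precisely the combinatorial observation that $q\le l$ and $|q|=|l|$ imply $q=l$ — this is what isolates the single unknown $c_0^{p,l}$ in the inductive step — and, relatedly, setting up the induction so that the auxiliary monomial degree $|l|$ is tied to $|q|$ rather than chosen independently.
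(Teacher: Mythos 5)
Your proposal is correct and follows essentially the same route as the paper: extract the $k^0$-coefficient of \eqref{eq:reppropmon}, use $a_0^{r,s}=\delta_{r,0}\delta_{s,0}$ to collapse the identity to $u^l=\sum_{q\le l}\sum_p c_0^{p,q}\frac{l!}{(l-q)!}u^{p+l-q}$, and induct on $|q|$ with the same splitting of the $q$-sum and the same key observation that $q\le l$ together with $|q|=|l|$ forces $q=l$.
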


\begin{proof}[Proof of Proposition \ref{coeffexist}]
By \eqref{eq:reppropmon}, from comparing the coefficients of $k$, we get
\begin{equation}\label{eq:ulcases}
\sum_{m+j=t}\sum_{s+q\leq l+r}\sum_p c_j^{p,q} a_m ^{r,s}u^{p+r+l-q-s}\frac{(l+r)!}{(l+r-q-s)!}=
\begin{cases}
u^l & t=0,\\
0 & t \geq 1.
\end{cases}
\end{equation}

In order to determine the coefficients $c_j^{p,q}$, we induct on $j$.  The base case $j =0$ is demonstrated by Lemma \ref{csub0}. Now assume the induction hypothesis is satisfied for $j\leq\tau -1$, which implies that the coefficients $c_j^{p,q}$ have been determined for all multiindices $p$ and $q$ and for all such values of $j$.

Take \eqref{eq:ulcases} for $t=\tau$,
\begin{equation*}
\sum_{j=0}^{\tau}\sum_{s+q\leq l+r}\sum_p c_j^{p,q} a_{\tau-j} ^{r,s}u^{p+r+l-q-s}\frac{(l+r)!}{(l+r-q-s)!}=0.
\end{equation*}
By moving all the terms with $j\leq \tau-1$ to the other side, we have
\begin{equation*}
\sum_{s+q\leq l+r}\sum_p c_{\tau}^{p,q} a_{0} ^{r,s}u^{p+r+l-q-s}\frac{(l+r)!}{(l+r-q-s)!}=-\sum_{j=0}^{\tau-1}\sum_{s+q\leq l+r}\sum_p c_j^{p,q} a_{\tau-j} ^{r,s}u^{p+r+l-q-s}\frac{(l+r)!}{(l+r-q-s)!}.
\end{equation*}
Since we know $a_0^{r,s}$ all vanish except that $a_0^{0,0}=1$ (c.f. Lemma \ref{remacoeff}),
\begin{equation}\label{eq:exstcoeff1}
\sum_{q\leq l}\sum_p c_{\tau}^{p,q} u^{p+l-q}\frac{l!}{(l-q)!}=-\sum_{j=0}^{\tau -1}\sum_{s+q\leq l+r}\sum_p c_j^{p,q} a_{\tau-j} ^{r,s}u^{p+r+l-q-s}\frac{(l+r)!}{(l+r-q-s)!}.
\end{equation}
We consider various lengths of $l$ to determine the values of the coefficients. When $|l|=0$, \eqref{eq:exstcoeff1} reduces to
\begin{equation}\label{eq:exstcoeff2}
\sum_p c_{\tau}^{p,0} u^{p}=-\sum_{j=0}^{\tau-1}\sum_{s+q\leq r}\sum_p c_j^{p,q} a_{\tau-j} ^{r,s}u^{p+r-q-s}\frac{r!}{(r-q-s)!}.
\end{equation}
Since, the values of $a_m^{r,s}$ are determined (c.f. Lemma \ref{remacoeff}) and $c_j^{p,q}$ are known due to the inductive hypothesis for $j\leq \tau -1$, we obtain all $c_{\tau}^{p,0}$ by comparing the coefficients of $u$ in \eqref{eq:exstcoeff2}.
We begin a subinduction argument on the values of $|q|$ such that $c_{\tau}^{p,q}$ is known for any $|q| \leq \lambda-1$. The case $\lambda =0$ is determined via our analysis of \eqref{eq:exstcoeff2} discussed above. Consider multiindices $l$ such that $|l| = \lambda$ within the \eqref{eq:exstcoeff3}. As in the Lemma \ref{csub0}, we decompose the left hand summation of \eqref{eq:exstcoeff1} and rearrange the equality to obtain
\begin{equation}\label{eq:exstcoeff3}
\sum_p c_{\tau}^{p,l}u^p = -\sum_{j=0}^{\tau -1}\sum_{s+q\leq l+r}\sum_p c_j^{p,q} a_{\tau-j} ^{r,s}u^{p+r+l-q-s}\frac{(l+r)!}{(l+r-q-s)!} - \sum_p \sum_{\underset{|q| \leq \lambda -1}{q \leq l}}c_{\tau}^{p,q}u^{p+l-q}\frac{l!}{(l-q)!}.
\end{equation}
Due to the induction hypothesis on $\tau$, the first quantity of the right hand side is completely determined. Furthermore, by comparing the coefficients on $u^p$ we can solve for $c^{p,l}_\tau$. This concludes the induction on $|q|$ which implies that $c_{\tau}^{p,q}$ are completely determined, and thus the induction on $\tau$ is also completed. The result follows.
\end{proof}

Next we prove the finiteness of the expansion of the volume form.

\begin{defn}[Weight of coefficient]
Let $A(z) = \sum_{j} \frac{a_j(z)}{\sqrt{k^j}}$ be a series such that each coefficient $a_j(z)$ is a polynomial. Define the \emph{weight} $w(a_j(z))$ of each coefficient as
\begin{equation*}
w(a_j(z)) = \deg(a_j(z)) - j,
\end{equation*}
and the weight of a series $w(A(z)) = \underset{j}{\sup} \ w(a_j(z))$.
\end{defn}

\begin{lemma}[Additivity of weight]
For series $A(z)$ and $B(z)$, we have
\begin{align*}
&w(A(z) + B(z)) = \sup\{w(A(z)),w(B(z))\},\\
&w(A(z)B(z)) = w(A(z)) + w(B(z)).
\end{align*}
\end{lemma}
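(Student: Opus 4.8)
The plan is to verify the two claimed identities directly from the definitions, arguing degree by degree in the formal parameter $\tfrac{1}{\sqrt{k}}$. Write $A(z) = \sum_j \tfrac{a_j(z)}{\sqrt{k^j}}$ and $B(z) = \sum_j \tfrac{b_j(z)}{\sqrt{k^j}}$, so that $w(A) = \sup_j(\deg a_j - j)$ and similarly for $B$. The additive statement is the easy one: the $j$-th coefficient of $A(z) + B(z)$ is $a_j(z) + b_j(z)$, whose degree is at most $\max\{\deg a_j, \deg b_j\}$, hence $\deg(a_j + b_j) - j \le \sup\{w(A), w(B)\}$; taking the supremum over $j$ gives $w(A+B) \le \sup\{w(A), w(B)\}$, and equality holds because leading terms in the highest-weight slot cannot all cancel simultaneously (or, if one only wants the inequality as used later, that already suffices — but I would state it as written). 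So the first line is essentially immediate.

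For the multiplicative statement, I would expand the Cauchy product: the coefficient of $\tfrac{1}{\sqrt{k^t}}$ in $A(z)B(z)$ is $\sum_{i+j = t} a_i(z) b_j(z)$, a polynomial of degree at most $\max_{i+j=t}(\deg a_i + \deg b_j)$. For each such pair, $\deg a_i + \deg b_j - t = (\deg a_i - i) + (\deg b_j - j) \le w(A) + w(B)$, so $w(A(z)B(z)) \le w(A) + w(B)$. For the reverse inequality, pick indices $i_0, j_0$ attaining $w(a_{i_0}) = w(A)$ and $w(b_{j_0}) = w(B)$; then in the slot $t_0 = i_0 + j_0$ the product $a_{i_0}(z) b_{j_0}(z)$ has degree exactly $\deg a_{i_0} + \deg b_{j_0}$, which realizes the weight $w(A) + w(B)$ in that slot. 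The one subtlety is that in the sum $\sum_{i+j = t_0} a_i b_j$ there could in principle be cancellation of the top-degree part; to handle this cleanly I would either assume (as is implicit in the intended application, where these are honest power-series coefficients with nonvanishing leading terms) that the relevant leading coefficients do not cancel, or simply note that the inequality $w(AB) \le w(A) + w(B)$ — the direction actually used in the subsequent finiteness argument for the volume form expansion — is all that is needed, and phrase the lemma accordingly.

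The main (and only) obstacle is this cancellation issue in proving the reverse inequality, i.e. making sure $w(AB)$ is not strictly smaller than $w(A) + w(B)$. I expect this is handled in the paper by the fact that the series under consideration ($e^{-kR(v/\sqrt{k})}$, $\Omega(v/\sqrt{k})$, and the $\sum c_j/\sqrt{k^j}$) have, at each relevant order, leading homogeneous parts that are genuinely nonzero, so no cancellation can occur; alternatively the statement is only invoked in the $\le$ direction. I would therefore present the proof as the short computation above, flagging that the $\le$ bound is the substantive one for the applications that follow.
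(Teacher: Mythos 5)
Your proof follows essentially the same route as the paper's: expand the Cauchy product, compute the weight slot by slot, and sum the degree defects. You are in fact more careful than the paper, whose proof simply asserts $\deg(\sum_{j+k=t} a_j b_k) = \sup_j \deg a_j + \sup_k \deg b_k$ without addressing the possible cancellation of top-degree terms that you correctly flag; your observation that only the inequality $w(AB) \le w(A) + w(B)$ is needed for the subsequent finiteness argument (Lemma on $a_m^{r,s}$) is the right way to close that gap.
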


\begin{proof}
The first equality is clear.  For the second, by direct computation,
\begin{align*}
w(A(z)B(z)) &= w(\left(\sum_j a_j(z)\right)\left(\sum_k b_k(z)\right))\\
&= w(\sum_{t}\sum_{j+k = t}a_j(z)b_k(z)) \\
&= \underset{t}{\sup} \ w(\sum_{j+k=t} a_j(z)b_k(z)) \\
&= \underset{t}{\sup} \ \deg(\sum_{j+k= t}a_j(z)b_k(z)) - t \\
&= \underset{j}{\sup} \ \deg(a_j(z)) - j + \underset{k}{\sup} \ \deg(b_k(z)) - k = w(A(z))+w(B(z)).
\end{align*}
The results follow.
\end{proof}

We now consider the weight of $e^{kR\left(\frac{v}{\sqrt{k}} \right)}$, as a series in powers of $\sqrt{k}$.  Since $R\left(\frac{v}{\sqrt{k}}\right)$ is obtained from Taylor expansion, the degree of the polynomial coefficient matches with the degree of $\sqrt{k}$, so
$w(R(\frac{v}{\sqrt{k}})) = 0$.  Using the additivity of the weight, we have
\begin{equation*}
w(kR(\frac{v}{\sqrt{k}})) = 2.
\end{equation*}
Then we have
\begin{equation*}
e^{kR\left(\frac{v}{\sqrt{k}} \right)} = \sum_{m}\frac{1}{m!}\left(kR\left(\frac{v}{\sqrt{k}} \right)\right)^m.
\end{equation*}
Computing the weight of each term, we obtain
\begin{equation*}
w\left(\left(kR\left(\frac{v}{\sqrt{k}} \right)\right)^m \right) = 2m.
\end{equation*}
Hence we obtain the following lemma.

\begin{lemma} \label{finitecoeffa}
Given $m \in \mathbb{Z}_+$ and multiindices $r,s \in \mathbb{Z}^n_+$ such that $|r+s| > 2m$, then $a^{r,s}_m = 0$.
\end{lemma}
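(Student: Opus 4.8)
The statement to prove is Lemma \ref{finitecoeffa}: if $|r+s| > 2m$ then $a_m^{r,s} = 0$, where the $a_m^{r,s}$ are the coefficients in the expansion \eqref{eq:acoeff} of $e^{-kR(v/\sqrt{k})}\,\Omega(v/\sqrt{k})$ as a formal power series in $1/\sqrt{k}$. The natural approach is to use the weight machinery just developed, so the key point is to phrase the claim as a statement that the series $e^{-kR(v/\sqrt{k})}\,\Omega(v/\sqrt{k})$ has weight $\leq 0$, i.e. $\deg(a_m) - m \leq 0$ is replaced by the sharper bound $\deg(a_m) \leq 2m$. First I would record that $a_m^{r,s}$ is (up to relabeling) the degree-$(r+s)$ part of the $m$-th coefficient, so $|r+s| > 2m$ exactly says that the $m$-th coefficient has a monomial of degree exceeding $2m$; it suffices to show the $m$-th coefficient is a polynomial of degree at most $2m$ in $v, \bar v$.

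Next I would compute the weight of each factor. As already observed in the excerpt, $R(v/\sqrt{k})$ arises from the Taylor expansion of $R$ at the origin: the coefficient of $k^{-j/2}$ is the degree-$j$ homogeneous part of the Taylor series (in fact, since $R = O(|z|^4)$, the lowest order is $j=4$), so $\deg(a_j) = j$ for every coefficient and hence $w(R(v/\sqrt{k})) = 0$. By additivity of weight, $w(kR(v/\sqrt{k})) = 2$, and $w((kR(v/\sqrt{k}))^m/m!) = 2m$, so $w(e^{-kR(v/\sqrt{k})}) = \sup_m 2m$... here one must be slightly careful: the weight is a supremum over $m$ of the weight of the $m$-th term, but each term $(kR)^m/m!$ contributes to the coefficient of $k^{-t/2}$ only for $t \geq 2m$, and the relevant fact is that for fixed $t$, the coefficient of $k^{-t/2}$ in $e^{-kR(v/\sqrt{k})}$ has degree at most... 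I would instead argue directly: in $e^{-kR(v/\sqrt{k})} = \sum_m \frac{1}{m!}(kR(v/\sqrt{k}))^m$, expanding $R(v/\sqrt{k}) = \sum_{j\geq 4} R_j(v,\bar v) k^{-j/2}$ with $R_j$ homogeneous of degree $j$, a typical monomial in the coefficient of $k^{-t/2}$ is a product $R_{j_1}\cdots R_{j_m}$ with $\sum j_i = t + 2m$ (the $k^m$ cancels $m$ factors of $k^{-1}$ worth of the $k^{-j_i/2}$'s... more precisely $k^m \cdot k^{-(j_1+\cdots+j_m)/2} = k^{-t/2}$ forces $j_1 + \cdots + j_m = t+2m$), and the degree of this monomial is $j_1 + \cdots + j_m = t + 2m$. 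Since $j_i \geq 4$ we get $t + 2m \geq 4m$, so $m \leq t/2$, whence $t + 2m \leq t + t = 2t$. Therefore the coefficient of $k^{-t/2}$ in $e^{-kR(v/\sqrt{k})}$ has degree at most $2t$.

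Then I would treat $\Omega(v/\sqrt{k})$: since $\Omega(z) = \det(\partial^2_{z_i \bar z_j}\varphi(z))$ is smooth and $\Omega(0) = 1$ (in Böchner coordinates), its Taylor expansion gives $\Omega(v/\sqrt{k}) = \sum_{l \geq 0} \Omega_l(v,\bar v) k^{-l/2}$ with $\Omega_l$ homogeneous of degree $l$, so $w(\Omega(v/\sqrt{k})) = 0$ and the coefficient of $k^{-l/2}$ has degree $l \leq 2l$. Multiplying the two series, the coefficient of $k^{-m/2}$ in the product is a sum over $t + l = m$ of (coefficient of $k^{-t/2}$ in $e^{-kR}$) times (coefficient of $k^{-l/2}$ in $\Omega$), which has degree at most $2t + l \leq 2t + 2l = 2m$. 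Hence $a_m^{r,s} = 0$ whenever $|r+s| > 2m$. The cleanest packaging is to invoke the additivity-of-weight lemma directly: $w(e^{-kR(v/\sqrt{k})}) = \sup_m w((kR(v/\sqrt{k}))^m/m!) $ is not literally $0$ as a naive weight, so I would actually state and use the refined bound "the coefficient of $k^{-t/2}$ has degree $\leq 2t$" for each factor and conclude for the product, which is exactly what the homogeneity bookkeeping above gives.

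\textbf{Main obstacle.} The only subtlety — and the step I would be most careful about — is the weight of the exponential $e^{-kR(v/\sqrt{k})}$: the weight of $(kR(v/\sqrt{k}))^m$ is $2m$, which grows with $m$, so the series does not have bounded weight, and one cannot simply say $w(e^{-kR}) = 0$. The resolution is precisely the observation that $R$ vanishes to order $4$: this forces $m \leq t/2$ in the coefficient of $k^{-t/2}$, converting the apparently-unbounded weight $2m$ into the bound $2t$ for that coefficient. Once this is pinned down, multiplying by $\Omega$ (which genuinely has weight $0$) and extracting the degree-$(r+s)$ component is routine.
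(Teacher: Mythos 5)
Your proof is correct and follows essentially the same weight/degree bookkeeping as the paper's argument via the additivity-of-weight lemma. You in fact make explicit the step the paper leaves implicit — that $R = O(|z|^4)$ forces $(kR(v/\sqrt{k}))^m$ to contribute only to powers $k^{-t/2}$ with $t \geq 2m$, so the coefficient of $k^{-t/2}$ in $e^{-kR}$ has degree at most $2t$, which combined with the weight-$0$ expansion of $\Omega$ gives the stated bound.
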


Next we will show that the coefficients $c_j^{p,q}$ and $a_m^{r,s}$ satisfy a parity condition.  We begin by establishing the following property of bivariate power series.
\begin{defn}[Parity property of power series]\label{parprop}
We say the coefficients of the bivariate power series
\begin{equation*}
B(x,y) := \sum_{m,p,q}\frac{B^{p,q}_m}{\sqrt{k^m}}x^py^q,
\end{equation*}
has the \textit{parity property} if given $p,q \in \mathbb{Z}_+^n$ with $|p|+|q| \nmodt m$, then $B^{p,q}_m  = 0$.
\end{defn}

By demonstrating that this property is preserved under standard algebraic manipulations, we will conclude the vanishing of particular coefficients $a_j^{p,q}$ and $c_j^{p,q}$ of \eqref{eq:acoeff} and \eqref{ccoeff}, respectively. This is key in demonstrating the finiteness and bounds on the degrees of each $c_j$.

\begin{lemma}\label{parityprop}
If $A$ and $B$ are bivariate power series with the parity property, then so are $A+B$ and $AB$.

\begin{proof}
The additive closure is immediate. For the multiplication, we have
\begin{equation*}
A(x,y)B(x,y) = \sum_{m=0}^{\infty}\frac{1}{\sqrt{k^m}}\sum_{n=0}^{m}\sum_{p,q,r,s}A^{p,q}_nB^{r,s}_{m-n}x^{p+r}\oo y^{q+s}.
\end{equation*}
The term is nonzero when $|p|+|q| \modt n$ and $|r|+|s| \modt m-n$, that is, when
\begin{equation*}
|p|+|r|+|q|+|s| \modt m.
\end{equation*}
The result follows.
\end{proof}
\end{lemma}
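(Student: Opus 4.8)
The plan is to verify the two closure statements directly from Definition \ref{parprop}, treating the additive case as trivial and focusing the argument on the product. For the sum $A+B$, the coefficient of $\tfrac{1}{\sqrt{k^m}} x^p y^q$ is $A^{p,q}_m + B^{p,q}_m$; if $|p|+|q| \nmodt m$ then each summand vanishes by hypothesis, so the sum does too. This requires no computation beyond unwinding the definition.

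For the product, first I would write out the Cauchy-product expansion in the $\sqrt{k}^{-1}$ grading and simultaneously in the monomial grading:
\begin{equation*}
A(x,y)B(x,y) = \sum_{m=0}^{\infty}\frac{1}{\sqrt{k^m}}\sum_{n=0}^{m}\sum_{p,q,r,s}A^{p,q}_n B^{r,s}_{m-n}\, x^{p+r}\, y^{q+s}.
\end{equation*}
Thus the coefficient of $\tfrac{1}{\sqrt{k^m}} x^P y^Q$ is a sum over all $n$ and all splittings $p+r=P$, $q+s=Q$ of the products $A^{p,q}_n B^{r,s}_{m-n}$. The key step is then the observation that each such product is zero unless \emph{both} $|p|+|q|\modt n$ and $|r|+|s|\modt m-n$ hold. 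Adding these two congruences gives $|p|+|q|+|r|+|s| \equiv_2 m$, i.e. $|P|+|Q|\modt m$. Contrapositively, if $|P|+|Q|\nmodt m$, then for every term in the double sum at least one of the two factors vanishes, so the whole coefficient is $0$. This establishes the parity property for $AB$.

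I do not anticipate a genuine obstacle here — the lemma is a formal bookkeeping statement. The only point deserving care is making sure the congruence arithmetic is done with $\mathbb{Z}_+^n$ multiindices rather than scalars: one should note that $|p|+|q|$, $|r|+|s|$, and $|P|+|Q| = |p|+|r|+|q|+|s|$ are all ordinary nonnegative integers (via the $|\cdot|$ norm on multiindices), so that addition of congruences mod $2$ is legitimate. With that remark in place the proof is complete.
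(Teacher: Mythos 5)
Your argument is correct and coincides with the paper's proof: both write out the Cauchy product in the $\sqrt{k}^{-1}$ grading and observe that a nonzero term forces $|p|+|q|\equiv_2 n$ and $|r|+|s|\equiv_2 m-n$, whence $|p|+|r|+|q|+|s|\equiv_2 m$. Your added remark that the contrapositive is what is actually needed, and that the multiindex norms are ordinary integers, is a harmless clarification of the same argument.
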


\begin{lemma} The bivariate expansion of $e^{-kR} \Omega$ 
has the parity property.

\begin{proof}
Consider the expansion
\begin{align*}
e^{-kR\left(\frac{v}{\sqrt{k}} \right)} &= \sum_n \left( (-1)^n(\sqrt{k})^2 \sum\frac{1}{\sqrt{k^m}} R_{m}^{p,q} v^p \oo v^q \right)^n
\\
 &= (-1)^n(\sqrt{k})^{2n} \sum_n \left( \sum \frac{1}{\sqrt{k^m}} R_{m}^{p,q} v^p \oo v^q \right)^n.
\end{align*}
The factors of $k^{-1/2}$ come from evaluating the expansion of the exponential at $(\tfrac{v}{\sqrt{k}})$, the $R^{p,q}_m$ terms have the parity property.  Since the parity property is closed under addition and multiplication by Lemma \ref{parityprop}, and multiplication by $k^{2n}$ preserves the parity property, the entire power series admits the property.  Furthermore since $\Omega(\tfrac{v}{\sqrt{k}})$ also has the parity property, the product also has the parity property. The result follows.
\end{proof}
\end{lemma}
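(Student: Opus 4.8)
The statement to prove is that the bivariate expansion of $e^{-kR}\,\Omega$ has the parity property. The plan is to reduce everything to two ingredients already assembled in the paper: (i) that the parity property is closed under sums and products (Lemma \ref{parityprop}), and (ii) that the two factors $e^{-kR(v/\sqrt k)}$ and $\Omega(v/\sqrt k)$ individually have the parity property once their coefficients are expanded in powers of $1/\sqrt k$. Given (i) and (ii), the product $e^{-kR}\Omega$ inherits the property immediately, so the real work is checking (ii) for each factor.

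First I would treat $R(v/\sqrt k)$. In B\"ochner coordinates $R(z)$ is a convergent power series in $z,\bar z$ starting at order $4$, say $R(z)=\sum_{p,q} R^{p,q} z^p\bar z^q$ with $R^{p,q}=0$ unless $|p|+|q|\geq 4$. Substituting $z=v/\sqrt k$ gives $R(v/\sqrt k)=\sum_{m}\frac{1}{\sqrt{k^m}}\bigl(\sum_{|p|+|q|=m}R^{p,q}v^p\bar v^q\bigr)$, so the coefficient of $k^{-m/2}$ is a homogeneous polynomial of degree exactly $m$ in $(v,\bar v)$ — in particular $|p|+|q|=m$, which is the strongest possible form of the parity property. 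Multiplying by $k$ shifts the power of $\sqrt k$ down by $2$, which preserves the congruence $|p|+|q|\equiv m \bmod 2$; hence $kR(v/\sqrt k)$ has the parity property. Then $e^{-kR(v/\sqrt k)}=\sum_{n\geq 0}\frac{(-1)^n}{n!}\bigl(kR(v/\sqrt k)\bigr)^n$ is a sum of powers of a power series with the parity property, so by Lemma \ref{parityprop} (used repeatedly, once for each power and once for the outer sum) $e^{-kR(v/\sqrt k)}$ has the parity property. One should remark that the sum over $n$ is infinite but this causes no trouble: for each fixed power $k^{-m/2}$ only finitely many $n$ contribute (since $(kR(v/\sqrt k))^n$ starts at order $k^{-n}$ in $\sqrt k$ after accounting for the degree-$\geq 4$ vanishing), so the parity property is checked term by term in $m$.

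Second I would treat $\Omega(v/\sqrt k)=\det\bigl(\partial^2\varphi/\partial z_i\partial\bar z_j\bigr)(v/\sqrt k)$. Since $\varphi(z)=|z|^2+R(z)$ with $R(z)=O(|z|^4)$, the Hessian $\bigl(\partial^2\varphi/\partial z_i\partial\bar z_j\bigr)(z) = I + \bigl(\partial^2 R/\partial z_i\partial\bar z_j\bigr)(z)$, and each entry is a power series in $(z,\bar z)$; taking the determinant keeps it a power series in $(z,\bar z)$. By the same homogeneity bookkeeping as above, evaluating at $z=v/\sqrt k$ turns a monomial $z^p\bar z^q$ of total degree $d=|p|+|q|$ into $k^{-d/2}v^p\bar v^q$, so again the coefficient of $k^{-m/2}$ in $\Omega(v/\sqrt k)$ is a polynomial all of whose monomials $v^p\bar v^q$ satisfy $|p|+|q|=m$, which in particular gives $|p|+|q|\equiv m\bmod 2$. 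Thus $\Omega(v/\sqrt k)$ has the parity property. Combining with the previous paragraph and invoking Lemma \ref{parityprop} one last time for the product finishes the proof.

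The main obstacle — and it is more a matter of care than of difficulty — is the bookkeeping that a function holomorphically obtained from $\varphi$ by Taylor expansion, when evaluated at $v/\sqrt k$, produces coefficients whose $(v,\bar v)$-degree matches the power of $1/\sqrt k$. This is the single mechanism ("degree $=$ power of $\sqrt k$") that forces the parity congruence for both factors; once it is stated cleanly for $R$ and for the entries of the complex Hessian, closure under the determinant (a polynomial in the entries) and under the exponential series (a sum of powers) both follow from Lemma \ref{parityprop}. I would be careful to phrase the degree statement as an equality (not just a congruence) at the level of the elementary building blocks $R$ and $\Omega$, since that is what is actually true and is cleanest to verify; the weaker parity congruence is all that is needed downstream and is what survives after multiplying by $k$.
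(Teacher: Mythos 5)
Your proposal is correct and follows essentially the same route as the paper: the key mechanism in both is that Taylor expansion evaluated at $v/\sqrt{k}$ makes the $(v,\bar v)$-degree of each coefficient equal the power of $k^{-1/2}$, that multiplication by $k$ shifts that power by $2$ and so preserves parity, and that closure under sums and products (Lemma \ref{parityprop}) then handles the exponential series, the determinant, and the final product. Your write-up is in fact more careful than the paper's, which asserts the parity of $\Omega(v/\sqrt{k})$ without justification and does not address the finiteness of the contributions to each fixed power of $k^{-1/2}$; both points you handle correctly.
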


\begin{lemma}\label{csub0is0} For all $m \in \mathbb{N}$, given $p,q \in \mathbb{Z}_+^n$ such that $|p|+|q| \nmodt m$, we have $c_m^{p,q}= 0$.

\begin{proof}
The proof proceeds by induction on $m$ satisfying the Lemma statement. First, the case when $m=0$ is an immediate consequence of \eqref{eq:c0id}. Next assume that the parity property holds for $m \leq t-1$ and we do a sub induction on $|q|$. From \eqref{eq:exstcoeff2} we obtain
\begin{equation*}
- \sum_{\alpha} c_{t}^{\ga, 0} u^{\ga} = \sum_{j=0}^{t-1}\sum_p \sum_{r \geq q + s}{c_j^{p,q} a_{t-j}^{r,s} u^{p+r-q-s} \frac{r!}{(r-q-s)!}} .
\end{equation*}
On the right hand side, the coefficients are nonzero only when $|p|+|q| \modt j$ and $|r| + |s| \modt t - j$. Combining these two equalities yields
\begin{equation*}
|p|+|q| +|r| +|s| \modt t,
\end{equation*}
which implies that $c_{t}^{\alpha, 0}= 0$ for $\alpha \nmodt t$.

Now we assume the induction hypothesis holds for $|q| \leq \lambda-1$. For $|q| = \lambda$, we have from \eqref{eq:exstcoeff3},
\begin{equation*}
\sum_p c_{t}^{p,l}u^p = -\sum_{j=0}^{t -1}\sum_{s+q\leq l+r}\sum_p c_j^{p,q} a_{t-j} ^{r,s}u^{p+r+l-q-s}\frac{(l+r)!}{(l+r-q-s)!} - \sum_p \sum_{\underset{|q| \leq \lambda -1}{q \leq l}}c_{t}^{p,q}u^{p+l-q}\frac{l!}{(l-q)!}.
\end{equation*}
On the right hand side, in the first summation, when $|p| + |q| +|r| + |s| \not\modt t$, the terms are zero by the induction on $j$, hence the exponent must be
\begin{equation*}
|p|+|r|+|q|+|s| + |l| \modt t +|l|.
\end{equation*}
In the second summation, when $|p| + |q|  \not\modt t$, the terms are zero by the induction on $|q|$, hence the exponent must be 
\begin{equation*}
|p|+|q|+ |l| \modt t +|l|.
\end{equation*}
Then comparing the exponent on both sides, we get on the left hand side,
\begin{equation*}
|p|\modt t+|l|.
\end{equation*}
Hence
\begin{equation*}
|p|+|l|\modt t.
\end{equation*}
The subinduction on $|q|$ has been proven, therefore we may determine all $c_t^{p,q}$ for the given $t$. Consequently the induction on $t$ is complete, and the result follows.
\end{proof}
\end{lemma}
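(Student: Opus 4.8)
The plan is to establish the parity property for the $c_m^{p,q}$ by the same double induction already used to construct the coefficients: an outer induction on $m$, and, for each fixed $m$, an inner induction on $|q|$, following verbatim the scaffolding of Lemma \ref{csub0} and the proof of Proposition \ref{coeffexist}. The two inputs driving the argument are the recursion relations \eqref{eq:exstcoeff2} and \eqref{eq:exstcoeff3}, which express $c_\tau^{p,q}$ in terms of data of strictly lower order, together with two parity facts already in hand: first, $a_m^{r,s}=0$ whenever $|r|+|s|\not\equiv m\pmod 2$, since the bivariate expansion of $e^{-kR}\Omega$ has the parity property; and second, via the outer induction hypothesis, $c_j^{p,q}=0$ whenever $|p|+|q|\not\equiv j\pmod 2$ for every $j\le m-1$. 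The one elementary observation that makes all the exponent bookkeeping go through is that $-x\equiv x\pmod 2$, so the signs appearing in exponents such as $p+r+l-q-s$ are immaterial modulo $2$.

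For the base case $m=0$, Lemma \ref{csub0} gives $c_0^{p,q}=1$ if $p=q=0$ and $c_0^{p,q}=0$ otherwise; since $p=q=0$ forces $|p|+|q|=0\equiv 0\pmod 2$, the parity property holds vacuously. For the outer inductive step I would fix $\tau\ge 1$, assume the parity property for all $c_j$ with $j\le\tau-1$, and run the inner induction on $|q|$. The inner base case $|q|=0$ uses \eqref{eq:exstcoeff2}: a summand on the right-hand side survives only when $|p|+|q|\equiv j$ and $|r|+|s|\equiv\tau-j\pmod 2$, hence only when $|p|+|q|+|r|+|s|\equiv\tau\pmod 2$; the attached power of $u$ is $u^{p+r-q-s}$, whose total degree is therefore $\equiv\tau\pmod 2$. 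Matching powers of $u$ against the left-hand side $\sum_p c_\tau^{p,0}u^p$ forces $c_\tau^{p,0}=0$ unless $|p|\equiv\tau\pmod 2$, which is exactly the claim for $q=0$.

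For the inner inductive step, assume the parity property for $c_\tau^{p,q}$ with $|q|\le\lambda-1$ and take $|l|=|q|=\lambda$ in \eqref{eq:exstcoeff3}. In the first right-hand sum, by the outer hypothesis on $j$ and the parity of $a$, nonzero terms satisfy $|p|+|q|+|r|+|s|\equiv\tau\pmod 2$, so the exponent $p+r+l-q-s$ has total degree $\equiv\tau+|l|\pmod 2$; in the second right-hand sum, nonzero terms satisfy $|p|+|q|\equiv\tau\pmod 2$ by the inner hypothesis, so the exponent $p+l-q$ also has total degree $\equiv\tau+|l|\pmod 2$. Hence every monomial on the right carries $u$-degree $\equiv\tau+|l|\pmod 2$, and comparing with $\sum_p c_\tau^{p,l}u^p$ on the left forces $|p|\equiv\tau+|l|\pmod 2$ whenever $c_\tau^{p,l}\ne 0$, i.e. $|p|+|l|\equiv\tau\pmod 2$. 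This closes the inner induction, so all $c_\tau^{p,q}$ are pinned down with the correct parity, and the outer induction closes as well.

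I expect no conceptual obstacle here; the work is purely organizational. The one place to be careful is keeping the two parity hypotheses straight — one supplied by the outer induction on the order $j<\tau$, the other the fixed statement for the $a_m^{r,s}$ — and making sure that the summation constraints in \eqref{eq:exstcoeff1}--\eqref{eq:exstcoeff3} (the conditions $q\le l$, $s+q\le l+r$, and so on) do not introduce monomials of the wrong parity; since every individual admissible term already has the right $u$-degree parity, their sum cannot, so the coefficient comparison is legitimate.
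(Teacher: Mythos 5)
Your proposal is correct and follows essentially the same argument as the paper: an outer induction on the order $m$ with base case from Lemma \ref{csub0}, an inner induction on $|q|$ driven by \eqref{eq:exstcoeff2} and \eqref{eq:exstcoeff3}, and the parity of the $a_m^{r,s}$ combined with mod-2 bookkeeping of the $u$-exponents to pin down the parity of $c_\tau^{p,l}$. Nothing further is needed.
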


We next establish two combinatorial identities in preparation for the proof of Theorem \ref{pq2mcsubmpq0}.

\begin{lemma}\label{bimonid1s}
Given $l \in \mathbb{Z}_+$, for all $s \in [1,l] \cap \mathbb{Z}_+$ the following equality holds
\begin{equation}\label{eq:bimonid1s}
\sum_{w=0}^s(-1)^s \binom{l}{w} \binom{l-w}{l-s}= 0.
\end{equation}

\begin{proof}
We assign the terms of \eqref{eq:bimonid1s} as polynomial coefficients and rearrange terms appropriately,
\begin{align*}
\sum_{w=0}^{l} \sum_{s=0}^w(-1)^s \binom{l}{w} \binom{l-w}{l-s} x^{l-s} &= \sum_{w=0}^{l} \left( \sum_{s=w}^{l} \binom{l-w}{l-s} x^{l-s}\right) \binom{l}{w} (-1)^s \\
&= \sum_{w=0}^l \left( (x+1)^{l-w} \right) \binom{l}{w} (-1)^w \\
 &= \sum_{w=0}^l{\left((x+1)^{l-w} \binom{l}{w}(-1)^w \right)} \\
 & = x^l.
 \end{align*}
 The result follows.
\end{proof}
\end{lemma}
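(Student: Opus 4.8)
The plan is to prove the identity $\sum_{w=0}^s(-1)^s \binom{l}{w}\binom{l-w}{l-s}=0$ by the generating-function bookkeeping trick already set up in the statement: treat the terms as coefficients of $x^{l-s}$ in a single polynomial and show that polynomial equals $x^l$, so that the coefficient of $x^{l-s}$ vanishes for every $s\in[1,l]$. Concretely, I would form the double sum $\sum_{w=0}^{l}\sum_{s=0}^{w}(-1)^s\binom{l}{w}\binom{l-w}{l-s}x^{l-s}$ — one must be slightly careful here that the natural range of $s$ in the inner sum is $w\le s\le l$ (since $\binom{l-w}{l-s}=0$ unless $l-s\le l-w$, i.e. $s\ge w$), which is how the second line of the displayed computation rewrites things. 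For fixed $w$, summing $\binom{l-w}{l-s}x^{l-s}$ over $s$ from $w$ to $l$ is, after reindexing $j=l-s$ (so $j$ runs $0$ to $l-w$), exactly $\sum_{j=0}^{l-w}\binom{l-w}{j}x^{j}=(x+1)^{l-w}$ by the binomial theorem.

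Having collapsed the inner sum, the outer sum becomes $\sum_{w=0}^{l}(x+1)^{l-w}\binom{l}{w}(-1)^w$. This is again the binomial theorem, now applied to the pair $(x+1)$ and $(-1)$: it equals $\bigl((x+1)+(-1)\bigr)^{l}=x^{l}$. Reading off the coefficient of $x^{l-s}$ on both sides: the left side gives $\sum_{w=0}^{s}(-1)^s\binom{l}{w}\binom{l-w}{l-s}$ (the sign $(-1)^s$ coming from the match $(-1)^w$ and the constraint that contributing $w$'s satisfy $w\le s$, so after accounting for the reindexing the exponent carried is $s$), while the right side $x^l$ has coefficient $0$ in degree $l-s$ whenever $l-s<l$, i.e. $s\ge 1$. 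This yields \eqref{eq:bimonid1s} for all $s\in[1,l]\cap\mathbb{Z}_+$, which is exactly the claim.

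There is essentially no deep obstacle; the only subtlety — and the place I would be most careful writing it up — is matching the ranges and signs so that the coefficient extraction genuinely produces $\sum_{w=0}^s(-1)^s\binom{l}{w}\binom{l-w}{l-s}$ rather than a sum with a stray $(-1)^w$ or a mismatched summation range. In particular one should note that in $\sum_{w=0}^{l}(-1)^s\binom{l}{w}\binom{l-w}{l-s}x^{l-s}$ the term is automatically zero unless $w\le s$, so restricting the $w$-sum to $[0,s]$ costs nothing and the written form of the lemma is legitimate; and one should check that on the diagonal $s=l$ the identity is the expected $\sum_{w=0}^{l}(-1)^l\binom{l}{w}\binom{l-w}{0}=(-1)^l\,2^l$, which is the coefficient of $x^0$ in $x^l$ only when $l>0$ — consistent with $s\le l$ and $s\ge 1$ forcing $l\ge 1$. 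An alternative, fully combinatorial proof would rewrite $\binom{l}{w}\binom{l-w}{l-s}=\binom{l}{s}\binom{s}{w}$ (choose the $s$-subset first, then the $w$-subset inside it) and then $\sum_{w=0}^{s}(-1)^s\binom{l}{s}\binom{s}{w}=(-1)^s\binom{l}{s}\sum_{w=0}^{s}\binom{s}{w}=(-1)^s\binom{l}{s}2^s$ — but that is nonzero, so the sign in the lemma as stated must in fact be $(-1)^w$ inside the sum, not $(-1)^s$ outside; I would flag this and follow the generating-function derivation exactly as the authors have written it, since that computation is internally consistent and is what the later proof of Theorem \ref{pq2mcsubmpq0} will invoke.
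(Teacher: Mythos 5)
Your proposal is correct and follows essentially the same generating-function argument as the paper: collapse the inner sum via the binomial theorem to get $\sum_{w=0}^{l}(-1)^w\binom{l}{w}(x+1)^{l-w}=x^l$ and read off the coefficient of $x^{l-s}$. You are also right that the sign in the displayed identity is a typo for $(-1)^w$ (as confirmed both by your computation $\binom{l}{w}\binom{l-w}{l-s}=\binom{l}{s}\binom{s}{w}$, which gives $(-1)^s\binom{l}{s}2^s\neq 0$ with the sign as printed, and by the form $(-1)^{|w|}$ actually used in Corollary \ref{binomid1}); the paper's own proof silently makes the same correction in its second line.
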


\begin{cor}\label{binomid1} Given a multiindex $l \in \mathbb{Z}_+^n$, for all nonzero multiindices $s \leq l$ the following equality holds
\begin{equation}\label{eq:binomid1}
\sum_{w \leq s} (-1)^{|w|} \binom{l}{w} \binom{l-w}{l-s} = 0.
\end{equation}

\begin{proof} Taking the left hand side of \eqref{binomid1} and decomposing it as a product of binomial coefficients we obtain
\begin{align*}
\sum_{w \leq s} (-1)^{|w|} \binom{l}{w} \binom{l-w}{l-s}  & =\prod_{i=1}^{n} \left(\sum_{w_i \leq s_i} (-1)^{w_i} \binom{l_i}{w_i} \binom{l_i-w_i}{l_i-s_i} \right).
\end{align*}
We observe that since $s$ is nonzero then there is at least one $i \in [1,n] \in \mathbb{Z}_+$ such that $s_i$ is nonzero. Applying Lemma \ref{bimonid1s} to this index within the product yields the desired result.
\end{proof}
\end{cor}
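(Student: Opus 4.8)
The plan is to reduce the multiindex identity to the scalar identity of Lemma \ref{bimonid1s} by exploiting the fact that every ingredient of the sum — the multiindex binomial coefficients, the sign, and the summation range — factors over the coordinate index $i$. Concretely, by the definition of the multiindex binomial coefficient we have $\binom{l}{w}=\prod_{i=1}^n\binom{l_i}{w_i}$ and $\binom{l-w}{l-s}=\prod_{i=1}^n\binom{l_i-w_i}{l_i-s_i}$, while $(-1)^{|w|}=\prod_{i=1}^n(-1)^{w_i}$, and the multiindex inequality $w\le s$ is by definition the conjunction of the $n$ independent scalar conditions $0\le w_i\le s_i$. Collecting these, the first step would be to rewrite the sum over $w\le s$ as an iterated (hence factored) sum, yielding
\begin{equation*}
\sum_{w\le s}(-1)^{|w|}\binom{l}{w}\binom{l-w}{l-s}=\prod_{i=1}^n\left(\sum_{w_i=0}^{s_i}(-1)^{w_i}\binom{l_i}{w_i}\binom{l_i-w_i}{l_i-s_i}\right).
\end{equation*}

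The second step uses the hypothesis that $s$ is a \emph{nonzero} multiindex with $s\le l$: there is at least one index $i_0\in\{1,\dots,n\}$ with $s_{i_0}\ge 1$, and for that index we automatically have $1\le s_{i_0}\le l_{i_0}$, so the scalars $l_{i_0},s_{i_0}$ satisfy the hypotheses of Lemma \ref{bimonid1s}. Applying that lemma to the $i_0$-th factor shows that this factor is zero, and hence the entire product on the right-hand side vanishes, which is exactly the asserted identity.

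I do not expect any genuine obstacle at this level: the only point needing a little care is the bookkeeping in the first step, namely checking that the three product decompositions are compatible and that rewriting $\sum_{w\le s}$ as a product of one-variable sums loses nothing. The substantive content of the statement lives entirely in the scalar Lemma \ref{bimonid1s}, whose proof in turn amounts to recognizing the inner sum as a coefficient extraction from $(x+1)^{l-w}$ and invoking the binomial theorem; since that lemma may be taken as given here, the corollary follows immediately.
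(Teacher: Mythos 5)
Your proposal is correct and follows exactly the paper's argument: factor the multiindex sum into a product of one-variable sums, note that $s\neq 0$ forces some $s_{i_0}\geq 1$ (with $s_{i_0}\leq l_{i_0}$), and kill that factor by Lemma \ref{bimonid1s}. No further comment is needed.
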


\begin{lemma}\label{binomid2s}
For all $\eta \in [1,l] \cap \mathbb{Z}_+$, and $r \in [0,\eta-1] \cap \mathbb{Z}_+$,
\begin{equation}\label{eq:binomid2s}
\sum_{w=0}^{\eta}(-1)^w \binom{l}{w} \binom{r+l-w}{\eta-w} = 0.
\end{equation}

\begin{proof} To verify the lemma we apply the following combinatorial identity. For $0 \leq w \leq 2$, we have
\begin{equation*}
\binom{r+l-w}{\eta - w} =\left( \binom{l-w}{\eta - w} \binom{r}{0} + \binom{l-w}{\eta - w-1} \binom{r}{1} + \cdots + \binom{\eta + 1 - w}{0} \binom{r}{\eta - w} \right).
\end{equation*}
We expand left hand side of \eqref{eq:binomid2s} by the above identity and obtain
\begin{align*}
\sum_{w=0}^{\eta}(-1)^w \binom{l}{w} \sum_{v = 0}^{\eta - w} \binom{l-w}{\eta - w - v} \binom{r}{v} &= \sum_{v = 0}^{\eta} \left( \sum_{w=0}^{\eta - v}(-1)^w \binom{l}{w} \binom{l-w}{\eta - w - v} \right)_L \binom{r}{v}.
\end{align*}
It suffices to prove that the labeled quantity $L = 0$, that is
\begin{equation*}
\sum_{w=0}^{\eta - v}(-1)^w \binom{l}{w} \binom{l - w}{\eta - j - v} = 0.
\end{equation*}
This is equivalent to demonstrating that for any $\eta \in \mathbb{Z}_+$ such that $1 \leq \eta \leq l +1$,
\begin{equation}\label{eq:binomid2s}
r_{\eta} = \sum_{w=0}^{\eta} (-1)^w \binom{l}{w} \binom{l -w}{\eta- w} = 0.
\end{equation}
We again embed \eqref{eq:binomid2s} as coefficients of a polynomial in $x$ and with careful manipulation obtain,
\begin{align*}
\sum_{\eta =0}^{l}{r_{\eta}x^{\eta}}&=\sum_{\eta = 0}^{l} \sum_{w=0}^{\eta} (-1)^w \binom{l}{w} \binom{l-w}{\eta -w} x^{\eta}\\
&= \sum_{w=0}^{l}\left( \sum_{\eta =w}^{l} \binom{l-w}{\eta - w} x^{\eta -w} \right) \binom{l}{w}(-x)^w\\
&= \sum_{w=0}^{l}(1+x)^{l - w} \binom{l}{w} (-x)^w\\
&= 1.
\end{align*}
The result follows.
\end{proof}
\end{lemma}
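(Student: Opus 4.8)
The plan is to prove Lemma~\ref{binomid2s} directly by generating functions, interpreting each binomial coefficient as a Taylor coefficient and then collapsing the resulting sum with the binomial theorem, rather than first passing through Vandermonde's identity. The starting observation is that, for $0\le w\le \eta$, one has $\binom{r+l-w}{\eta-w}=[x^{\eta-w}](1+x)^{r+l-w}$ (the coefficient of $x^{\eta-w}$), so that $(-1)^w\binom{l}{w}\binom{r+l-w}{\eta-w}$ is the coefficient of $x^{\eta}$ in $(-x)^w\binom{l}{w}(1+x)^{r+l-w}$. Since $\binom{l}{w}=0$ for $w>l$, and since $(-x)^w(1+x)^{r+l-w}$ contains no monomial of degree $\le\eta$ once $w>\eta$, the summation index $w$ may be allowed to run over all of $[0,l]$ without changing the value of the sum.

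Carrying this out, I would write (with $[x^{\eta}]$ denoting extraction of the coefficient of $x^{\eta}$)
\[
\sum_{w=0}^{\eta}(-1)^w\binom{l}{w}\binom{r+l-w}{\eta-w}
=[x^{\eta}]\sum_{w=0}^{l}\binom{l}{w}(-x)^w(1+x)^{r+l-w}
=[x^{\eta}]\,(1+x)^{r}\sum_{w=0}^{l}\binom{l}{w}(-x)^w(1+x)^{l-w},
\]
where in the last equality I used $r+l-w=r+(l-w)$ together with $r\ge 0$ and $l-w\ge 0$ to pull the common factor $(1+x)^{r}$ out of every term. By the binomial theorem the remaining sum equals $\bigl((1+x)+(-x)\bigr)^{l}=1$, so the whole expression is $[x^{\eta}](1+x)^{r}=\binom{r}{\eta}$. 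Since $r\le\eta-1<\eta$, this coefficient vanishes, which is exactly \eqref{eq:binomid2s}.

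The argument is essentially obstruction-free; the only place asking for care is the bookkeeping in the first display---namely that enlarging the range of $w$ from $[0,\eta]$ to $[0,l]$ adds only terms that contribute $0$ to $[x^{\eta}]$ (because $\eta-w<0$ forces $\binom{r+l-w}{\eta-w}=0$), and that every series appearing is a genuine polynomial so there is no convergence subtlety. If one prefers to stay inside the combinatorial framework already developed in the paper, the same endgame still applies after expanding $\binom{r+l-w}{\eta-w}=\sum_{v}\binom{r}{v}\binom{l-w}{\eta-w-v}$ by Vandermonde and interchanging the two summations: this reduces the claim to $\sum_{w}(-1)^w\binom{l}{w}\binom{l-w}{\eta-w}=0$ for every $\eta\ge 1$, which is the $r=0$ instance of the identity above and which one can also read off from $\sum_{\eta\ge 0}\bigl(\sum_{w}(-1)^w\binom{l}{w}\binom{l-w}{\eta-w}\bigr)x^{\eta}=\sum_{w}\binom{l}{w}(-x)^w(1+x)^{l-w}=1$.
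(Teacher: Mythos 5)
Your proof is correct. Each step checks out: for $w\le\eta\le l$ the identification $\binom{r+l-w}{\eta-w}=[x^{\eta-w}](1+x)^{r+l-w}$ is valid, the extension of the summation range to $[0,l]$ only adds terms whose lowest-degree monomial $(-x)^w$ already exceeds degree $\eta$, the factorization $(1+x)^{r+l-w}=(1+x)^r(1+x)^{l-w}$ is legitimate since $l-w\ge 0$, and the binomial collapse $\sum_{w=0}^{l}\binom{l}{w}(-x)^w(1+x)^{l-w}=1$ reduces everything to $[x^\eta](1+x)^r=\binom{r}{\eta}=0$. The underlying engine is the same as the paper's (the generating-function identity $\sum_w\binom{l}{w}(-x)^w(1+x)^{l-w}=1$), but your organization is genuinely different and arguably cleaner: the paper first expands $\binom{r+l-w}{\eta-w}$ by Vandermonde, interchanges sums, and reduces to the $r=0$ case, whereas you absorb $r$ into a factor $(1+x)^r$ and evaluate the sum in one pass. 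Your route has the advantage of making the hypothesis $r\le\eta-1$ visibly do its work at the very end (the answer is exactly $\binom{r}{\eta}$); by contrast, the paper's claim that the bracketed quantity $L$ vanishes for every $v$ is false at $v=\eta$ (there $L=1$), and that boundary term is only killed because it is multiplied by $\binom{r}{\eta}=0$ — a point the paper leaves implicit. Your closing remark correctly identifies that the Vandermonde route reduces to the $r=0$ instance, so the two arguments are reconciled.
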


\begin{cor}\label{binomid2}
For all multiindices $\eta ,r \in \mathbb{Z}_+^n$ with $r < \eta$,
\begin{equation}\label{eq:binomid2}
{\sum_{w \leq \eta}  (-1)^{|w|+1} \binom{l}{w}  \binom{r+l-w}{\eta - w}} = 0.
\end{equation}

\begin{proof} Taking the left hand side of \eqref{binomid2} and decomposing it as a product of binomial coefficients we obtain
\begin{align*}
{\sum_{w \leq \eta}  (-1)^{|w|+1} \binom{l}{w}  \binom{r+l-w}{\eta - w}}  & =- \prod_{i=1}^{n} \left({\sum_{w_i \leq \eta_i}  (-1)^{w_i} \binom{l_i}{w_i}  \binom{r_i+l_i-w_i}{\eta_i - w_i}} \right).
\end{align*}
We observe that since $s$ is nonzero then there is at least one $i \in [1,n] \in \mathbb{Z}_+$ such that $r_i<\eta_i$. Applying Lemma \ref{binomid2s} to this index within the product yields the desired result.
\end{proof}
\end{cor}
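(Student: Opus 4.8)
The plan is to separate the $n$ variables and reduce to the one-dimensional identity of Lemma \ref{binomid2s}, exactly as Corollary \ref{binomid1} was obtained from Lemma \ref{bimonid1s}. The multiindex binomial coefficients factor as $\binom{l}{w}=\prod_{i=1}^n\binom{l_i}{w_i}$ and $\binom{r+l-w}{\eta-w}=\prod_{i=1}^n\binom{r_i+l_i-w_i}{\eta_i-w_i}$, the sign splits as $(-1)^{|w|}=\prod_{i=1}^n(-1)^{w_i}$, and the index constraint $w\le\eta$ decouples into $w_i\le\eta_i$ for each $i$ separately. Hence the left side of \eqref{eq:binomid2} is, up to the overall minus sign, a product of $n$ independent one-variable sums:
\begin{equation*}
\sum_{w\le\eta}(-1)^{|w|+1}\binom{l}{w}\binom{r+l-w}{\eta-w}
=-\prod_{i=1}^n\left(\sum_{w_i=0}^{\eta_i}(-1)^{w_i}\binom{l_i}{w_i}\binom{r_i+l_i-w_i}{\eta_i-w_i}\right).
\end{equation*}

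Next I would locate the coordinate at which the hypothesis takes effect. By the multiindex inequality convention, $r<\eta$ means $r_i\le\eta_i$ for all $i$ together with $r\ne\eta$, so there is an index $i_0$ with $r_{i_0}<\eta_{i_0}$; in particular $\eta_{i_0}\ge1$ and $0\le r_{i_0}\le\eta_{i_0}-1$, which are precisely the hypotheses of Lemma \ref{binomid2s}. Applying that lemma to the $i_0$-th factor shows it equals $0$, so the entire product — and thus the original sum — vanishes.

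I do not expect any genuine obstacle: once the product decomposition is noticed the rest is bookkeeping. The one point that warrants a remark is that Lemma \ref{binomid2s} is stated with the extra condition $\eta_{i_0}\le l_{i_0}$, which $r<\eta$ does not guarantee. This is harmless, since the one-variable sum is actually equal to $\binom{r}{\eta}$ for all nonnegative $l,r,\eta$: multiplying by $x^\eta$ and summing over $\eta$ collapses it to $(1+x)^{r}\bigl((1+x)-x\bigr)^{l}=(1+x)^{r}$, whose coefficient of $x^\eta$ is $\binom{r}{\eta}$. Thus the left side of \eqref{eq:binomid2} is in fact $-\binom{r}{\eta}$ as a multiindex binomial, which vanishes exactly because $\binom{r_{i_0}}{\eta_{i_0}}=0$. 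Either of these two routes closes the argument.
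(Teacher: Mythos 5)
Your proof is correct and follows the same route as the paper: factor the multiindex sum into a product of $n$ one-variable sums and apply Lemma \ref{binomid2s} at a coordinate $i_0$ with $r_{i_0}<\eta_{i_0}$ (the paper's reference to ``$s$ nonzero'' is a typo for this same condition). Your closing remark is a genuine improvement on the argument as written: Lemma \ref{binomid2s} is stated only for $\eta\le l$, which the hypothesis $r<\eta$ of the corollary does not supply, and your generating-function identity $\sum_{w=0}^{\eta}(-1)^w\binom{l}{w}\binom{r+l-w}{\eta-w}=\binom{r}{\eta}$ removes that restriction and establishes the corollary in the generality in which it is stated.
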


\begin{thm}\label{pq2mcsubmpq0} Given $m \in \mathbb{Z}_+$ and multiindices $p,q \in \mathbb{Z}_+^n$ such that $|p+q| > 2m$, then $c_m^{p,q} = 0$.

\begin{proof} Recall the identity  \eqref{eq:exstcoeff1} established in Lemma \ref{coeffexist}, we have
\begin{equation}\label{eq:rightnleft}
\sum_{q\leq l}\sum_p c_{\tau}^{p,q} u^{p+l-q}\frac{l!}{(l-q)!}=-\sum_{j=0}^{\tau -1}\sum_{s+q\leq l+r}\sum_p c_j^{p,q} a_{\tau-j} ^{r,s}u^{p+r+l-q-s}\frac{(l+r)!}{(l+r-q-s)!}.
\end{equation}
With the theorem statement as the induction hypothesis, for each fixed $\tau \in \mathbb{N}$, we induct on appropriate values of $m$. For $m=0$ this is true by \eqref{eq:c0id}.
Next assume the hypothesis holds when $m \leq \tau-1$. Then we consider the case $m=\tau$. Define the quantities (given by the left and right hand sides of \eqref{eq:rightnleft} respectively),
\begin{equation}\label{keyrel1}
\mathcal{P}_l:= \sum_{q \leq l} \sum_{p} c_{\tau}^{p,q} u^{p+l - q} \frac{l!}{(l-q)!},
\end{equation}
and
\begin{equation}\label{keyrel2}
\mathcal{Q}_l := - \sum_p\sum_{r + l \geq q+s}\sum_{{j=0}}^{\tau-1} c_j^{p,q} a_{\tau- j}^{r,s} u^{p + r+l -q -s} \frac{(r+l)!}{(r+l-q-s)!}.
\end{equation}
Then we have that by \eqref{eq:rightnleft} that $\mathcal{P}_{l} = \mathcal{Q}_l$ for all multiindices $l \in \mathbb{Z}_+^n$. We prove the identity $c_m^{p,q}=0$ by embedding the two families of coefficients $\{ \mathcal{P}_{w}\}_{w \leq l}$ and $\{ \mathcal{Q}_{w}\}_{w \leq l}$ into a polynomial.  Set, for $B \in \{ \mathcal{P}, \mathcal{Q} \}$,
\begin{equation}\label{eq:Psidef}
\Psi_{l}(B):= \sum_{w \leq l}{(-1)^{|w|}  \binom{l}{w} u^w B_{l-w}}.
\end{equation}
First we compute $\Psi_{l}(\mathcal{P})$ by inserting \eqref{keyrel2} into \eqref{eq:Psidef}, carefully rearranging terms with respect to powers of $u$:
\begin{align*}
\Psi_{l}(\mathcal{P}) &= \sum_{w \leq l }{(-1)^{|w|} \binom{l}{w} u^w \mathcal{P}_{l-w}} \\
&=  \sum_{w \leq l}{(-1)^{|w|} \sum_{p }\sum_{q \leq l-w} \binom{l}{w} c_{\tau}^{p,q} u^{l+ p - q} \frac{(l-w)!}{(l-w-q)!}} \\
&=\sum_{p}\sum_{w \leq l} (-1)^{|w|} \left( \sum_{w \leq s \leq l} c_{\tau}^{p,(l-s)} u^{p+s} \binom{l}{w} \frac{(l-w)!}{(s-w)!} \right) \\
&=\sum_{p}\sum_{s \leq l} c_{\tau}^{p,(l-s)} u^{p+s}\left( \sum_{w \leq s} (-1)^{|w|} \binom{l}{w} \frac{(l-w)!}{(s-w)!} \right).
\end{align*}
Note the substitution used to obtain the second to last line is through the identification $s = l -q$, and then interchanging the order of summation yields the final line. We decompose the above summation into two pieces to conclude that
\begin{equation*}
\Psi_{l}(\mathcal{P}) =\sum_{p} c_{\tau}^{p,l}   u^{p} l! + \sum_{p}\sum_{0 \neq s \leq l} c_{\tau}^{p,(l-s)} u^{p+s}\left( \sum_{w \leq s} (-1)^{|w|} \binom{l}{w} \frac{(l-w)!}{(s-w)!} \right).
\end{equation*}
Applying Corollary \ref{binomid1},  we have that the quantity above reduces to simply
\begin{equation}\label{PsilPshort}
\Psi_{l}(\mathcal{P}) =\sum_{p} c_{\tau}^{p,l}   u^{p} l!.
\end{equation}
We next compute $\Psi_{l}(\mathcal{Q})$. We collect up terms with respect to the powers of $u$,
\begin{align}
\begin{split}\label{eq:PsilQ1}
\Psi_l(\mathcal{Q}) &=  \sum_{w \leq l }{(-1)^{|w|} \binom{l}{w} u^w \mathcal{Q}_{l-w}} \\
&=  - \sum_{w \leq l }{(-1)^{|w|} \binom{l}{w} u^w \sum_p\sum_{r + l \geq q+s+w} \sum_{{j=0}}^{\tau-1} c_j^{p,q} a_{\tau- j}^{r,s} u^{p + r+l-w -q -s} \frac{(r+l-w)!}{(r+l-w-q-s)!}}\\
&=  \sum_{{j=0}}^{\tau-1}\sum_{w \leq l }{\sum_p\sum_{r + l \geq q+s+w}  (-1)^{|w|+1} \binom{l}{w} c_j^{p,q} a_{\tau- j}^{r,s} u^{p + r+l -q -s} \frac{(r+l-w)!}{(r+l-w-q-s)!}}\\
&=\sum_{{j=0}}^{\tau-1}\sum_{q+s=r}^{r+l}{\sum_p\sum_{w\leq r+l-q-s}  (-1)^{|w|+1} \binom{l}{w} c_j^{p,q} a_{\tau- j}^{r,s} u^{p + r+l -q -s} \frac{(r+l-w)!}{(r+l-w-q-s)!}}.
\end{split}
\end{align}
For simplicity, set $\eta := r+ l -q-s$ and allow it to range $0 \leq \eta \leq l$. Updating the index of \eqref{eq:PsilQ1} we obtain
\begin{align}
\begin{split}\label{thmid1}
\Psi_l(\mathcal{Q}) &=\sum_{{j=0}}^{\tau-1}\sum_p\sum_{q+s=r}^{r+l}{\sum_{w \leq \eta}  (-1)^{|w|+1} \binom{l}{w} c_j^{p,q} a_{\tau- j}^{r,s} u^{p + \eta} \frac{(r+l-w)!}{(\eta - w)!}}\\
&=\sum_{{j=0}}^{\tau-1}\sum_p\sum_{q+s=r}^{r+l}c_j^{p,q} a_{\tau- j}^{r,s} u^{p + \eta}{\sum_{w \leq \eta}  (-1)^{|w|+1} \binom{l}{w}  \frac{(r+l-w)!}{(\eta - w)!}}.
\end{split}
\end{align}
As a result of Corollary \ref{binomid2} we then only need to consider $r \geq \eta$, otherwise the term vanishes. As a result of the induction hypothesis combined with the fact that $a_j^{r,s} = 0$ when $r+s > 2j$ (cf. Remark \ref{remacoeff}) we apply these facts to $\Psi_l(\mathcal{Q})$,
\begin{align*}
2\tau & \geq |p + q + r+s|
= |p + 2r + l - \eta|.
\end{align*}
Manipulating the above expression yields
\begin{align*}
|p+\eta| & \leq 2\tau - 2 |r| - |l| + 2 |\eta | \leq 2 \tau - 2|(r-\eta)| - |l |\leq 2\tau -|l|.
\end{align*}
Combining this fact with \eqref{thmid1} we conclude
\begin{align*}
\deg \Psi_{l}(\mathcal{Q}) \leq 2\tau - |l|.
\end{align*}
Recall that since $\mathcal{P}_l = \mathcal{Q}_l$ for all $l$ by \eqref{eq:rightnleft}, so that $\Psi_l(\mathcal{P}) = \Psi_l(\mathcal{Q})$. Noting that as a result of \eqref{PsilPshort}, we have that, the index $p$ must satisfy
\[ |p| \leq \deg \Psi_l(\mathcal{Q}) \leq 2 \tau - |l|,\]
therefore if $2 \tau< |p| + |l|$, then $c_{\tau}^{p, l} = 0$, demonstrating the desired induction step. The result follows.
\end{proof}
\end{thm}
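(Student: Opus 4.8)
The plan is to establish the degree bound $\deg c_m \le 2m$ (in the $(u,\bar v)$ variables) by induction on $m$, exploiting the same recursion \eqref{eq:exstcoeff1} that was used to construct the $c_m^{p,q}$ in the proof of Proposition \ref{coeffexist}. The base case $m=0$ is immediate from \eqref{eq:c0id}, since $c_0^{p,q}$ vanishes unless $p=q=0$. Fixing $\tau\ge 1$, I would assume as induction hypothesis that $c_j^{p,q}=0$ whenever $|p|+|q|>2j$ for all $j\le\tau-1$. Recall that \eqref{eq:exstcoeff1} can be recast as $\mathcal{P}_l=\mathcal{Q}_l$ for every multiindex $l$, where $\mathcal{P}_l$ collects the unknown coefficients $c_\tau^{p,q}$ and $\mathcal{Q}_l$ is entirely expressed through the already-known $c_j^{p,q}$ (for $j\le\tau-1$) and the volume-form coefficients $a_{\tau-j}^{r,s}$, whose degrees are controlled by Lemma \ref{finitecoeffa}.

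The key device is a finite-difference operator applied to these families: for $B\in\{\mathcal{P},\mathcal{Q}\}$ set $\Psi_l(B):=\sum_{w\le l}(-1)^{|w|}\binom{l}{w}u^w B_{l-w}$. First I would compute $\Psi_l(\mathcal{P})$: substituting $\mathcal{P}_{l-w}$, reindexing with $s=l-q$, and interchanging the order of summation, the inner sum over $w$ becomes $\sum_{w\le s}(-1)^{|w|}\binom{l}{w}\tfrac{(l-w)!}{(s-w)!}$, which by Corollary \ref{binomid1} vanishes for every nonzero $s\le l$. Hence all cross terms cancel and $\Psi_l(\mathcal{P})$ collapses to $l!\sum_p c_\tau^{p,l}u^p$, precisely the coefficient we wish to show vanishes in high degree.

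Next I would compute $\Psi_l(\mathcal{Q})$ and bound its degree. Collecting terms by powers of $u$ and setting $\eta:=r+l-q-s$ (ranging over $0\le\eta\le l$), the relevant inner sum over $w$ is $\sum_{w\le\eta}(-1)^{|w|+1}\binom{l}{w}\tfrac{(r+l-w)!}{(\eta-w)!}$, which by Corollary \ref{binomid2} vanishes unless $r\ge\eta$ coordinatewise. For the surviving terms, combining the induction hypothesis $|p|+|q|\le 2j$ with Lemma \ref{finitecoeffa} (so $a_{\tau-j}^{r,s}=0$ unless $|r|+|s|\le 2(\tau-j)$) gives $|p+q+r+s|\le 2\tau$; writing $q+s=r+l-\eta$ and using $|r|\ge|\eta|$ then yields $|p+\eta|\le 2\tau-|l|$, so $\deg\Psi_l(\mathcal{Q})\le 2\tau-|l|$. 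Since $\mathcal{P}_l=\mathcal{Q}_l$ forces $\Psi_l(\mathcal{P})=\Psi_l(\mathcal{Q})$, comparison with the collapsed form of $\Psi_l(\mathcal{P})$ shows $c_\tau^{p,l}=0$ whenever $|p|+|l|>2\tau$, which is the claim for $m=\tau$ and completes the induction.

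The hard part — the only place where anything beyond careful bookkeeping is needed — is arranging the two combinatorial identities (Corollaries \ref{binomid1} and \ref{binomid2}, built from their single-variable counterparts Lemmas \ref{bimonid1s} and \ref{binomid2s}) in exactly the shape that makes $\Psi_l$ do double duty: on the left it must annihilate every $q$-degree below the top one so that only $c_\tau^{p,l}$ survives, and on the right it must kill all terms with $r\not\ge\eta$ so that the degree estimate can close. These identities are proved most transparently by encoding the sums as coefficients of $(1+x)^{l-w}$-type polynomials and invoking the binomial theorem, with the multiindex versions following by factoring over coordinates and using that a nonzero multiindex has a nonzero coordinate. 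Once the identities are in hand, the $\eta$-substitution, the reindexing, and the chain of multiindex inequalities in the degree estimate are all routine.
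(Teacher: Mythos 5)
Your proposal is correct and follows essentially the same route as the paper's own proof: the same induction on $m$, the same finite-difference operator $\Psi_l$, the same use of Corollaries \ref{binomid1} and \ref{binomid2} to collapse $\Psi_l(\mathcal{P})$ and prune $\Psi_l(\mathcal{Q})$, and the same degree estimate via the substitution $\eta = r+l-q-s$. No gaps to report.
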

\section{Computation of the coefficients}\label{compofcoeff}
We next explicitly compute the coefficient $c_1$ and $c_2$ of $K^{loc}$ under Bochner coordinates (the coefficients $c_0$ was computed in Lemma \ref{csub0}.

To compute $c_1$ and $c_2$ we require preliminary terms of the \k potential as well as the coefficients $a_m^{r,s}$.

\begin{prop}[Expansion of \k potential]\label{expkahlerpot}
We have the following series expansion of the potential $\varphi$ under Bochner coordinates is given by
\begin{align*}
\varphi(z) &= |z|^2 - \frac{\Rm_{i\oo j k\oo l}(0)}{4}z^iz^k\oo z^j \oo z^l + O(|z|^5) \\
&= |z|^2 + R(z,\bar{z}).
\end{align*}
\end{prop}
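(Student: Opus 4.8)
The plan is to compute the Bochner-coordinate expansion of $\varphi$ directly from the defining properties of Bochner coordinates, namely that $\varphi(z) = |z|^2 + R(z)$ with $R(z) = O(|z|^4)$, together with the standard relation between the K\"ahler metric $g_{i\bar j} = \partial_i\partial_{\bar j}\varphi$ and its curvature. First I would write the Taylor expansion of $\varphi$ about $0$ to fourth order. The linear and quadratic purely-holomorphic (and purely-antiholomorphic) terms can be removed by a holomorphic change of coordinates, which is exactly what is done in passing to Bochner coordinates; the mixed quadratic term is normalized to $|z|^2$, meaning $g_{i\bar j}(0) = \delta_{ij}$. At third order, the terms of type $(3,0)$, $(0,3)$, $(2,1)$ and $(1,2)$: the first two are again killed by the holomorphic normalization, and the Bochner condition $R = O(|z|^4)$ forces the $(2,1)$ and $(1,2)$ coefficients to vanish as well (equivalently, $\partial_k g_{i\bar j}(0) = 0$). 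So the first nontrivial term of $R$ is at fourth order, and among the fourth-order terms only the $(2,2)$-type term $\sum a_{ikjl}\, z^i z^k \bar z^j \bar z^l$ survives after the holomorphic normalization is used to remove the $(4,0), (3,1), (1,3), (0,4)$ pieces.

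Next I would identify the coefficient of the surviving $(2,2)$ term with the curvature. Differentiating $\varphi(z) = |z|^2 + \sum_{i,k,j,l} a_{ikjl} z^i z^k \bar z^j \bar z^l + O(|z|^5)$ twice in $z$ and twice in $\bar z$ and evaluating at $0$ gives $\partial_k\partial_{\bar l} g_{i\bar j}(0)$ in terms of the $a_{ikjl}$ (with appropriate symmetrization over $i\leftrightarrow k$ and $j\leftrightarrow l$). On the other hand, the curvature formula \eqref{curvatureconv} at the origin, using $g_{i\bar j}(0) = \delta_{ij}$ and $\partial_k g_{i\bar j}(0) = 0$, reduces to $\Rm_{i\bar j k\bar l}(0) = -\partial_k\partial_{\bar l} g_{i\bar j}(0)$ since the second (Christoffel-squared) term vanishes to this order. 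Matching the two expressions and solving for the coefficient yields $a_{ikjl} = -\tfrac14 \Rm_{i\bar j k\bar l}(0)$ after accounting for the combinatorial factor $2\times 2 = 4$ coming from the symmetrization of $z^i z^k$ and $\bar z^j\bar z^l$; this reproduces the stated formula $\varphi(z) = |z|^2 - \tfrac14 \Rm_{i\bar j k\bar l}(0) z^i z^k \bar z^j \bar z^l + O(|z|^5)$.

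The main obstacle — really the only subtle point — is bookkeeping the symmetrization factors correctly and making sure the Bochner normalization has indeed been used to kill \emph{all} the non-$(2,2)$ fourth-order terms (and the unwanted third-order terms), so that the quoted expansion is genuinely the full fourth-order jet and not merely its $(2,2)$-part. One must be careful that ``$R(z) = O(|z|^4)$'' in the definition of Bochner coordinates is the statement that eliminates the $(2,1)$ and $(1,2)$ cubic terms, and that the freedom in choosing Bochner coordinates (a unitary rotation plus higher-order holomorphic corrections) does not affect the $(2,2)$ coefficient, which is curvature and hence coordinate-independent at the center. Once these normalizations are in place the computation is the routine Taylor-matching described above. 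Finally, since Bochner coordinates exist and vary smoothly with the base point, this expansion holds in a smooth family centered at $x_0$, which is what is needed for the $C^m$ statement in Theorem \ref{mainresult}.
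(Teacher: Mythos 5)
Your proposal is correct. The paper in fact states Proposition \ref{expkahlerpot} without proof, treating it as a standard fact about B\"ochner coordinates, so there is no argument in the text to compare against; your Taylor-matching computation is the standard way to establish it, and the bookkeeping checks out: writing the $(2,2)$ part of $R$ as $\sum a_{ikjl}z^i z^k\oo z^j\oo z^l$ with $a$ symmetric in $(i,k)$ and in $(j,l)$, one gets $\del_k\del_{\oo l}g_{i\oo j}(0)=4a_{ikjl}$, while \eqref{curvatureconv} together with $\del g(0)=0$ gives $\Rm_{i\oo jk\oo l}(0)=-\del_k\del_{\oo l}g_{i\oo j}(0)$, hence $a_{ikjl}=-\tfrac14\Rm_{i\oo jk\oo l}(0)$ as claimed. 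The one point you rightly flag deserves emphasis: the paper's stated definition of B\"ochner coordinates only says $R=O(|z|^4)$, which by itself does not exclude fourth-order terms of type $(4,0)$, $(3,1)$ and their conjugates; the proposition needs the full B\"ochner normalization, in which the frame change $\vp\mapsto\vp-f-\oo f$ kills all pure terms $z^\ga$, $\oo z^\ga$ and a holomorphic coordinate change kills all terms $z^\ga\oo z^j$ with $|\ga|\geq 2$ (and conjugates), so that every monomial in $R$ has bidegree at least $(2,2)$. Granting that (which is what the cited construction of B\"ochner coordinates provides), your argument is complete, and the smooth dependence on the center point follows from the smooth family of B\"ochner coordinates as you say.
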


\begin{lemma}[Properties of $e^{-kR}\Omega$ expansion]\label{remacoeff}
The expansion up to $\frac{1}{k}$ for $\sum_{m}\sum_{p,q}\frac{a_m^{p,q}v^p\oo v^q}{\sqrt{k^m}}$ is
\begin{equation*}\label{eq:remacoeff}
e^{-k R\left(\frac{v}{\sqrt{k}} \right)} \Omega \left(\tfrac{v}{\sqrt{k}} \right) = 1 - \tfrac{1}{k} \left(\Ric_{k\oo l}v^k \oo v^l - \tfrac{1}{4} \Rm_{i\oo j k\oo l}(0) v^iv^k\oo v^j \oo v^l \right),
\end{equation*}
where the numbers $a_j^{p,q}$ for $j=0, 1, 2$ are given by
\begin{equation*}
a_0^{p,q} =
\begin{cases}
1 &\text{ if } |p| = |q| = j = 0 \\
 0 & \text{ otherwise},
\end{cases}
\end{equation*}
\begin{equation*}
a_1^{p,q} = 0 \ \ \ \text{ for all } p,q,
\end{equation*}
and lastly
\begin{equation*}
a_2^{p,q} =
\begin{cases}
- \sum_{k,l} \Ric_{k \bar{l}} &\text{ if } |p| = |q| =1 \\
 \tfrac{1}{4} \sum_{k,l} \Ric_{i \bar{j} k \bar{l}}(0) &\text{ if } |p| = |q| =2\\
 0 & \text{ otherwise}.
\end{cases}
\end{equation*}

\begin{proof}
We expand each quantity of the product on the left hand side of \eqref{eq:remacoeff}. First, for the exponential term we have
\begin{equation*}
e^{-kR\left(\frac{v}{\sqrt{k}} \right)} = 1 + \frac{ \Rm_{i\oo j k\oo l}(0)}{4k}v^iv^k\oo v^j \oo v^l + o(k^{-\frac{3}{2}}).
\end{equation*}
And the determinant quantity becomes
\begin{align*}
\Omega \left(\frac{v}{\sqrt{k}} \right) &= \det\left( \delta_{i\oo j} + \frac{1}{k}\frac{\dd^4 \varphi_{i\oo j}}{\dd z^k \dd \oo z^l}(0)v^k\oo v^l  + O\left(k^{-\frac{3}{2}} \right) \right) \\
&= 1 - \frac{1}{k}\Ric_{k\oo l}v^k \oo v^l + O\left(k^{-\frac{3}{2}} \right).
\end{align*}
The result follows.
\end{proof}
\end{lemma}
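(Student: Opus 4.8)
The plan is to obtain the claimed expansion by Taylor-expanding the two factors $e^{-kR(v/\sqrt k)}$ and $\Omega(v/\sqrt k)$ separately, multiplying them, and collecting powers of $k^{-1/2}$ to order $k^{-1}$; the coefficients $a_0^{p,q},a_1^{p,q},a_2^{p,q}$ are then read off by matching with $\sum_m\sum_{p,q}a_m^{p,q}v^p\oo v^q k^{-m/2}$. That this product really is a formal power series in $k^{-1/2}$, with order-$k^{-m/2}$ coefficient a polynomial of degree $\leq 2m$, is already supplied by \eqref{eq:acoeff} together with the weight formalism of \S\ref{proofofprop} (cf. Lemma \ref{finitecoeffa}), so it remains only to compute the first three coefficients.

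First I would treat the exponential factor. By Proposition \ref{expkahlerpot}, in B\"ochner coordinates $R(z)=-\tfrac14\Rm_{i\oo jk\oo l}(0)z^iz^k\oo z^j\oo z^l+O(|z|^5)$, hence $kR(v/\sqrt k)=-\tfrac1{4k}\Rm_{i\oo jk\oo l}(0)v^iv^k\oo v^j\oo v^l+O(k^{-3/2})$; since $\bigl(kR(v/\sqrt k)\bigr)^2=O(k^{-2})$, exponentiating gives
\begin{equation*}
e^{-kR(v/\sqrt k)}=1+\tfrac1{4k}\Rm_{i\oo jk\oo l}(0)v^iv^k\oo v^j\oo v^l+O(k^{-3/2}),
\end{equation*}
and in particular there is no $k^{-1/2}$ term. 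Next the determinant. Writing $g_{i\oo j}:=\del_i\del_{\oo j}\vp$ and differentiating the expansion of $\vp$ from Proposition \ref{expkahlerpot} term by term, one finds $g_{i\oo j}(z)=\delta_{i\oo j}-\Rm_{i\oo jk\oo l}(0)z^k\oo z^l+O(|z|^3)$; here the prefactor $\tfrac14$ is cancelled by the four equal terms that appear, via the symmetries $\Rm_{i\oo jk\oo l}=\Rm_{k\oo ji\oo l}=\Rm_{i\oo lk\oo j}$, when the two holomorphic and two antiholomorphic derivatives are distributed over $z^az^c\oo z^b\oo z^d$ (equivalently, this is the curvature formula \eqref{curvatureconv} at the origin, where the $g^{-1}(\del g)(\del g)$ term drops out since $\del g(0)=0$). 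Then, using $\det(\Id+A)=1+\tr A+O(|A|^2)$ with $A=O(|z|^2)$, together with $g^{k\oo l}(0)=\delta^{k\oo l}$ and $\Ric_{k\oo l}(0)=\sum_i\Rm_{i\oo ik\oo l}(0)$,
\begin{equation*}
\Omega(z)=1-\Ric_{k\oo l}(0)z^k\oo z^l+O(|z|^3),\qquad\text{so}\qquad \Omega(v/\sqrt k)=1-\tfrac1k\Ric_{k\oo l}(0)v^k\oo v^l+O(k^{-3/2}).
\end{equation*}

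Finally, multiplying the two expansions and discarding terms of order $k^{-3/2}$ and smaller yields $e^{-kR(v/\sqrt k)}\Omega(v/\sqrt k)=1-\tfrac1k\bigl(\Ric_{k\oo l}(0)v^k\oo v^l-\tfrac14\Rm_{i\oo jk\oo l}(0)v^iv^k\oo v^j\oo v^l\bigr)+O(k^{-3/2})$, which is precisely the asserted expansion; comparing $v^p\oo v^q$-coefficients order by order then gives $a_0^{p,q}=1$ for $p=q=0$ and $0$ otherwise, $a_1^{p,q}=0$ for all $p,q$ (there is no $k^{-1/2}$ term, because $R$ has no monomial of degree $<4$ and $\Omega-1$ none of degree $<2$, so $e^{-kR(v/\sqrt k)}\Omega(v/\sqrt k)-1=O(k^{-1})$), and $a_2^{p,q}$ vanishing unless $|p|=|q|\in\{1,2\}$, with $a_2^{e_k,e_l}=-\Ric_{k\oo l}(0)$ and $a_2^{e_i+e_k,\,e_j+e_l}=\tfrac14\Rm_{i\oo jk\oo l}(0)$ ($e_i$ the standard unit multiindices), recovering the stated values. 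The computation is otherwise routine; the one step that demands care is the identification $g_{i\oo j}(z)=\delta_{i\oo j}-\Rm_{i\oo jk\oo l}(0)z^k\oo z^l+O(|z|^3)$ — keeping the combinatorial symmetry factor straight when passing from $\vp$ to its complex Hessian — and I expect that (mild) bookkeeping to be the only real obstacle.
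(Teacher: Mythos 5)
Your proposal is correct and follows essentially the same route as the paper: expand $e^{-kR(v/\sqrt{k})}$ and $\Omega(v/\sqrt{k})$ separately to order $k^{-1}$, multiply, and read off the coefficients. You in fact supply the details (the symmetry-factor cancellation in $g_{i\oo j}$ and the $\det(\Id+A)=1+\tr A+O(|A|^2)$ step) that the paper's proof leaves implicit.
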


The computation of $c_1$ is now immediate.

\begin{cor}\label{csub1is0}
For all $p,q \in \mathbb{Z}_+$ we have $c_1^{p,q} = 0$.

\begin{proof}
By comparing coefficients in \eqref{eq:reppropmon}, we see that there is no contribution from $a_1$ for the $\frac{1}{\sqrt{k}}$ term, hence
\begin{equation*}
\frac{c_1^{p,q}}{\sqrt{k}} u^{p+l-q} = 0,
\end{equation*}
for any $l$.  The result follows.
\end{proof}
\end{cor}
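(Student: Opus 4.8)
The plan is to extract the coefficient of $k^{-1/2}$ from the reproducing identity \eqref{eq:reppropmon} and to exploit the vanishing of $a_1^{r,s}$ recorded in Lemma \ref{remacoeff}. Equivalently, I would specialize the recursion \eqref{eq:exstcoeff1}, derived in the proof of Proposition \ref{coeffexist}, to $\tau = 1$: its right-hand side is then a single sum over $j = 0$, hence is built entirely out of the numbers $a_1^{r,s}$, all of which vanish by Lemma \ref{remacoeff}. (Concretely, the two contributions to the $k^{-1/2}$ term in \eqref{eq:reppropmon} come from the pairs $(m,j) = (1,0)$ and $(m,j) = (0,1)$; the former carries the factor $a_1^{r,s} \equiv 0$, while the latter carries $a_0^{r,s}$, which by Lemma \ref{remacoeff} is nonzero only at $r = s = 0$, where it equals $1$.) The upshot is the single relation
\[
\sum_{q \le l} \sum_p c_1^{p,q}\, u^{p + l - q}\, \frac{l!}{(l-q)!} = 0
\]
for every multiindex $l$.

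From this relation I would deduce $c_1^{p,q} = 0$ by the same sub-induction on $|q|$ used in Lemma \ref{csub0}. For $|l| = 0$ the displayed identity reads $\sum_p c_1^{p,0} u^p = 0$, so comparing coefficients of $u^p$ forces $c_1^{p,0} = 0$ for all $p$. Assuming $c_1^{p,q} = 0$ whenever $|q| \le \lambda - 1$, take $|l| = \lambda$: among the terms with $q \le l$ the only one with $|q| = \lambda$ is $q = l$, and all terms with $|q| \le \lambda - 1$ vanish by the inductive hypothesis, so the identity collapses to $\sum_p c_1^{p,l} u^p = 0$, whence $c_1^{p,l} = 0$. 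This closes the induction and yields $c_1^{p,q} = 0$ for all $p,q$.

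There is essentially no obstacle intrinsic to this corollary; all the content sits in the fact that $a_1^{r,s} \equiv 0$, which by Lemma \ref{remacoeff} is itself a consequence of the absence of a cubic term in the \k potential in B\"ochner coordinates (Proposition \ref{expkahlerpot}). The only mild care needed is the bookkeeping of which $(m,j)$ pairs feed the $k^{-1/2}$ coefficient and recycling verbatim the $|q|$-induction already carried out in Lemma \ref{csub0}. Note that $c_0$ is already pinned down by that lemma, so even the $(1,0)$ contribution would be an explicit known quantity; the vanishing of $a_1$ simply removes it outright, making the reduction immediate.
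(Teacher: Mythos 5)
Your proposal is correct and follows essentially the same route as the paper: isolate the $k^{-1/2}$ coefficient in \eqref{eq:reppropmon}, observe that the $(m,j)=(1,0)$ contribution vanishes because $a_1^{r,s}\equiv 0$, and conclude from the surviving $(0,1)$ term. Your write-up is in fact more careful than the paper's terse proof, since you make explicit the sub-induction on $|q|$ needed to pass from the relation $\sum_{q\le l}\sum_p c_1^{p,q}u^{p+l-q}\frac{l!}{(l-q)!}=0$ to the vanishing of each individual coefficient.
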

\subsection{Computing the coefficient $c_2$}
By applying equation \eqref{eq:reppropmon} to $f(z) = 1$ we obtain $c_2^{0,0}$:
\begin{equation*}
1 = \int_{\mathbb{C}^n} e^{u\cdot\bar{v} - |v|^2}\left( 1 + \frac{c_2^{0,0}}{k} + \frac{c^{(i),(j)}_2}{k}u^i\oo v^j + \frac{c^{(i,k),(j,l)}_2}{k}u^iu^k\oo v^j \oo v^l \right) \left(1 -  \tfrac{1}{k} \left(\Ric_{k\oo l}v^k \oo v^l - \tfrac{1}{4} \Rm_{i\oo j k\oo l}(0) v^iv^k\oo v^j \oo v^l \right) \right)dV.
\end{equation*}
Collecting the $\frac{1}{k}$ terms, we obtain
\begin{equation*}
c_2^{0,0} = \int_{\mathbb{C}^n}\Ric_{i\oo j}v^i\oo v^j e^{u\oo v -|v|^2}dV - \frac{1}{4}\int_{\mathbb{C}^n}\Rm_{i\oo jk\oo l}v^iv^k\oo v^j \oo v^l e^{u\oo v -|v|^2}dV.
\end{equation*}
The first integral on the right hand side is nonzero when $i = j$. The left side is nonzero when $i = j, k = l$ and $i = l, j = k$.  We therefore obtain
\begin{equation*}
c_2^{0,0} = \frac{\rho}{2}.
\end{equation*}
Next to obtain the $c^{(i)(j)}_2$ coefficient, we apply equation (\ref{eq:reppropmon}) with $f= v^\alpha$ to obtain
\begin{equation*}
\sum_{i}c^{(i)(\alpha)}_2 u^i = \int_{\mathbb{C}^n}\left(\Ric_{k\oo l}v^kv^{\alpha}\oo v^l - \frac{1}{4}\Rm_{i\oo jk\oo l}v^iv^kv^\alpha \oo v^j\oo v^l\right)e^{u\cdot v - |v|^2}dV.
\end{equation*}
The first term on the right hand side is nonzero when $\alpha = l$, hence the only relevant term after integration is
\begin{equation*}
\int_{\mathbb{C}^n}\Ric_{i\oo \alpha}v^i|v^{\alpha}|^2 e^{u\cdot v - |v|^2}dV = \sum_i\Ric_{i\oo \alpha}u^i.
\end{equation*}
The second term splits into four cases:
\begin{enumerate}
\item $\alpha = j, i = l$.
\item $\alpha = j, k = l$.
\item $\alpha = l, i = j$.
\item $\alpha = l, k = j$.
\end{enumerate}
In each case, after integration, we obtain the term
\begin{equation*}
\int_{\mathbb{C}^n}\Rm_{i\oo \alpha,k,\oo i}v^k|v^i|^2|v^{\alpha}|^2 e^{u\cdot v - |v|^2}dV = \sum_{k}\Ric_{k\oo \alpha}u^k,
\end{equation*}
and similar computations for the other cases, hence
\begin{equation*}
c_2^{(i)(\alpha)} = 0.
\end{equation*}
Next to obtain the $c^{(ik),(jl)}_2$ coefficient, we apply equation \eqref{eq:reppropmon} with $f= v^\alpha v^\beta$ to obtain
\begin{equation*}
2\sum_{i,k}c^{(ik),(jl)}_2 u^i u^k = \int_{\mathbb{C}^n}\left(\Ric_{k\oo l}v^kv^{\alpha}v^\beta \oo v^l - \frac{1}{4}\Rm_{i\oo jk\oo l}v^iv^kv^\alpha v^\beta \oo v^j\oo v^l\right)e^{u\cdot v - |v|^2}dV.
\end{equation*}
For the first term on the right hand side, it is not possible to sum over two variables, hence is an irrelevant term.  The second term has two cases:
\begin{enumerate}
\item $j = \alpha, l = \beta$.
\item $l = \alpha, j = \beta$.
\end{enumerate}
Hence we have
\begin{equation*}
c^{(ik),(\alpha\beta)}_2 = -\frac{1}{4}R_{i\oo\alpha k\oo\beta}.
\end{equation*}
Note that the result matches with \cite{lushiff} except for the emergence of non-analytic terms, however the computations in Lu and Shiffman were done for the lifted Szeg\"o kernel.
\section{Higher order convergence}
As the $C^m$ norms depend on the choice of coordinates, we must give some care when discussing the convergence in higher order.  The local kernel that we have constructed is an expansion at one point $p \in M$.  We now show the regularity of the local kernel depending on the point $p$.  

We have shown that at a point $p \in M$
\begin{equation*}
|K_k(p + z, p + w) - K_{(N)}^{loc}(p,z,w)| \leq \frac{C_{p,N}}{k^{N+1-2n}}, \ \ \ \ d(z,w) < \frac{1}{\sqrt{k}}.
\end{equation*}
In fact, the $C_{p,N}$ depends on the local potential, that is,
\begin{equation*}
C_{p,N} \leq \sup_{\underset{x\in B_p(2\delta)}{|\alpha| \leq \alpha(N)}} |D^\alpha \varphi(x)|
\end{equation*}

We first would like to show that given a point $q \in B_p(\delta)$, the constant $C_{p,N}$ is uniform in that neighborhood, i.e.

\begin{equation*}
|K_k(q + z, q + w) - K_{(N)}^{loc}(q,z,w)| \leq \frac{C_{p,N}}{k^{N+1-2n}}, \ \ \ \ d(z,w) < \frac{1}{\sqrt{k}}.
\end{equation*}

Consider a smooth family of B\"ochner coordinates.  The existence of such a coordinate is given, for example in \cite{liulu}. Then consider a finite cover of $M$ by $B_p(2\delta)$ of fixed radius.  Then for $q \in B_p(2\delta)$, we have
\begin{equation*}
\sup_{B_q(\delta)}|D^\alpha \varphi| \leq C \sup_{B_p(2\delta)}|D^\alpha \varphi|,
\end{equation*}
where $C$ is independent of $q$, and the derivatives $D^\alpha$ on the left correspond to the B\"ochner coordinates centered at $p$ and the right corresponds to the B\"ochner coordinates centered at $q$.

To show the convergence for higher order derivatives with respect to the variable $p$, we first apply the B\"ochner-Martinelli formula to the difference of the local kernel and the global kernel.

We recall that
\begin{lemma}[B\"ochner-Martinelli kernel]\label{bochmart}
For $w,z \in \mathbb{C}^n$, we define the Bochner-Martinelli kernel, $M(w,z)$ 
\begin{equation*}
M(w,z) = \frac{(n-1)!}{(2\pi\sqrt{-1})^n}\frac{1}{|z-w|^{2n}}\sum_{1\leq j \leq n}(\oo w^j-\oo z^j)d\oo{w}^1\wedge dw^1 \wedge \cdots \wedge dw^j \wedge \cdots \wedge d\oo{w}^n \wedge dw^n
\end{equation*}
Suppose that $f \in C^\infty(D)$ where $D$ is a domain in $\mathbb{C}^n$ with piecewise smooth boundary.  Then for $z \in D$,
\begin{equation*}
f(z) = \int_{\dd D} f(w)M(w,z) - \int_D \dbar f(w)\wedge M(w,z).
\end{equation*}
\end{lemma}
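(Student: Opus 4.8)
The plan is to run the classical Stokes-theorem argument: establish that $M(\cdot,z)$ is $d$-closed in $w$ away from the diagonal, excise a small ball around $z$, apply Stokes, and then let the ball shrink.

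\textbf{Step 1 (properties of the kernel).} I would first record two facts. \emph{(i) Closedness:} as a form in $w$ on $\bCn \setminus \{z\}$, $M(w,z)$ is $d$-closed. Since $M$ has bidegree $(n,n-1)$ in $w$, the piece $\partial_w M$ is an $(n+1,n-1)$-form and hence vanishes automatically, so it suffices to check $\dbar_w M = 0$. Applying $\dbar_w$ termwise, for the $j$-th summand only the $\partial/\partial\bar w^j$ derivative survives the wedge (the remaining $d\bar w^i$, $i\neq j$, are already present), and using
\[
\frac{\partial}{\partial \bar w^j}\,\frac{\bar w^j - \bar z^j}{|w-z|^{2n}} = \frac{1}{|w-z|^{2n}} - \frac{n\,|w^j - z^j|^2}{|w-z|^{2n+2}},
\]
the sum over $j$ collapses to $\frac{n}{|w-z|^{2n}} - \frac{n}{|w-z|^{2n}} = 0$ for $w \neq z$ (using $\sum_j |w^j-z^j|^2 = |w-z|^2$). \emph{(ii) Normalization:} $\int_{\partial B_z(\epsilon)} M(w,z) = 1$ for every $\epsilon > 0$; by (i) this integral is independent of $\epsilon$ (Stokes on an annulus), and its value is the standard Cauchy--Fantappiè normalization, obtained by identifying the restriction of $M$ to the sphere with the rotation-invariant probability measure, using $\Vol(S^{2n-1}) = 2\pi^n/(n-1)!$.

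\textbf{Step 2 (Stokes on the punctured domain).} Fix $z \in D$ and, for small $\epsilon > 0$, set $D_\epsilon := D \setminus \overline{B_z(\epsilon)}$. On $D_\epsilon$ the form $M(\cdot,z)$ is smooth and closed, so $d(f\,M) = df \wedge M = \dbar f \wedge M$, the $\partial f \wedge M$ contribution being an $(n+1,n-1)$-form and hence zero. The boundary of $D_\epsilon$ consists of $\partial D$ together with $\partial B_z(\epsilon)$ taken with the orientation opposite to the usual outward one, so Stokes' theorem gives
\[
\int_{D_\epsilon} \dbar f \wedge M = \int_{\partial D} f\,M - \int_{\partial B_z(\epsilon)} f\,M.
\]

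\textbf{Step 3 (passing to the limit).} Let $\epsilon \to 0$. The coefficient of $\dbar f \wedge M$ is $O(|w-z|^{1-2n})$, which is integrable on the $2n$-real-dimensional domain $D$, so dominated convergence gives $\int_{D_\epsilon} \dbar f \wedge M \to \int_D \dbar f \wedge M$. For the spherical integral, write $f(w) = f(z) + (f(w) - f(z))$; the first term contributes $f(z)\int_{\partial B_z(\epsilon)} M = f(z)$ by the normalization, while the second is bounded by $\sup_{|w-z|=\epsilon}|f(w) - f(z)| \cdot \int_{\partial B_z(\epsilon)}|M| \leq C\epsilon \cdot O(1) \to 0$. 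Rearranging yields $f(z) = \int_{\partial D} f\,M - \int_D \dbar f \wedge M$.

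The genuinely delicate points are the $\dbar_w M = 0$ computation in Step 1 and, in Step 3, the two quantitative facts that $\int_{\partial B_z(\epsilon)}|M|$ stays bounded as $\epsilon \to 0$ while the diagonal singularity of $\dbar f \wedge M$ remains integrable; once these are secured the argument is routine. Since this is a classical integral representation formula, one could alternatively simply cite it from a standard text, but the self-contained argument above is short enough to include.
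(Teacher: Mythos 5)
Your proof is correct. Note that the paper itself offers no proof of this lemma: it is introduced with ``We recall that'' and used as a standard citation-level fact, so there is nothing in the paper to compare against beyond the statement. Your argument is the classical excision-plus-Stokes proof, and the two computations you flag as delicate both check out: the identity $\partial_{\bar w^j}\bigl((\bar w^j-\bar z^j)|w-z|^{-2n}\bigr)=|w-z|^{-2n}-n|w^j-z^j|^2|w-z|^{-2n-2}$ does sum to zero over $j$ (and the sign bookkeeping is harmless because moving $d\bar w^j$ into its slot passes an even number of one-forms), and the normalization $\int_{\partial B_z(\epsilon)}M=1$ follows most quickly by applying Stokes on the ball to the polynomial part of $M$, whose exterior derivative is $n\,(2\sqrt{-1})^n\,dV_{\mathrm{Leb}}$, giving $\frac{(n-1)!\,n}{\pi^n\epsilon^{2n}}\cdot\frac{\pi^n\epsilon^{2n}}{n!}=1$ --- consistent with your spherical-measure argument. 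The singularity estimates in Step 3 ($|M|=O(|w-z|^{1-2n})$ against surface measure $O(\epsilon^{2n-1})$, and integrability of $|w-z|^{1-2n}$ in real dimension $2n$) are also right. The only caveat is one inherited from the lemma as stated: for the Stokes step and the boundary integral to make sense, $f$ must be $C^1$ up to $\overline{D}$ (and $D$ bounded), not merely smooth on the open set $D$; you should state that hypothesis explicitly, as the paper's formulation is slightly loose on this point.
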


Now let $p \in M$ and consider B\"ochner coordinates $(z^1,\cdots,z^n)$ centered at $p$.  The Bergman kernel and the local kernel are both objects that depend on the base point and two arguments,i.e.
\begin{equation*}
K_k(p,z,w) := K_k(p+z, p+w)
\end{equation*} 
By polarizing in the $p$ variable and considering the almost holomorphic extension, we may view the kernel as 
\begin{equation*}
K_k(p,q,z,w) := K_k(p+z,q+w)
\end{equation*}
Let 
\begin{equation*}
f_k(p,q,z,w) = K_k(p,q,z,w) - K_{(N)}^{loc}(p,q,z,w)
\end{equation*}
be the difference between the global and local kernel.  Note that $f_k$ is defined for $q,p+z,q+w \in B_p(\frac{1}{\sqrt{k}})$.  From our previous result, we have
\begin{equation*}
|f_k(p,p,z,w)| \leq \frac{C_{p,N}}{k^{{N+1-2n}}}, \ \ \ \ d(z,w) < \frac{1}{\sqrt{k}}.
\end{equation*}
 
We want to estimate
\begin{equation*}
|\dd_p^\alpha f_k(p,q)|
\end{equation*}
for $q = p$, where we suppress the $z,w$ variable because it it not essential to the argument. Applying Lemma \ref{bochmart} to $\dd_p^\alpha f_k(p,q)$ with $D = B_p(\frac{1}{\sqrt{k}}) \times B_q(\frac{1}{\sqrt{k}})$, we obtain
\begin{equation*}
\dd_p^\alpha f_k(p,q) =  \int_{\dd D} f_k(p',q')\dd_p^\alpha M(p',q',p,q) - \int_D \dbar f_k(p',q')\wedge \dd_p^\alpha M(p',q',p,q).
\end{equation*}
The boundary integral term can be bounded by the $\mathcal{L}^\infty$-norm of $f_k$ multiplied by $\sqrt{k}^{-|\alpha|}$.  By using the fact that $f_k$ is an almost holomorphic extension, $\dbar \dd^\alpha_p f_k$ in the second integral is bounded by $O_{\alpha}(|q'-p'|^\infty)$. When $p=q$, we have $d(p',q') < \frac{1}{\sqrt{k}}$, and therefore the second integral is of order $O(k^{-\infty})$.

Now we show the higher order convergence with respect to the $z,w$ variable. We rescale $z \mapsto \frac{u}{\sqrt{k}}$ and $w \mapsto \frac{v}{\sqrt{k}}$ to match the notation as in the statement of our theorem. Since the local kernel $K^{loc}_N$ and the global Bergman kernel are holomorphic in $u$ and anti-holomorphic in $v$, the derivatives can be bounded by the $L^\infty$-norms using Cauchy estimates. More precisely, let $D_x$ be any first order differential operator of $x$.  By using the Cauchy estimates on $K^{loc}_N(x,y)$ and $\tilde{K}(x,y)$ on the ball of radius $\frac{1}{\sqrt{k}}$, we obtain
\begin{align*}
\left|D_x(K^{loc}_N(x,\tfrac{v}{\sqrt{k}}) - \tilde{K}(x,\tfrac{v}{\sqrt{k}}))\right| &\leq C\sqrt{k}\|K^{loc}_N(\cdot, \tfrac{v}{\sqrt{k}}) - \tilde{K}(\cdot, \tfrac{v}{\sqrt{k}})\|_{\mathcal{L}^\infty(B(k^{-(1/2)}))} \\
& = O(k^{2n+\tfrac{1}{2}-\frac{N+1}{2}}).
\end{align*}
The above holds for $x \in B(\tfrac{1}{2}k^{-1/2})$, hence we have
\begin{equation*}
\left|D_x(K^{loc}_N(\tfrac{u}{\sqrt{k}},\tfrac{v}{\sqrt{k}}) - \tilde{K}(\tfrac{u}{\sqrt{k}},\tfrac{v}{\sqrt{k}}))\right| = O(k^{2n+\tfrac{1}{2}-\frac{N+1}{2}}).
\end{equation*}
By similar argument, we can obtain the same estimates for the holomorphic variables $\bar{y}$. 
% Note that we choose the ball of radius $k^{-1/2}$ in order to use $\mathcal{L}^\infty$ estimate of the Bergman kernel.  

Now let $D^\alpha$ be any $\alpha$-th degree differential operator with respect to $x$ or $\bar{y}$. By iterating the previous argument, we obtain the following
\begin{equation*}
|D^\alpha(K^{loc}_N - K)| \leq O(k^{\tfrac{|\alpha|}{2}+2n-\tfrac{N+1}{2}}).
\end{equation*}
Hence we obtain the smooth convergence of the Bergman kernel asymptotics.

\section{Appendix}\label{s:appendix}
In this appendix we discuss the Bargmann-Fock space $\mathcal{F} $. It is the space of entire functions that satisfy the weighted square integrability condition:
\begin{equation*}
\int_{\mathbb{C}^n}|f(z)|^2e^{-|z|^2}dV < \infty.
\end{equation*}
The space $\mathcal{F}$ is precisely $H^0(\mathbb{C}^n,|z|^2)$, and is thus a closed linear subspace of the space $\mathcal{L}^2(\mathbb{C}^n, |z|^2)$ with inner product given by
\begin{equation*}
\langle f, g \rangle_{\mathcal{F}} := \int_{\mathbb{C}^n}f(z)\overline{g(z)}e^{-|z|^2}dV,
\end{equation*}
and thus is a Hilbert space.  In fact, it is a \textit{reproducing kernel Hilbert space} on $\mathbb{C}^n$, with reproducing kernel
\begin{equation*}
\mathcal{R}_{\bCn}(u,v) := e^{u \cdot \bar{v}}.
\end{equation*}
We first show that this kernel has the reproducing property on $\mathbb{C}$ and then extend this argument to $\mathbb{C}^n$.

\begin{lemma}\label{repkerC} On $\bC$, the Bargmann-Fock kernel is given by
\begin{equation*}
\mathcal{R}_{\bC}(u,v) := e^{u \bar{v}}.
\end{equation*}

\begin{proof} Taking some $f \in H^0(\bCn)$, we consider the inner product against $\mathcal{R}_{\bC}$. We convert the resulting integral to polar coordinates and then apply the Cauchy Integral Formula to obtain
\begin{align*}
\left\langle f(v), \mathcal{R}_{\bC} \right\rangle_{\mathcal{F}} &=  \sqrt{-1}\int_{\bC}{f(v) e^{u \bar{v}- |v|^2} \frac{dv \w d\bar{v}}{2 \pi} }\\
&=  - \frac{1}{\pi} \int_{0}^\infty \int_0^{2 \pi} {f(u + r e^{i \theta}) e^{u (\bar{u} + re^{-i\theta})- |u + re^{i \theta}|^2} \frac{r}{2} d\theta  dr}\\
&= - \frac{1}{\pi} \int_{0}^\infty r e^{-r^2} \int_0^{2 \pi} {f(u + r e^{i \theta}) e^{ - \bar{u} r e^{i \theta} }  d\theta  dr}\\
&=  -  f(u) \int_{0}^\infty{ 2 r e^{-r^2} dr}\\
& = f(u).
\end{align*}
The result follows.
\end{proof}
\end{lemma}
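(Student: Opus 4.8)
The plan is to verify directly that $e^{u\bar v}$ has the reproducing property on $\mathcal{F} = H^0(\bC,|z|^2)$, i.e. that $\langle f(v), e^{u\bar v}\rangle_{\mathcal{F}} = f(u)$ for every $f \in \mathcal{F}$ and every $u \in \bC$; since the reproducing kernel of a Hilbert space of holomorphic functions is unique whenever it exists, this identifies $e^{u\bar v}$ as \emph{the} Bargmann--Fock kernel. First I would check that the inner product even makes sense: for fixed $u$, $v\mapsto e^{\bar u v}$ lies in $\mathcal{F}$ (its squared norm is $e^{|u|^2}$), and for $f\in\mathcal{F}$ the integrand $f(v)\,e^{u\bar v}\,e^{-|v|^2}$ factors as $\big(f(v)e^{-|v|^2/2}\big)\big(e^{u\bar v}e^{-|v|^2/2}\big)$ with both factors in $\mathcal{L}^2(\bC)$ — the second because $|e^{u\bar v}|\le e^{|u|\,|v|}$, which the Gaussian absorbs — so the integrand is in $\mathcal{L}^1(\bC)$ and the manipulations below (Fubini, the polar change of variables) are legitimate.

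The computational heart is a translation to polar coordinates centered at $u$. Writing $v = u + re^{i\theta}$ and expanding, the quadratic exponent collapses, the $|u|^2$ and $u r e^{-i\theta}$ terms cancelling:
\[
u\bar v - |v|^2 = -\bar u\, r e^{i\theta} - r^2 .
\]
Since $dV = \tfrac{1}{\pi}\,r\,dr\,d\theta$ in these coordinates, one obtains
\[
\langle f(v), e^{u\bar v}\rangle_{\mathcal{F}} = \frac{1}{\pi}\int_0^\infty r\,e^{-r^2}\left(\int_0^{2\pi} f(u + re^{i\theta})\,e^{-\bar u\, r e^{i\theta}}\,d\theta\right)dr .
\]
For each fixed $r$, the map $z \mapsto f(u+z)\,e^{-\bar u z}$ is entire, so by the mean value property of holomorphic functions (equivalently, the Cauchy integral formula on the circle $|z| = r$) the inner integral equals $2\pi f(u)$, independent of $r$. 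Hence the whole expression is $2 f(u)\int_0^\infty r\,e^{-r^2}\,dr = f(u)$, using $\int_0^\infty r\,e^{-r^2}\,dr = \tfrac12$.

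The only point that really requires care is the analytic bookkeeping of the first paragraph — establishing $\mathcal{L}^1$-integrability so that Fubini applies and the polar substitution is valid, and checking that the $\theta$-average genuinely returns $f(u)$ uniformly in $r$; both follow from membership in the weighted space $\mathcal{F}$ together with holomorphy, so this is routine. As an alternative route one could avoid the change of variables entirely: show that $\{z^n/\sqrt{n!}\}_{n\geq 0}$ is an orthonormal basis of $\mathcal{F}$ — orthogonality from $\int_0^{2\pi} e^{i(m-n)\theta}\,d\theta = 2\pi\delta_{mn}$, normalization $\|z^n\|_{\mathcal{F}}^2 = n!$ from $\int_0^\infty r^{2n+1}e^{-r^2}\,dr = \tfrac{n!}{2}$, completeness from convergence of Taylor series in the $\mathcal{F}$-norm — whence the reproducing kernel must be $\sum_{n\geq 0}\frac{z^n\bar w^n}{n!} = e^{z\bar w}$. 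The subsequent extension of this computation to $\bCn$, and thence to the integral identity of Lemma~\ref{MainLemma}, is then a routine product argument over the $n$ coordinates.
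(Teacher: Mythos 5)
Your proof is correct and follows essentially the same route as the paper's: translate to polar coordinates centered at $u$ so that the exponent collapses to $-\bar u\,re^{i\theta}-r^2$, apply the mean value property to the entire function $z\mapsto f(u+z)e^{-\bar u z}$ to evaluate the angular integral, and finish with $\int_0^\infty r e^{-r^2}\,dr=\tfrac12$. Your version is in fact slightly more careful about integrability and sign bookkeeping than the paper's, but the substance is identical.
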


\begin{cor}
On $\mathbb{C}^n$, the Bargmann-Fock kernel is given by
\begin{equation*}
\mathcal{R}_{\bCn}(u,v) := e^{u \cdot \bar{v}}.
\end{equation*}

\begin{proof}
Let $u,v \in \mathbb{Z}_+^n$ with $u = (u_1,\ldots,u_n)$ and $v = (v_1, \ldots, v_n)$. Observe that
\begin{equation*}
e^{u \cdot \bar{v}} =\prod_{i=1}^n{e^{u_i\bar{v_i} - |v_i|^2}}.
\end{equation*}
To demonstrate the reproducing property, we consider $f \in H^0(\mathbb{C}^n)$ and decompose the integrand of the resulting inner product agains $\mathcal{R}_{\bCn}$. Applying Lemma \ref{repkerC} to each dimensional component, we have
\begin{align*}
\langle f(v), \mathcal{R}_{\mathbb{C}^n} \rangle_{\mathcal{F}} &= \int_{\bCn}{f(v) e^{u \cdot v - |v|^2} dV} \\
&= \int_{\bCn}f(v_1,\ldots,v_n)\left( \prod_{i=i}^n{e^{u_i\bar{v_i} - |v_i|^2}} \right) dV \\
&= f(u).
\end{align*}
The result follows.
\end{proof}
\end{cor}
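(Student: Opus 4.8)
The plan is to verify directly that the proposed kernel reproduces every element of the Bargmann-Fock space $\mathcal{F}=H^0(\bCn,|z|^2)$, i.e. that
\begin{equation*}
\int_{\bCn} f(v)\, e^{u\cdot\bar v - |v|^2}\, dV = f(u), \qquad u\in\bCn,\ f\in\mathcal{F}.
\end{equation*}
The one structural fact driving everything is that the three ingredients of this integral split as products over the $n$ complex coordinates: the weight $e^{-|v|^2}=\prod_{i=1}^n e^{-|v_i|^2}$, the volume form $dV = \prod_{i=1}^n \tfrac{\sqrt{-1}}{2\pi}\, dv_i\wedge d\bar v_i$, and the kernel $e^{u\cdot\bar v}=\prod_{i=1}^n e^{u_i\bar v_i}$. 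So the $n$-dimensional claim should reduce to $n$ applications of the one-variable identity proved in Lemma \ref{repkerC}.

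First I would record that for each fixed $u$ the function $v\mapsto e^{u\cdot\bar v}$ lies in $\mathcal{F}$, since $\int_{\bCn}|e^{u\cdot\bar v}|^2 e^{-|v|^2}\,dV = e^{|u|^2}\int_{\bCn}e^{-|v-u|^2}\,dV<\infty$; hence the pairing above converges by Cauchy--Schwarz and it remains only to check the reproducing property. Since point evaluation $f\mapsto f(u)$ is bounded on $\mathcal{F}$ and holomorphic polynomials are dense in $\mathcal{F}$, it is enough to treat a monomial $f=v^{\ga}$. For such $f$ the integral factors by Fubini and each one-dimensional factor is evaluated by Lemma \ref{repkerC}:
\begin{equation*}
\int_{\bCn} v^{\ga}\, e^{u\cdot\bar v - |v|^2}\, dV = \prod_{i=1}^n\left(\int_{\bC} v_i^{\ga_i}\, e^{u_i\bar v_i - |v_i|^2}\, \tfrac{\sqrt{-1}}{2\pi}\, dv_i\wedge d\bar v_i\right) = \prod_{i=1}^n u_i^{\ga_i} = u^{\ga}.
\end{equation*}
Extending by linearity to all polynomials and then by density and continuity of evaluation gives the identity for every $f\in\mathcal{F}$, identifying $\mathcal{R}_{\bCn}(u,v)=e^{u\cdot\bar v}$ as the reproducing kernel.

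I do not expect a genuine obstacle here, since the one-variable reproducing property is already in hand; the only steps needing care are routine. Fubini is legitimate because $|f(v)|\,e^{-|v|^2}$ is integrable for $f$ of polynomial growth, and the passage from polynomials to arbitrary $f$ uses only boundedness of point evaluation together with density, both standard for $\mathcal{F}$ (e.g. via the orthonormal basis $\{v^{\ga}/\sqrt{\ga!}\}$). Alternatively, the density step can be bypassed entirely by applying Lemma \ref{repkerC} to a general $f\in\mathcal{F}$ one coordinate at a time: integrating out $v_n$ with $v_1,\dots,v_{n-1}$ held fixed replaces $f(v_1,\dots,v_n)$ by $f(v_1,\dots,v_{n-1},u_n)$, and iterating this $n$ times produces $f(u)$ directly.
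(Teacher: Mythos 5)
Your proof is correct and takes essentially the same route as the paper: factor the kernel, the Gaussian weight, and the volume form over the $n$ coordinates and reduce to the one-variable Lemma \ref{repkerC}. The paper simply iterates the one-variable lemma coordinate by coordinate (exactly the alternative you mention at the end), while your primary write-up adds a reduction to monomials via density and boundedness of point evaluation, which is a harmless extra layer of rigor.
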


The following lemmas demonstrate the Bargmann-Fock projection of monomials of different variables.

\begin{lemma}
Given some multiindex $m \in \mathbb{Z}_+^n$ the following equality holds.
\begin{equation*}
\int_{\bCn}{\bar{v}^{m} e^{u \cdot \bar{v} - |v|^2} dV} = 0.
\end{equation*}

\begin{proof}
By manipulation and an application of Dominated Convergence Theorem,
\begin{align*}
\int_{\bCn}{\bar{v}^{m} e^{u \cdot \bar{v} - |v|^2} dV} &= \int_{\bCn}{ \del_u^{(m)} \left[  e^{u \cdot \bar{v} - |v|^2}  \right] dV} \\
&= \del_u^{(m)}  \left[ \int_{\bCn}{ e^{u \cdot \bar{v} - |v|^2}  dV} \right] \\
&= 0.
\end{align*}
Note that the integral is constant with respect to $u$, hence the derivative vanishes. The result follows.
\end{proof}
\end{lemma}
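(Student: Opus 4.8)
The plan is to recognize the integrand as a $u$-derivative of the Bargmann--Fock kernel, pull the derivative outside the integral, and invoke the reproducing property to see that what remains is constant in $u$.

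First I would record the elementary identity $\del_{u_i} e^{u\cdot\bar v} = \bar v_i\, e^{u\cdot\bar v}$; iterating over a multiindex $m\in\mathbb{Z}_+^n$ gives
\[
\del_u^{(m)}\!\left[e^{u\cdot\bar v - |v|^2}\right] = \bar v^{m}\, e^{u\cdot\bar v - |v|^2},
\]
so the integrand is nothing but a holomorphic derivative in the external parameter $u$.

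Next I would justify interchanging $\del_u^{(m)}$ with $\int_{\bCn}$ via the standard differentiation-under-the-integral-sign theorem: on any compact set of $u$-values, the integrand together with all its $u$-partials up to order $|m|$ is bounded by $C(1+|v|)^{|m|}e^{|u|\,|v| - |v|^2}$, and since $e^{|u|\,|v|-|v|^2}\le e^{|u|^2/2}e^{-|v|^2/2}$ this is integrable on $\bCn$ uniformly for $u$ in that compact set. Hence
\[
\int_{\bCn}\bar v^{m} e^{u\cdot\bar v - |v|^2}\,dV = \del_u^{(m)}\!\left[\int_{\bCn} e^{u\cdot\bar v - |v|^2}\,dV\right].
\]

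Finally I would evaluate the inner integral using the reproducing property of $\mathcal{R}_{\bCn}$ from the preceding corollary: applied to the constant section $f\equiv 1\in\mathcal{F}$ it yields $\int_{\bCn} e^{u\cdot\bar v - |v|^2}\,dV = 1$. Being independent of $u$, this constant is annihilated by $\del_u^{(m)}$ for every nonzero $m$, which is exactly the claimed identity (for $m=0$ the integral equals $1$, so the statement is meant for $|m|\ge 1$). There is essentially no obstacle here; the only mildly delicate point is the interchange of differentiation and integration, and one can sidestep it entirely by factoring the integral as $\prod_{i=1}^n\int_{\bC}\bar v_i^{m_i} e^{u_i\bar v_i - |v_i|^2}(\cdots)$ over one-dimensional volume forms, expanding each $e^{u_i\bar v_i}$ as a power series, and noting that any factor with $m_i\ge 1$ has vanishing angular integral $\int_0^{2\pi} e^{-\sqrt{-1}(m_i+\ell)\theta}\,d\theta=0$ for all $\ell\ge 0$; since $m\ne 0$, at least one such vanishing factor occurs.
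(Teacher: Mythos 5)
Your proposal is correct and follows essentially the same route as the paper: write $\bar v^{m}e^{u\cdot\bar v-|v|^2}$ as $\del_u^{(m)}\bigl[e^{u\cdot\bar v-|v|^2}\bigr]$, interchange differentiation and integration, and observe that the remaining integral is the constant $1$ by the reproducing property, hence annihilated by $\del_u^{(m)}$. Your added justification of the interchange and your remark that the claim only holds for $m\neq 0$ (the paper implicitly assumes this) are both sound refinements, but the argument is the same.
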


\begin{lemma}\label{MainLemma}
The following equality holds, for $p,q \in \mathbb{Z}_+$ with $p \leq q$.
\begin{equation*}
\int_{\bCn}{\bar{v}^{p}v^q e^{u \cdot \bar{v} - |v|^2} dV} =
\begin{cases}
0 & \text{ if } p > q, \\
\frac{q!}{(q-p)!} u^{q-p} & \text{ if } p \leq q.
\end{cases}
\end{equation*}
\end{lemma}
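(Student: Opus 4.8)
The plan is to reduce the computation to the Bargmann--Fock reproducing property already established in Lemma \ref{repkerC} and its corollary, using differentiation under the integral sign to generate the antiholomorphic factor $\oo v^{p}$; this is the same device used in the preceding lemma (the case $q=0$), now carried one step further.

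First I would use the product structure of the integrand. Since $\oo v^{p}v^{q} = \prod_{i=1}^n \oo v_i^{p_i}v_i^{q_i}$, $e^{u\cdot\oo v - |v|^2} = \prod_{i=1}^n e^{u_i\oo v_i - |v_i|^2}$, and $dV = \prod_{i=1}^n \tfrac{\sqrt{-1}}{2\pi}\,dv_i\wedge d\oo v_i$, the integral over $\bCn$ factors as a product of $n$ one-variable integrals of the same form, so it suffices to treat $n=1$ with $p,q\in\mathbb{Z}_+$.

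For $n=1$, I would write $\oo v^{p}e^{u\oo v} = \del_u^{p}\!\left(e^{u\oo v}\right)$ and pull the derivative outside:
\begin{equation*}
\int_{\bC} \oo v^{p}v^{q}e^{u\oo v - |v|^2}\,dV = \del_u^{p}\int_{\bC} v^{q}e^{u\oo v - |v|^2}\,dV.
\end{equation*}
The interchange is the only point requiring (routine) justification: for $u$ ranging in a compact set, the integrand and all its $u$-derivatives are dominated by $C|v|^{p+q}e^{C|v|-|v|^2}$, which is integrable, so dominated convergence applies. Now by Lemma \ref{repkerC} applied to the holomorphic polynomial $f(v)=v^{q}$, one has $\int_{\bC} v^{q}e^{u\oo v - |v|^2}\,dV = u^{q}$.

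Finally I would differentiate: $\del_u^{p}u^{q} = \frac{q!}{(q-p)!}u^{q-p}$ when $p\le q$, and $\del_u^{p}u^{q}=0$ when $p>q$, which are exactly the two cases claimed. (Alternatively one may skip the factorization and argue directly on $\bCn$ with multi-index derivatives $\del_u^{(p)}$, invoking the corollary to Lemma \ref{repkerC} in place of the one-variable statement; then $\del_u^{(p)}u^q=\tfrac{q!}{(q-p)!}u^{q-p}$ for $p\le q$ and $0$ otherwise.) There is no substantive obstacle here; the entire content sits in the reproducing property of $\mathcal F$, which has already been proved.
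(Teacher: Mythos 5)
Your proposal is correct and follows essentially the same route as the paper: write $\oo v^{p}e^{u\cdot\oo v}=\del_u^{(p)}e^{u\cdot\oo v}$, interchange differentiation and integration by dominated convergence, apply the Bargmann--Fock reproducing property to $v^{q}$, and then differentiate $u^{q}$. The coordinate-wise factorization is a harmless extra reduction; the paper argues directly with multi-index derivatives, exactly as in your parenthetical alternative.
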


\begin{proof}
Again by manipulation and an application of Dominated Convergence Theorem,
\begin{align*}
\int_{\bCn}{\bar{v}^{p}v^q e^{u.\bar{v} - |v|^2} dV} &= \int_{\bCn}{ \del_u^{(p)} \left[  v^q e^{u \cdot \bar{v} - |v|^2}  \right] dV} \\
&= \del_u^{(p)}  \left[ \int_{\bCn}{ v^q e^{u \cdot \bar{v} - |v|^2}  dV} \right] \\
&= \del_u^{(p)}  \left[ u^q \right],
\end{align*}
therefore
\begin{equation*}
\int_{\bCn}{\bar{v}^{p}v^q e^{u \cdot \bar{v} - |v|^2} dV} =
\begin{cases}
0 & \text{ if } p > q, \\
\frac{q!}{(q-p)!} u^{q-p} & \text{ if } p \leq q.
\end{cases}
\end{equation*}
Result follows.
\end{proof}

\subsection*{Acknowledgements}

The first author is partially supported by the NSF grant DMS-0969745. The second author is supported by the NSF graduate fellowship DGE-1321846. The second author thanks Jeffrey Streets for his constant support and mentoring. The third and fourth authors thank Zhiqin Lu for his helpful discussions and guidance. The third author would also like to thank Prof. Chin-Yu Hsiao for his friendly discussions and taking time to explain some recent developments in the field.

\end{document}